\newcommand{\R}{\mathbb{R}}
\newcommand{\RR}{\mathcal{R}}
\newcommand{\N}{\mathbb{N}}
\newcommand{\e}{ \epsilon  }
\newcommand{\la}{ \lambda  }
\newcommand{\ds}{ \displaystyle  }
\newcommand{\Om}{\Omega}
\newcommand{\lf}{\left}
\newcommand{\rg}{\right}
\newcommand{\ml}{\mathcal}
\newcommand{\fr}{\partial}
\newcommand{\lap}{\Delta}
\newcommand{\de}{\delta}
\newcommand{\grad}{\nabla}
\newcommand{\ti}{\tilde}
\newcommand{\sm}{\setminus}
\newcommand{\pd}[2]{\frac{\partial #1}{\partial #2}}
\theoremstyle{plain}
\newtheorem{defi}{Definition}[section]
\newtheorem{prop}[defi]{Proposition}
\newtheorem{teo}[defi]{Theorem}
\newtheorem{lema}[defi]{Lemma}
\newtheorem{claim}{Claim}
\theoremstyle{definition}
\theoremstyle{remark}
\numberwithin{equation}{section}
\begin{document}
	%\linenumbers
	
	\title[Sign-changing solutions]{Sign-changing solutions for the sinh--Poisson equation with Robin Boundary condition}
	
	\author[P. Figueroa]{Pablo Figueroa}

	\author[L. Iturriaga]{Leonelo Iturriaga}
	
	\author[E. Topp]{Erwin Topp}
	
	\address{P. Figueroa\hfill\break\indent
		Instituto de Ciencias F\'{\i}sicas y Matem\'aticas, Facultad de Ciencias, Universidad Austral de Chile, Campus Isla Teja,
		Valdivia, Chile.} \email{{\tt pablo.figueroa@uach.cl}}
	
	\address{L. Iturriaga\hfill\break\indent
		Departamento de Matem\'{a}tica, Universidad T\'ecnica Federico Santa Mar\'{\i}a
		\hfill\break\indent  Casilla V-110, Avda. Espa\~na, 1680 --
		Valpara\'{\i}so, Chile.}
	\email{{\tt leonelo.iturriaga@usm.cl}}
	
	\address{E. Topp\hfill\break\indent
		Departamento de Matem\'{a}tica y C.C., Universidad de Santiago de Chile, Casilla 307, Santiago, Chile.}
	\email{{\tt erwin.topp@usach.cl}}
	\date{\today}
	
	\begin{abstract}
Given $\epsilon \in (0,1)$ and $\la > 1$, we address the existence of solutions for the Sinh-Poisson equation with Robin boundary value condition 
			$$
			\begin{cases}
				\Delta u+\epsilon^2 (e^{u} - e^{-u})=0 &\mbox{in }\Omega\\ \frac{\partial u}{\partial\nu}+\lambda u=0 &\mbox{on }\partial\Omega,
			\end{cases}
			$$
			where $\Omega\subset\R^2$ is a bounded smooth domain. We prove two existence results under a suitable relation between $\epsilon$ small and $\la$ large. When $\Omega$ is symmetric with respect to an axis, we prove the existence of a family of solutions $u_{\epsilon,\lambda}$ concentrating at two points with different spin, both located on the symmetry line and close to the boundary. In the second result, we assume $\Omega$ is not simply connected and we construct sign-changing solutions concentrating at points located close to the boundary, each of them on a different connected component of the boundary.
	\end{abstract}
	
	\subjclass[2020]{35J25, 35B25, 35B38}%35J60 35B44

	\keywords{Concentrating solutions, sinh-Poisson equation, Robin boundary condition, Lyapunov-Schmidt reduction}
	\maketitle

	\section{Introduction}

	Let $\Omega \subset \R^2$ a bounded domain with smooth boundary.
	In this paper we are interested in study of the singular perturbation problem
	\begin{equation}\label{eq}
		\left \{ \begin{array}{rll} \Delta u + \epsilon^2 (e^u - e^{-u}) & = 0 \quad & \mbox{in} \ \Omega, \\ 
			\RR_\lambda u := \ds \pd{u}{\nu} + \lambda u & = 0 \quad & \mbox{on} \ \partial \Omega,     
		\end{array} \right .
	\end{equation}
	where $\nu(x)$ denotes the exterior unit normal on $x \in \partial \Omega$, and $\e, \lambda$ are positive parameters. Our main concern is the construction of sign-changing solutions when $\epsilon$ is small, and simultaneously, $\lambda$ is large. 
	
	%This type of problems have been the subject of investigation of a large number of experts in the theory of elliptic PDEs, and its prolific research activity has provided new tools that allows us to describe more specific qualitative properties of the solutions.
	
	Robin boundary condition (also known as boundary condition of the third type) can be seen as a combination of Neumann and Dirichlet boundary condition, in a proportion cast by $\la$. In \cite{BW}, Berestycki and Wei study concentration phenomena for the least energy solution of equations of Ni-Takagi type with Robin boundary condition. In fact, they prove the existence of $\la^*$ such that,  for $\lambda \geq \la^*$, the problem resembles the behavior of the Dirichlet problem studied (c.f.~\cite{nw}), meanwhile for $\lambda < \la^*$ problem is closer to the one with Neumann boundary condition (c.f.~\cite{nt}). In terms of applications, Robin boundary condition is considered in biological models~\cite{DMO}, and thermal conductivity~\cite{clarte}.
	
	If we formally take the limit $\lambda \to \infty$ in~\eqref{eq}, the problem becomes
	\begin{eqnarray}\label{eqli}
		\left\{ \arraycolsep=1.5pt
		\begin{array}{rll}
			\Delta u+\epsilon^2 (e^u - e^{-u}) & =0,\ \ \quad &
			{\rm in}\ \Omega, \\[2mm]
			u & =0,\ \ \quad & {\rm on}\
			\partial\Omega.
		\end{array}
		\right.
	\end{eqnarray}
	
	This equation is tipically referred as \textsl{$\sinh$-Poisson equation} with Dirichlet boundary condition, and it has connection with the description of two-dimensional turbulent Euler flows, see Onsager \cite{onsager, jomo,mojo, chorin,mapu} for a physical discussion of this problem.
	Roughly speaking, the location of vortices in the flow can be described through concentration points of the solution to~\eqref{eqli}. 
	
	In this respect, Bartolucci and Pistoia~\cite{barpis} prove that for every $m \in \N$, there exists a solution to problem~\eqref{eqli} that concentrates around stable critical points of the Hamiltonian associated to the Dirichlet problem in $\Omega$, given by
	\begin{equation}\label{F}
		\varphi_m(\xi_1, ..., \xi_m) = \sum_{i = 1}^m H(\xi_i, \xi_i) + \sum_{i \neq j} a_i a_j G(\xi_i, \xi_j), \quad (\xi_1, ..., \xi_k) \in \Omega^k,
	\end{equation}
	as $\epsilon \to 0$. Here, $a_i \in \{ -1, 1 \}$ for all $i=1,...,m$ determine the spin configuration of the concentrating points; $G$ is the Green function with Dirichlet boundary condition
	\begin{eqnarray*}
		\left\{\arraycolsep=1.5pt
		\begin{array}{rll}
			-\Delta_x G(x,y)&=8\pi \delta_y(x)\ \ \ \ \quad &
			\mbox{in}\ \Omega, \\[2mm]
			G(x,y)&=0\ \ \quad
			&  \mbox{on}\ \partial\Omega,
		\end{array}
		\right.
	\end{eqnarray*}
	and $H$ is the regular part of the Green function, also referred as the associated Robin function. Moreover, it is proven in~\cite{barpis} that
	$$
	u^\epsilon(x) \to 8\pi\sum_{i=1}^k a_i G(x,\xi_i) \quad \mbox{in} \ C^{1,\sigma}(\bar \Omega \setminus \{ \xi_1,..., \xi_m \} ), \ \mbox{as} \ \epsilon \to 0,
	$$
	and the solution concentrates each $\xi_i$ with a sign determined by $a_i$. Concentration points are away to the boundary in consonance with the Dirichlet condition.
	
	Positive concentrating solutions, concentrating around critical points of a Hamiltonian function draws back to the early nineties in the seminal work of Nagasaki and Suzuki~\cite{ns} for the Liouville equation (namely, when $e^u - e^{-u}$ is replaced by $e^u$ in~\eqref{eqli}) with Dirichlet boundary condition, and it is shown the effect of the domain determines the existence of concentration, and also the location of the blow-up points. In the case of the $\sinh$-Poisson equation, negative concentration points are allowed . In~\cite{barpis}, the authors provide the existence of solutions with two concentration points with different sign, provided $\Omega$ is axially symmetric. This result was later extended by Bartsch, Pistoia and Weth
	in~\cite{BPW}, where the authors prove the existence of solutions with an arbitrary number of concentrating points, located in the symmetry axis and alternating sign. For general domains, they prove the result for $3$ and $4$ concentrating points, under suitable configuration of the spins. We also would like to mention the contributions in~\cite{deng1,dmusso} concerning construction of solutions to equations with nonlinearities of exponential type in dimension two, in~\cite{ef,f} for compact Riemann surfaces, and in~\cite{delatorre} for fractional equations.

	Concerning Robin boundary condition, in~\cite{DT} the authors address the Liouville equation with Robin boundary condition
	\begin{equation}
		\left \{ \begin{array}{rll} \Delta u + \epsilon^2 e^u & = 0 \quad & \mbox{in} \Omega, \\ \RR_\lambda u & = 0 \quad & \mbox{on} \ \partial \Omega, \end{array} \right .
	\end{equation}
	and prove the existence of positive solutions with arbitrary number of concentration points, located at critical points of the Hamiltonian function
	\begin{equation}\label{Hamiltoniana1}
		\varphi_m(\xi)=\sum_{i=1}^m H_\la(\xi_i,\xi_i) + \sum_{i\ne j}^m G_\la(\xi_i,\xi_j), \quad \xi = (\xi_1,...,m) \in \Omega^m,
	\end{equation}
	where, this time, $G_\lambda$ is the Green function
	\begin{equation}\label{Green}
		\left \{ \begin{array}{rll} -\Delta G_\lambda(x,y) & = 8 \pi \delta_y(x) \quad &  x \in \Omega \\ 
			\RR_\lambda (G_\lambda(\cdot,y)) & = 0 \quad & \mbox{on} \ \partial \Omega,          
		\end{array} \right .
	\end{equation}
	and in this case, $H_\lambda$ is the associated Robin function, defined as
	\begin{equation}\label{parteregular}
		H_\lambda(x,y) = G_\lambda(x,y) - \Gamma(x - y), \quad x, y \in \Omega, %\mbox{where} = G_\lambda(x,y) + 4 \log |x - y|,
	\end{equation}
	where $\Gamma(x-y) = -4 \log |x - y|$ is the fundamental solution for the Laplacian in the plane. The analysis in~\cite{DT} (see also~\cite{CDF,ZS}) uses in a crucial way the asymptotic behavior $H_\lambda$ when $\lambda$ is large. Accurate asymptotics can be found in D\'avila, Kowalczyk and Montenegro in~\cite{DKM}, showing that the behavior of the map $x \mapsto H_\lambda(x,x)$ in the normal direction to the boundary develops a strict minima at distance of order $O(\lambda^{-1})$ to $\partial \Omega$. Thus, critical points for $\varphi_m$ in~\eqref{Hamiltoniana1} can be found for tuples $(\xi_1,...,\xi_m)$ such that each $\xi_i$ is sufficiently close to the boundary, and away each other. In fact, two types of different solutions can be found, one associated to minima of $\varphi_m$, and the other associated to a critical point of linking-type. %Single point location near non-degenerate critical points of the curvature of the boundary are also at disposal. 
	
\medskip
	
Our approach follows several of the concepts discussed above. In the first main result, we rely on axial symmetry just as in~\cite{barpis, BPW} to conclude the existence of sign-changing, two-point concentrating solution in a certain regime of $\epsilon$ small and $\lambda$ large. 
	\begin{teo}\label{simetria}
		Assume that $\Omega \subset \R^2$ is bounded domain with smooth boundary, and symmetric with respect to the axis $x$. Denote $(a,b) = \Omega \cap \R \times \{ 0 \}$. Then, for each $\alpha > 16$, there exist $\la_0>1, \e_0 \in (0,1)$ such that, for every $\la\geq\la_0$ and $\e$ satisfying $\e \la^{\alpha} \leq \epsilon_0$, problem \eqref{eq} has a solution $u_{\e,\la}$ that concentrates with different sign at two points $\xi_i = (t_i, 0), i=1,2$, with 
		$$
		|t_1 - a|, |t_2 - b| = O(\lambda^{-1}).
		$$
	\end{teo}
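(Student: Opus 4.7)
The plan is a Lyapunov--Schmidt reduction combining the sign-changing ansatz of \cite{barpis,BPW} for the Dirichlet sinh--Poisson equation with the Robin boundary framework of \cite{DT} for the Liouville case. As initial ansatz I take $W = a_1 PU_{\delta_1,\xi_1} + a_2 PU_{\delta_2,\xi_2}$, where
\begin{equation*}
U_{\delta,\xi}(x) = \log \frac{8 \delta^2}{(\delta^2 \e^2 + |x - \xi|^2)^2}
\end{equation*}
is the standard Liouville bubble (satisfying $-\Delta U = \e^2 e^U$ on $\R^2$), $a_1 = 1$, $a_2 = -1$ encodes the opposite-spin configuration, the scales $\delta_i$ are gauge-fixed so that the ansatz matches the expected asymptotic profile, and $P$ denotes the projection onto $\RR_\la u = 0$ on $\partial \Om$. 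As in \cite{DT}, $PU_{\delta,\xi}$ is $U_{\delta,\xi}$ plus a boundary correction driven by the Robin Green function $G_\la$ and its regular part $H_\la$ from \eqref{Green}--\eqref{parteregular}. The concentration points are sought in the form $\xi_i = (t_i, 0)$ with $t_1$ near $a$ and $t_2$ near $b$, at distances $O(\la^{-1})$ from the boundary prescribed by the D\'avila--Kowalczyk--Montenegro asymptotics \cite{DKM}.

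Writing $u = W + \phi$ and linearising, the linearised operator inherits the nondegeneracy of the linearised Liouville problem, with kernel spanned by the translation and scaling modes $Z^j_{\delta_i, \xi_i}$, $j = 0, 1, 2$. I would work in weighted $L^\infty$ or $L^p$ norms to establish invertibility modulo this approximate kernel, with operator norm polynomially bounded in $\la$. The remainder $\phi$ orthogonal to the approximate kernel is then produced by a contraction mapping argument, whose smallness is driven by the error $\Delta W + \e^2 \sinh W$; this is exactly where the regime $\e \la^\alpha \leq \e_0$ enters, ensuring the error is negligible in the weighted norm. Axial symmetry of $\Om$, preserved by the sign-changing configuration with $\xi_i$ on the symmetry axis, can be imposed on $\phi$ as well, so that the remainder inherits the symmetry automatically.

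The problem then reduces to finding critical points of a functional $F_{\e,\la}(\xi_1, \xi_2)$ with leading-order expansion
\begin{equation*}
F_{\e,\la}(\xi_1,\xi_2) = c_0 + c_1 \Bigl[\, H_\la(\xi_1,\xi_1) + H_\la(\xi_2,\xi_2) - 2 G_\la(\xi_1, \xi_2) \,\Bigr] + R_{\e,\la}(\xi_1,\xi_2),
\end{equation*}
the $-2$ in front of the interaction term reflecting $a_1 a_2 = -1$, and with $R_{\e,\la} \to 0$ in $C^1$ uniformly on admissible configurations. Restricting to $\xi_i = (t_i, 0)$, the problem becomes two-dimensional. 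By the DKM asymptotics, the diagonal map $t \mapsto H_\la((t,0),(t,0))$ develops a strict local minimum at distance $O(\la^{-1})$ from each endpoint $a, b$, whereas the interaction $G_\la((t_1,0),(t_2,0))$ stays uniformly bounded and subleading because $|t_1 - t_2|$ is bounded below. Hence the leading term exhibits a $C^0$-stable min--min critical point near $(a + O(\la^{-1}), b - O(\la^{-1}))$, which persists under the $C^1$-small perturbation $R_{\e,\la}$ and supplies the required $\xi_1, \xi_2$.

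The main obstacle is the quantitative control of all rates as $\la \to \infty$. Since $H_\la$ blows up logarithmically at the boundary and its derivatives grow polynomially in $\la$, the projection $PU$, the error $\Delta W + \e^2 \sinh W$, and the inverse of the linearisation must each be tracked with sharp $\la$-dependence; the precise accumulation of these powers is exactly what dictates the compatibility $\e \la^\alpha \leq \e_0$ with $\alpha > 16$. A related delicate point is ensuring that the $C^1$ reduction is accurate enough to preserve the min--min critical point of the leading Hamiltonian: the minimum itself sits in a boundary layer of width $O(\la^{-1})$, so $C^1$ stability requires estimating the remainder and its gradient against inverse powers of this layer width.
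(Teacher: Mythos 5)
Your proposal follows essentially the same Lyapunov--Schmidt strategy as the paper: project Liouville bubbles to satisfy the Robin boundary condition, invert the linearised operator modulo the approximate kernel in weighted norms with $\la$-polynomial control, and locate a $C^0$-stable min--min of the reduced Hamiltonian $\varphi_2$ on the symmetry axis by combining the D\'avila--Kowalczyk--Montenegro expansion of $H_\la$ with the uniform boundedness of $G_\la$ for separated points. The only cosmetic difference is that you impose the axial symmetry of the remainder $\phi$ directly, whereas the paper deduces it from uniqueness of the fixed point; both routes yield the evenness of $F_{\e,\la}$ needed to promote a critical point on the axis to a full critical point.
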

	
	\medskip
	
	Here, by \textsl{concentration at a point} $\xi \in \Omega$ we mean that for all $\delta \in (0,1)$, $\sup_{B_\delta(\xi)} |u_{\e, \la}| \to +\infty$ as $\epsilon \to 0, \la \to \infty$. Notice that by the invariance of the problem, if $u_{\epsilon, \la}$ is solution, $-u_{\epsilon, \la}$ is also a solution.

	\medskip
	
		 The proof of the previous theorem rely on Lyapunov-Schmidt reduction, and most of the technical arguments are performed over the equation~\eqref{L}, equivalent to~\eqref{eq}. Our approach in the construction of approximate solution are rather close to those present in \cite{DT,dkm,dmusso}. We use a two-parameter family of entire solutions of the Liouville equation $\R^2$ as the first ansatz, in junction with smooth correctors that allows us to satisfy the boundary condition. The reduced problem corresponds to that of adjusting variationally the location of the concentrating points, as critical points of an energy functional associated to the weak formulation of the problem. In a saturation regime of the parameters, that functional is close to the Hamiltonian $\varphi_m$, which is given by
	\begin{equation}\label{Hamiltoniana2}
		\varphi_m(\xi)=\sum_{i=1}^m H_\la(\xi_i,\xi_i) + \sum_{i\ne j}^m a_i a_jG_\la(\xi_i,\xi_j), \quad \xi = (\xi_1,...,m) \in \Omega^m,
	\end{equation}
	with $a_i \in \{ -1,1 \}$ for $i=1,..,m$ ($m=2$ in the setting of Theorem~\ref{simetria}). Notice that in contrast with~\eqref{Hamiltoniana1}, this time the contribution of the Green function maybe unbounded from below for tuples $(\xi_1, \xi_2) \in \Omega \times \Omega$ close to the diagonal if $a_1 a_2 = -1$, and this makes more difficult to locate concentrating points as minima. The relation among $\epsilon$ and $\lambda$ in the theorem allows us to control the error terms to get the criticality from the minima of the Robin function showed in~\cite{DKM}, which occurs close to boundary. In particular, the method leads to a relation among $\epsilon$ and $\lambda$ involving logarithmic corrections that explains the corresponding hypothesis in Theorem~\ref{simetria}, see Proposition~\eqref{fju}. For simplicity, we state our main result in a simpler form, at the expense of a non sharp estimate.

	Concentration near the boundary is in big contrast with previous results regarding interior concentration, see for instance~\cite{ns, bm, ls, mw, bp, emp, dkm}. We believe that by methods similar to the ones presented here, solutions with interior concentration can be constructed for~\eqref{eq}, since we expect to have $H_\lambda(\cdot, \xi) \to H(\cdot, \xi)$ as $\lambda \to \infty$, locally uniform in $\Omega$, where $H$ is the regular part of the Green function with Dirichlet boundary condition. These solution are associated to maxima for the Hamiltonian function. We do not pursue in this direction and focus on concentration near the boundary.
	
	%In fact, the asymptotic of the Robin problem when $\lambda$ is large discussed above have a quantitive description in Berestycki and Wei~\cite{BW}, where the authors investigate the behavior of solutions for subcritical, singular perturbation problems with the form
	%$$
	%\epsilon  \Delta u - u + u^p= 0 \quad \mbox{in} \ \Omega.
	%$$
	%
	%It is shown in~\cite{BW} the existence of $\lambda^*> 0$ such that, for $\la \geq \la^*$, the solution behaves like the one corresponding to the Dirichlet boundary condition (c.f. Ni and Wei~\cite{nw}), meanwhile for $\la < \la^*$, then the problem is closer to the Neumann one (c.f. Ni and Takagi~\cite{nt}).

	In our second main result, we are able to replace the symmetry assumption of the previous theorem by a topological one. 
	\begin{teo}\label{holes}
		Assume that $\Omega$ is not simply connected, and let $\Gamma_1, .., \Gamma_n$, $n \in \N$, be the connected components of $\partial \Omega$, and let $k \in \{1,...,m\}$. Then, for each $\alpha > 16$ there exist $\la_0>1$ and $\e_0 \in (0,1)$ such that for each $\la\geq\la_0$ and each $\e$ satisfying $\e \la^{\alpha}\leq \epsilon_0$, problem \eqref{eq} has a sign-changing solution $u_{\e,\la}$ that concentrates at points $\xi_1,...,\xi_k \in \Omega$, with $\mathrm{dist}(\xi_i, \Gamma_i) = O(\lambda^{-1})$ for all $i=1,...,k$, and where $\Gamma_i \neq \Gamma_j$ if $i \neq j$.
	\end{teo}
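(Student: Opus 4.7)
\textbf{Proof plan for Theorem~\ref{holes}.}

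The plan is to run the same Lyapunov--Schmidt scheme that drives Theorem~\ref{simetria} and to replace the axial--symmetry input by a topological one coming from the disjoint components of $\partial\Omega$. One sets up the ansatz $U_\xi = \sum_{i=1}^k a_i w_{\xi_i,\mu_i} + \Psi_{\xi,\lambda}$ formed by $k$ Liouville bubbles with spins $a_i\in\{-1,1\}$ and concentration parameters $\mu_i(\epsilon,\lambda,\xi_i)$, corrected by a smooth $\Psi_{\xi,\lambda}$ that enforces the Robin boundary condition, and then solves the auxiliary linear problem modulo a finite--dimensional kernel. This produces a genuine solution $u_\xi=U_\xi+\phi_\xi$ of~\eqref{eq} whenever $\xi=(\xi_1,\dots,\xi_k)$ is a critical point of a reduced functional $J_{\epsilon,\lambda}$ which, in the regime $\epsilon\lambda^\alpha\le\epsilon_0$ with $\alpha>16$, admits an expansion of the form
\[
J_{\epsilon,\lambda}(\xi) \;=\; c_0(\epsilon,\lambda) \,+\, c_1(\epsilon,\lambda)\,\varphi_k(\xi) \,+\, o\bigl(c_1(\epsilon,\lambda)\bigr),
\]
uniformly on $k$--tuples with mutual distances bounded below, where $\varphi_k$ is the Hamiltonian~\eqref{Hamiltoniana2} associated to the chosen spin $a=(a_1,\dots,a_k)$.

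To force sign change, fix a spin configuration $a\in\{-1,1\}^k$ containing both $+1$ and $-1$. Because $\Omega$ is not simply connected and $k\le n$, we can pick $k$ pairwise distinct components $\Gamma_{j_1},\dots,\Gamma_{j_k}$ of $\partial\Omega$; this is the only place the topological hypothesis is used. For a small fixed $\delta>0$ denote by $\mathcal{N}_i\subset\Omega$ the tubular neighborhood of $\Gamma_{j_i}$ of width $\delta$, chosen so that $\overline{\mathcal{N}_i}\cap\overline{\mathcal{N}_l}=\emptyset$ for $i\ne l$. Given a small parameter $\rho>0$ we look for a minimizer of $\varphi_k$ on the compact admissible set
\[
\overline{\mathcal{A}_\rho} \;=\; \prod_{i=1}^k \bigl\{\xi_i\in\overline{\mathcal{N}_i} \,:\, \mathrm{dist}(\xi_i,\Gamma_{j_i})\ge \rho\lambda^{-1}\bigr\}.
\]

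The disjoint--component choice pays off now: every pair $\xi_i,\xi_l$ with $i\ne l$ lives in disjoint $\overline{\mathcal{N}_i},\overline{\mathcal{N}_l}$, so there exists $d_0=d_0(\Omega,\delta)>0$, independent of $\lambda$, with $|\xi_i-\xi_l|\ge d_0$ for every admissible $\xi$. Consequently every cross term $a_ia_l G_\lambda(\xi_i,\xi_l)$ in $\varphi_k$ is uniformly bounded on $\overline{\mathcal{A}_\rho}$ regardless of the sign of $a_ia_l$, which precisely bypasses the diagonal singularity of $G_\lambda$ flagged after~\eqref{Hamiltoniana2} for mixed spins on general domains. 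Thus on $\overline{\mathcal{A}_\rho}$ the Hamiltonian $\varphi_k$ coincides with the decoupled sum $\sum_{i=1}^k H_\lambda(\xi_i,\xi_i)$ up to uniformly bounded error. By the normal--direction asymptotics of D\'avila--Kowalczyk--Montenegro~\cite{DKM}, each $\xi_i\mapsto H_\lambda(\xi_i,\xi_i)$ on $\mathcal{N}_i$ attains a strict minimum at distance $O(\lambda^{-1})$ from $\Gamma_{j_i}$, blowing up as $\xi_i\to\Gamma_{j_i}$ and growing further into the interior; choosing $\rho$ small enough this minimum sits in the interior of the $i$-th factor of $\mathcal{A}_\rho$, and taking products yields an interior minimizer $\xi^*$ of $\varphi_k$ on $\overline{\mathcal{A}_\rho}$ with $\mathrm{dist}(\xi_i^*,\Gamma_{j_i})=O(\lambda^{-1})$ for every $i$.

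The main obstacle is to lift this interior minimum from $\varphi_k$ to a critical point of the reduced functional $J_{\epsilon,\lambda}$. This requires uniform $C^0$ control of the remainder $J_{\epsilon,\lambda}-c_0-c_1\varphi_k$ on $\overline{\mathcal{A}_\rho}$, together with a quantitative energy gap between the interior minimum value of $c_1\varphi_k$ and its values on $\partial\mathcal{A}_\rho$ that dominates the remainder. This is the quantitative content of the estimate behind Proposition~\ref{fju} in the setting of Theorem~\ref{simetria}, and it is where the threshold $\alpha>16$ in $\epsilon\lambda^\alpha\le\epsilon_0$ arises, balancing the logarithmic corrections in the Liouville bubble, the decay of the scale $c_1(\epsilon,\lambda)$ and the size of the corrector $\Psi_{\xi,\lambda}$ near the boundary. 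Once these estimates are in place, a standard compactness argument produces a minimizer of $J_{\epsilon,\lambda}$ in the interior of $\mathcal{A}_\rho$, hence a genuine critical point; the corresponding $u_\xi=U_\xi+\phi_\xi$ then solves~\eqref{eq} and, by the mixed choice of spins, is sign--changing with the prescribed concentration profile.
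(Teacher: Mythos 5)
Your plan matches the paper's: both exploit the disjointness of the boundary components to bound $|\xi_i-\xi_j|$ from below (so the mixed-sign cross terms $a_i a_j G_\lambda(\xi_i,\xi_j)$ stay uniformly controlled), reduce $\varphi_k$ to an essentially decoupled sum of Robin functions whose minimum near each $\Gamma_{j_i}$ comes from the D\'avila--Kowalczyk--Montenegro expansion, and lift the stable interior minimum to a critical point of the reduced functional via Proposition~\ref{fju}. The only cosmetic difference is your admissible set (a product of tubular annuli of fixed outer radius $\delta$) versus the paper's $\Omega_K$ with $\lambda d(\xi_i)\in(K^{-1},K)$; at your outer boundary $d(\xi_i)=\delta$ you would invoke the uniform bound on $H_\lambda$ from Lemma~\ref{gest} rather than the DKM expansion, but the conclusion is unchanged.
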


	This theorem follows the same strategy of the previous result, where as before the concentration points are also minima to the Hamiltonian function. In this case, the uniform distance among connected components of the boundary of the domain allows us to control the contribution of the Green function in $\varphi_m$, no matter the sign of the interacting points is. In fact, in the notation of the theorem, we have $\sum_{k=1}^{n} 2^k \binom{n}{k}$ different solutions (including the ones concentrating at one point, and the solutions due to the invariance of the equation).

	It is well-known that the use of index/linking arguments have been successful for the study of criticality of smooth functionals, and it has been employed in problems similar to ours, see for example~\cite{dkm, bp, DT} and its references. It would be interesting to know if solutions of saddle-point type could be obtained for this problem, but we did not pursue in this direction.

	\medskip
	
	%\cite{dkm}, they have showed that: {\it If the domain $\Omega$ is not simply connected}, and given any integer $k \geq 1$,  there exist $k$ points $\xi_1 , \ldots , \xi_k$ in $\Omega$ and  a family of solutions $u_\epsilon $, for any $\epsilon$ sufficiently small, which blows-up at these $k$ points in the sense that, when $\epsilon \to 0$ it follows that
	%$$
	%\sup_{x \in \Omega \setminus \cup_{j=1}^k B(\xi_j , \delta )} u_\epsilon (x) \to 0, \quad  {\mbox {and for any}} \quad j=1, \ldots , k, \quad
	%\sup_{x \in B(\xi_j , \delta )} u_\epsilon (x) \to +\infty
	%$$
	%for any positive fixed number $\delta$. Furthermore,
	%$$
	%\int_\Omega \epsilon^2 e^{u_\epsilon}dx\to 8k\pi\quad \mbox{as}\ \epsilon\to0.
	%$$
	%

	%The location of these blow-up points $\xi_1 , \ldots , \xi_k$ must be a critical point of the function
	%\begin{eqnarray*}
	%\varphi_\infty(\xi_1,\ldots,\xi_k)=-\sum\limits_{j=1}^kH_\infty(\xi_j,\xi_j)-\sum\limits_{l\neq j}^kG_\infty(\xi_l,\xi_j),
	%\end{eqnarray*}
	%

	%We start describing what is known about the $\sinh$-Poisson equation with Dirichlet boundary condition. 
	
	\medskip
	
	This paper is organized as follows. In Section \ref{sec2}, describing a first approximation solution to problem \eqref{eq} and estimating the error. %We describe the proof of the main results in Section \ref{finite}. 
	Section \ref{finite} is
	devoted to perform the finite dimensional reduction. In Section \ref{nonlinear} we study the associated nonlinear problem. Section \ref{energyex} contains the asymptotic expansion of
	the reduced energy.  Finally, in Section \ref{secmain} we will prove our main results. %Finally, We will give some estimates in the Appendix, Section \ref{appen}. The rest of this paper will be devoted to the proofs of Theorems.

	\section{Preliminaries and ansatz for solutions}\label{sec2}
	
	In this section, we denote $d(x) = \mathrm{dist}(x, \partial \Omega)$ for each $x \in \Omega$.
	
	\subsection{Preliminaries about Green and Robin function.} We start providing some estimates for the Green function with Robin boundary condition $G_\la$ in~\eqref{Green}, and its regular part $H_\la$ in~\eqref{parteregular}. 
	\begin{lema}\label{gest}
		Let $\xi\in\Om$.
		\begin{itemize}
			\item For each $\delta > 0$ small, there exists $C$ depending on $\delta$ and $\Omega$ such that if $d(\xi) \geq \delta$ we have
			$$
			\| H_\la(\cdot, \xi) \|_\infty \leq C.
			$$
			
			\item There exists $\delta \in (0,1)$ small and $C_\Omega > 0$ large depending on the smoothness of $\Omega$, such that, for all $\la$ large in terms of $\delta$, if  $(\la \log \la)^{-1} \leq d(\xi) \leq \delta$, then 
			\begin{equation}\label{Hxi}
				-C_\Omega + 2\log |x - \xi^*|\leq H_\la(x,\xi) \le C_\Omega,
			\end{equation}
			where $\xi^* \in \Omega^c$ is the reflection of $\xi$ with respect to the tangent to $\partial \Omega$ supported at the projection of $\xi$.

			\item In particular, for any $0<\delta$ there exists $C_\delta > 0$ such that 
			\begin{equation}\label{Gfxi}
				\|G_\la(\cdot,\xi)\|_{L^\infty\lf(\Om\setminus \overline{B_\delta(\xi)} \rg)}\le C_\delta.
			\end{equation}
			
			%where $B_\delta(\xi)$ is the ball centered at $\xi$ with radius $\delta>0$. 
		\end{itemize}
	\end{lema}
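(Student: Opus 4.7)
The backbone is the Robin maximum principle: if $w\in C^2(\Om)\cap C^1(\bar\Om)$ satisfies $\Delta w=0$ in $\Om$ and $\RR_\la w=h$ on $\partial\Om$, then $\|w\|_\infty\leq \|h\|_\infty/\la$. Indeed, at a boundary extremum $x_0$ one has $\partial_\nu w(x_0)\cdot\mathrm{sgn}(w(x_0))\geq 0$, so $\la|w(x_0)|\leq|h(x_0)|$, while interior extrema are excluded by harmonicity. Since $H_\la(\cdot,\xi)$ is harmonic in $\Om$ with $\RR_\la H_\la(\cdot,\xi)=-\RR_\la\Gamma(\cdot-\xi)$ on $\partial\Om$, for $d(\xi)\geq\delta$ the boundary data is bounded on $\partial\Om$ by $C(\delta,\Om)(1+\la)$, and the above principle yields the first bullet. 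The third bullet follows from $G_\la=\Gamma+H_\la$ together with $|\Gamma(x-\xi)|\leq C_\delta$ on $\Om\setminus B_\delta(\xi)$ and the $L^\infty$ bound on $H_\la$, given by the first bullet when $d(\xi)\geq\delta$ and by the second bullet when $d(\xi)<\delta$.

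\textbf{Second bullet.} This is the substantive step, since the Robin maximum principle alone only gives $\|H_\la\|_\infty=O(\log\la)$ in this regime (on $\partial\Om$ one has $|\partial_\nu\Gamma(\cdot-\xi)|\lesssim 1/d(\xi)$ and $|\Gamma(\cdot-\xi)|\lesssim|\log d(\xi)|$, yielding after division by $\la$ only the logarithmic bound). My plan is to compare $H_\la(\cdot,\xi)$ with the corresponding Robin function $\widetilde H_\la$ of the half-plane $\Pi$ tangent to $\partial\Om$ at the projection $\pi(\xi)$. The Green function of $\Pi$ admits an explicit representation: splitting off the Dirichlet reflection $-4\log|x-\xi^*|$ reduces the problem to solving the residual Robin condition on a half-line which, after Fourier transform in the tangential variable, produces an explicit integral kernel. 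From that formula one extracts an upper bound $\widetilde H_\la\leq C$ and a lower bound $\widetilde H_\la(x,\xi)\geq -C+2\log|x-\xi^*|$, both uniform in the regime $\la d(\xi)\geq(\log\la)^{-1}$. To transfer this to $\Om$, one straightens $\partial\Om$ near $\pi(\xi)$ via a smooth chart $F$, pulls back $\widetilde H_\la$, and controls the discrepancy $H_\la-\widetilde H_\la\circ F^{-1}$ by the maximum principle of the first paragraph; its Robin boundary data is driven by the boundary curvature and is $O(1)$ in $\la$, so the error remains uniformly bounded.

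\textbf{Main obstacle.} The technical heart is the half-plane analysis, in particular the appearance of the coefficient $2$ in front of $\log|x-\xi^*|$ and the sharpness of the threshold $d(\xi)\geq(\la\log\la)^{-1}$: below this threshold the exponential decay of the Robin kernel no longer compensates the singular behaviour generated by the image point. Establishing both items rests on a careful asymptotic evaluation of the explicit integral defining the Robin correction, very much in the spirit of the D\'avila--Kowalczyk--Montenegro analysis~\cite{DKM}. Once that model computation is secured, the upper bound of~\eqref{Hxi} and the globalisation via boundary charts are routine consequences of the Robin maximum principle and the smoothness of $\partial\Om$.
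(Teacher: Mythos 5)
Your first and third bullets are fine and essentially coincide with the paper's treatment (Robin maximum principle giving $\|w\|_\infty\le\la^{-1}\|\RR_\la w\|_\infty$ for harmonic $w$, then $G_\la=\Gamma+H_\la$; for the third bullet note you also need $|x-\xi^*|\ge|x-\xi|-2d(\xi)\ge\delta/2$ away from $B_\delta(\xi)$, since the lower bound in \eqref{Hxi} is not an $L^\infty$ bound). The genuine gap is the second bullet, which is the whole content of the lemma: you do not prove it. You reduce \eqref{Hxi} to two claims, namely (i) the half-plane Robin function $\widetilde H_\la$ satisfies $\widetilde H_\la\le C$ and $\widetilde H_\la\ge -C+2\log|x-\xi^*|$ uniformly down to $d(\xi)\sim(\la\log\la)^{-1}$, and (ii) the discrepancy between $\widetilde H_\la$ (pulled back by a boundary chart) and $H_\la(\cdot,\xi)$ is $O(1)$; and then you explicitly defer (i) ("rests on a careful asymptotic evaluation of the explicit integral") and declare (ii) routine. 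Claim (i) is precisely the quantitative heart of the statement (the coefficient $2$ and the threshold $(\la\log\la)^{-1}$), so what you have is a strategy outline rather than a proof. Claim (ii) is not routine either: for $w=H_\la(\cdot,\xi)-\widetilde H_\la\circ F$ the Robin datum on $\partial\Om$ carries the factor $\la$ multiplying geometric errors (deviation of $\partial\Om$ from its tangent line, the conformal factor of the chart, the cut-off needed because the chart is only local), while near the projection point $|\nabla\widetilde H_\la|$ is of order $\la$ and $|\widetilde H_\la|$ of order $\log\la$; the bound $\|w\|_\infty\le\la^{-1}\|\RR_\la w\|_\infty$ only helps after a genuinely quantitative estimate of these products, which you do not supply.

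For comparison, the paper never passes through the explicit half-plane kernel. It writes the Robin datum of $H_\la(\cdot,\xi)$, which is $-\RR_\la\Gamma(\cdot-\xi)$, i.e.\ (up to a multiplicative constant) $|x-\xi|^{-2}(x-\xi)\cdot\nu(x)+\la\log|x-\xi|$ on $\partial\Om$, splits $\partial\Om$ into the regions $|x-\xi|\le\la^{-1}$ (respectively $\la^{-\theta}$ with $\theta\in(1/2,1)$ for the lower bound), $\la^{-1}\le|x-\xi|\le\tilde\delta$ and $|x-\xi|\ge\tilde\delta$, and shows directly that this datum lies below $\RR_\la(C_\Om)$ and above $\RR_\la(2\log|x-\xi^*|-2\log c_\Om)$, using only the comparability $c_\Om\le|x-\xi|/|x-\xi^*|\le c_\Om^{-1}$ near the projection and the local graph representation $x=(x',\psi(x'))$ of the boundary; the conclusion then follows from the Robin maximum principle with these explicit barriers. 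To make your argument complete you would have to carry out both the half-plane asymptotics and the chart-transfer estimates in full; the direct barrier comparison above bypasses both.
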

	
	\begin{proof}
		Let $\xi \in \Omega$ close to the boundary. 
		Up to rotation and translation, we can assume that the projection of $\xi$ onto the boundary is the origin, and $\nu(0) = -e_2$. We denote $\xi^* = (0, -d(\xi))$. 
		
		We divide the analysis depending on the distance of $\xi$ to the boundary. Let $\tilde{\delta} \in (0,1)$ to be specified later on. 
		
		By definition of $H_\lambda$, for each $x \in \partial \Omega$ we have 
		\begin{align*}
			\RR_\lambda H_\lambda(x) = \RR_\lambda (-\Gamma) = & |x - \xi|^{-2} (x -\xi) \cdot \nu(x) + \lambda \log|x - \xi|.
		\end{align*}

		If $d(\xi) \geq \tilde{\delta}$, then it is easy to see using maximum principle for Robin boundary condition (see Lemma 2.1 in~\cite{DT}) that 
		$$
		\| H_\lambda (\cdot, \xi) \|_\infty \leq C_{\tilde{\delta}}.
		$$
		
		From here, we assume $(\lambda \log \la)^{-1} \leq d(\xi) \leq \tilde{\delta}$.
		
		For $x \in \partial \Omega$ such that $|x - \xi| \geq \tilde{\delta}$ we have 
		$$
		|\RR_\lambda H_\lambda(x)| \leq \la C_\Omega,
		$$
		where $C_\Omega = \log \max \{ 1, \mathrm{diam}(\Omega)\} + \tilde{\delta}^{-1}$.
		
		From here, we concentrate on the points $x$ on the boundary such that $|x - \xi| \leq \tilde{\delta}$. 
		
		We start with upper bounds for $\RR_\la H_\la$. If $|x - \xi| \leq \la^{-1}$, then
		$$
		\RR_\la H_\la(x) \leq \la \log \la + \la \log |x - \xi| \leq 0,
		$$
		meanwhile, for $\la^{-1} \leq |x - \xi| \leq \tilde{\delta}$ we have 
		$$
		\RR_\la H_\la(x) \leq \la + \la \log(\tilde{\delta}) \leq 0,
		$$
		provided $\tilde{\delta} \leq e^{-1}$.
		
		From here, using maximum principle for Robin boundary condition, using $C_\Omega$ as supersolution, we conclude the upper bound in~\eqref{Hxi}.

		\medskip
		
		For the lower bound, we start recalling that for all $|x - \xi| \leq \tilde{\delta}$ on the boundary, we have $|x - \xi|$ and $|x - \xi^*|$ are comparable, in the sense that there exists $c_\Omega \in (0,1)$ such that
		$$
		c_\Omega \leq \frac{|x - \xi|}{|x - \xi^*|} \leq c_\Omega^{-1}, \quad x \in \partial \Omega \cap B(\xi, 3\tilde{\delta}).
		$$
		
		This is a consequence of the smoothness of the domain. 
		
		If $|x - \xi| \geq \la^{-\theta}$ for $\theta \in (1/2, 1)$, we have
		\begin{align*}
			\RR_\la H_\la \geq & -|x - \xi|^{-1} + \la \log|x - \xi|\\
			\geq &  -\la^\theta + \la \log |x - \xi^*| + \la \log c_\Omega \\
			\geq & 2 \la (\log |x - \xi^*| + \log c_\Omega).
		\end{align*}
		
		Tomamos $S = \log |x - \xi^*|$ y 
		\begin{align*}
			\RR_\la S = \widehat{x - \xi^*} \cdot \nu(x) |x - \xi^*|^{-1} + \la \log |x - \xi^*| & \leq c_\Omega^{-\theta} \la^{\theta} + \la \log |x - \xi^*| \\
			& \leq \lambda (c_\Omega^{-\theta} \la^{\theta - 1} + \log |x - \xi^*|) \\
			\leq & \frac{1}{2} \la \log |x - \xi^*|
		\end{align*}
		
		If $|x - \xi| \leq \la^{-\theta}$, we use that $x$ can be writen as $x = (x', \psi(x'))$ with $\psi(0) = 0, \psi'(0) = 0$, $\psi''$ bounded, from which
		\begin{equation*}
			\widehat{(x - \xi)} \cdot \nu(x) = \frac{x'\psi' - \psi + d(\xi)}{\sqrt{1 + (\psi')^2} \sqrt{|x'|^2 + (\psi - d(\xi))^2}} \geq \frac{1}{2},
		\end{equation*}
		for all $\la$ large enough. This implies that
		\begin{equation*}
			\RR_\la H_\la \geq \frac{c_\Omega}{2|x - \xi^*|} + \la \log |x - \xi^*| + \la \log c_\Omega.
		\end{equation*}
		
		Thus, we use the function $S(x) = 2\log |x - \xi^*| - 2\log c_\Omega$, which, in view of the estimates above, satisfies $\RR_\la H_\la \geq \RR_\la S$ on $\partial \Omega$. Hence, we use maximum principle again, to conclude that $H_\la \geq S$ in $\Omega$. This leads to the lower bound in~\eqref{Hxi}. The remaining estimates can be easily obtained from~\eqref{Hxi} and the definition of $H, G$ and $\Gamma$. This completes the proof.  
	\end{proof}

	\subsection{Ansatz for the solution. Reduced problem.}

	In this section we will construct an approximation of the solution to problem \eqref{eq}. Then we estimate the error of such approximation in a suitable norm. The basic idea is to consider a parameter $\mu > 0$ and the functions
	\begin{equation}\label{basicbubble}
		w_\mu(x) = \log\frac{8\mu^2}{(\mu^2 + |x|^2)^2},\quad x\in\R^2
	\end{equation}
	which are solutions to the Liouville equation in the whole plane
	\begin{equation}\label{Liouville}
		\Delta u + e^u = 0 \quad \mbox{in} \ \R^2.
	\end{equation}
	
	%They also satisfy
	%$$\int_{\R^2}e^{w_\mu}dx<+\infty,\quad\text{and}\quad \ e^{w_\mu}\rightharpoonup 8\pi\delta_0\ \text{as}\ \mu\to 0.$$
	%{\color{red} weakly, in the sense of measures, where $\delta_0$ denotes the Dirac mass at $0$.}

	An actual solution will have an asymptotic profile as $\epsilon \to 0$ and $\lambda \to +\infty$ which resembles these solutions, properly translated and rescaled in terms of these parameters. Specifically, we choose our scaling parameter as
	\begin{equation}\label{rho}
		\rho := \frac{\epsilon}{\lambda^2}.
	\end{equation} 
	
	Given $m \in \N$, we consider $\{ \mu_j \}_{j=1}^m \subset (0,+\infty)$,  $\{ \xi_j \}_{j=1}^m \subset \Omega$, and for each $j=1,...,m$, we denote
	\begin{equation}\label{defwj}
		\begin{split}
			w_j(x) & =  w_{\mu_j \rho}(x - \xi_j) + 2\log \frac{1}{\epsilon}  =  \log  \frac{8\mu_j^2}{(\mu_j^2 \rho^2 + |x - \xi_j|^2)^2}  + 2\log \frac{\rho}{\epsilon}, 
		\end{split}
	\end{equation}
	for all $x \in \R^2$. It is easy to see that for each $j$ the function $w_j$ satisfies
	\begin{align}\label{eqwj}
		\Delta w_j + \epsilon^2 e^{w_j} = 0 \quad \mbox{in} \ \R^2.
	\end{align} 
	
	%Now, we choose a sufficiently small but fixed number $\tilde{\delta}>0$ and assume that points $\xi_j$'s satisfy
	%\begin{equation}\label{setm}
	%\begin{split}
	% |\xi_i-\xi_j|&\geq \tilde{\delta}\ \ \  {\rm for}\
	%i\neq j, \\
	%d_j &\geq \frac{\tilde{\delta}}{\la}\ \ \ \text{for all }j=1,\dots,m ,\\
	% {\rm dist}(\xi_j,S^\ast)&\leq \la^{-\frac{3}{2}},\ \ \ \text{for all }j=1,\dots,m ,
	%\end{split}
	%\end{equation}
	%where $S^\ast$ is given by (\ref{defs}), here and in the following we denote $d_j=d(\xi_j)$. Furthermore, we assume that parameter 
	% $\mu_j$'s, satisfy
	%\begin{equation}
	%\label{d1par}
	%\tilde{\delta} < \mu_j < \tilde{\delta}^{-1} , \quad {\mbox {for all}} \quad j=1,\dots,m
	%\end{equation}
	%and will be chosen properly later. 

	In order to satisfy the Robin boundary condition we introduce harmonic functions $H_j$ satisfying 
	\begin{equation}\label{E}
		\left \{ \begin{array}{rll} -\Delta H_j & = 0 \quad & \mbox{in} \ \Omega, \\ \RR_\lambda (H_j) & = - \RR_\lambda(w_{j}) \quad & \mbox{on} \ \partial \Omega. \end{array} \right .
	\end{equation}
	
	Our single-point ansatz takes the form 
	\begin{equation}\label{Uj}
		U_j(x) = w_j(x) + H_j(x), \quad x \in \Omega.
	\end{equation}

	%It is crucial for future purposes to estimate $H_j$ in terms of the regular part of the Green's function with Robin boundary conditions. Hence, 
	Using the explicit form of $w_j$ in \eqref{defwj} together with \eqref{E}, we can use maximum principle in Lemma 2.1 in~\cite{DT} over $x \mapsto H_\la(x, \xi_j)-H_j(x)$ to conclude that 
	% can compute the boundary condition satisfied by the harmonic function $x \mapsto \varphi_j(x):=H_\la(x, \xi_j)-H_j(x) -\log\left(\frac{8\rho^2\mu_j^2}{\varepsilon^2}\right)$. Indeed,
	% \begin{align*}
		% \RR_\lambda(\varphi_j(x))&=\RR_\lambda(H_\lambda(x,\xi_j))-\RR_\lambda(H_j(x))-\RR_\lambda\left(\log\left(\frac{8\rho^2\mu_j^2}{\varepsilon^2}\right)\right)\\ &=\frac{4(x-\xi_j)\cdot\nu}{|x-\xi_j|^2}+4\lambda \log(|x-\xi_j|)\\ &\quad-\frac{4(x-\xi_j)\cdot\nu}{\mu_j^2\rho^2+|x-\xi_j|^2} -\lambda \log((\mu_j^2\rho^2+|x-\xi_j|^2)^2)+\lambda \log\left(\frac{8\rho^2\mu_j^2}{\varepsilon^2}\right)\\ &\quad-\lambda \log\left(\frac{8\rho^2\mu_j^2}{\varepsilon^2}\right)\\ &=\frac{4\mu_j^2\rho^2(x-\xi_j)\cdot\nu}{(\mu_j^2\rho^2+|x-\xi_j|^2)^2|x-\xi_j|^2}+2\lambda \log\left(\frac{|x-\xi_j|^2}{\mu_j^2\rho^2+|x-\xi_j|^2}\right).
		% \end{align*}	
	% Then, 
	% \begin{align*}
		% |\RR_\lambda(\varphi_j(x))|&\leq \frac{4\mu_j^2\rho^2}{d_j^3}+2\lambda\left|\log\left(1+\frac{\mu_j^2\rho^2}{|x-\xi_j|^2}\right)\right| &=O\left(\frac{\mu_j^2\rho^2}{d_j^3}\right)+O\left(\frac{\lambda\mu_j^2\rho^2}{d_j^2}\right)
		% \end{align*}
	% and 
	\begin{equation*}
		H_j(x) + 2\log\frac{\rho}{\epsilon} + \log(8\mu_j^2) - H_\la(x, \xi_j) = O\bigg(\frac{\mu_j^2 \rho^2}{d_j^2}\bigg) +  O\left(\frac{\mu_j^2 \rho^2}{\lambda d_j^3}\right),
	\end{equation*}
	uniformly in $\bar \Omega$. Here and in what follows, we have adopted the notation $d(x) = \mathrm{dist}(x, \partial \Omega)$ for $x \in \Omega$, and $d_j = d(\xi_j)$. %From the definition of $ w_j$ in \eqref{defwj}, we see that
	%\begin{equation}\label{cotawj}
	%w_j(x) =-2\log(\mu_j^2\rho^2+|x-\xi_j|^2) + \log(8\mu_j^2) + 2\log\frac{\rho}{\epsilon}
	%\end{equation}
	
	Thus, in view of the definition of $w_j$ and the estimate above, we have the expansion
	$$
	U_{j}(x) = G_\lambda(x, \xi_j) -2\log \Big(1+\frac{\mu_j^2\rho^2}{|x-\xi_j|^2}\Big)+ O\Big(\frac{\mu_j^2 \rho^2}{d_j^2}\Big).
	$$
	
	Now we provide the first estimates concerning the ansantz when we consider the expected location of the concentration points $\{ \xi_j \}$.
	\begin{lema}\label{1}
		Let $ \{\xi_j \}_{j=1}^m \subset \Omega, \{ \mu_j \}_{j=1}^m \subset (0,+\infty)$ satisfying 
		\begin{align}
			\label{setm} & d_j \in (\delta \lambda^{-1}, \delta^{-1} \lambda^{-1}), \\
			\label{muacotadas} & \mu_j \in (\delta, \delta^{-1}),
		\end{align}
		for some $\delta \in (0,1)$. Then, for each $j=1,...,m$ we have
		\begin{equation}\label{estimacion1}
			\begin{split}
				H_j(x) = H_\la(x, \xi_j) - \log(8\mu_j^2) +4\log \la + O\big(\rho^2 \la^{2} \big),
			\end{split}
		\end{equation}
		uniformly in $\bar\Om$; and for each $K$ compact subset of $\bar\Om\sm \{\xi_j\}$, there exists $C_K > 0$ such that
		\begin{equation}\label{UG}
			|U_{j}(x) - G_\lambda(x, \xi_j)| \leq C_K \rho^2 + O(\la^2 \rho^2) \quad \mbox{in} \ K,
		\end{equation}
		where the $O$ term is independent of $K$.
	\end{lema}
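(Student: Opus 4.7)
The plan is to derive both estimates directly from the pre-lemma asymptotic already established by maximum principle, namely
\[
H_j(x) + 2\log\frac{\rho}{\epsilon} + \log(8\mu_j^2) - H_\la(x, \xi_j) = O\Big(\frac{\mu_j^2 \rho^2}{d_j^2}\Big) +  O\Big(\frac{\mu_j^2 \rho^2}{\lambda d_j^3}\Big),
\]
together with the bookkeeping observation that, by the choice $\rho = \epsilon/\la^2$ in \eqref{rho}, we have $2\log(\rho/\epsilon) = -4\log\la$. So the first step is simply to rearrange this identity to isolate $H_j(x)$, giving
\[
H_j(x) = H_\la(x, \xi_j) - \log(8\mu_j^2) + 4\log\la + O\Big(\frac{\mu_j^2 \rho^2}{d_j^2}\Big) + O\Big(\frac{\mu_j^2 \rho^2}{\la d_j^3}\Big),
\]
uniformly in $\bar\Omega$.

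Next I would feed in the two constraints~\eqref{setm}--\eqref{muacotadas}. Since $\mu_j$ is bounded and $d_j \sim \la^{-1}$, both error terms are of the same order: $\mu_j^2 \rho^2/d_j^2 = O(\rho^2 \la^2)$ and $\mu_j^2 \rho^2/(\la d_j^3) = O(\rho^2 \la^2)$. Substituting this absorbs both $O$-symbols into a single $O(\rho^2 \la^2)$, which yields exactly~\eqref{estimacion1}.

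For the second bound, I would write $U_j = w_j + H_j$ using the explicit form \eqref{defwj} of $w_j$, noting the cancellations
\[
w_j(x) = \log(8\mu_j^2) - 2\log(\mu_j^2\rho^2 + |x-\xi_j|^2) - 4\log\la.
\]
Adding \eqref{estimacion1} cancels the $\log(8\mu_j^2)$ and $4\log\la$ terms, leaving
\[
U_j(x) = -2\log(\mu_j^2\rho^2 + |x-\xi_j|^2) + H_\la(x, \xi_j) + O(\rho^2 \la^2).
\]
On a compact $K \subset \bar\Omega \setminus \{\xi_j\}$, the distance $|x-\xi_j|$ is bounded below by some $c_K > 0$, so the Taylor expansion of the logarithm yields $-2\log(\mu_j^2\rho^2 + |x-\xi_j|^2) = -4\log|x-\xi_j| + O(\rho^2/c_K^2)$. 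Recognizing $-4\log|x-\xi_j| + H_\la(x,\xi_j) = \Gamma(x-\xi_j) + H_\la(x,\xi_j) = G_\la(x,\xi_j)$ via \eqref{parteregular} delivers \eqref{UG}.

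The main obstacle is essentially absent at this stage, since the substantial analytic input (the maximum principle applied to $H_\la(\cdot,\xi_j) - H_j$ using the profile of $w_j$ on $\partial\Omega$) was carried out right above the lemma. The remaining task is arithmetic bookkeeping: carefully tracking powers of $\la$ and $\rho$ through the substitution $\rho = \epsilon/\la^2$ and verifying that both error terms in the pre-lemma display collapse to the same order $\rho^2\la^2$ once the window $d_j \sim \la^{-1}$ is imposed, and that the logarithmic expansion on $K$ introduces only a constant $C_K$ depending on $\mathrm{dist}(K, \xi_j)$ but no extra $\la$-growth.
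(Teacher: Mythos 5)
Your argument is correct and is essentially the paper's own (implicit) derivation: the lemma is obtained precisely by rearranging the maximum-principle estimate for $H_\la(\cdot,\xi_j)-H_j$ displayed just before it, using $2\log(\rho/\epsilon)=-4\log\la$ and the constraints \eqref{setm}--\eqref{muacotadas} to collapse both error terms to $O(\rho^2\la^2)$, and then expanding the logarithm away from $\xi_j$ and recognizing $G_\la=\Gamma+H_\la$ from \eqref{parteregular}. No gap remains.
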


	%and  for each $\delta > 0$ it follows that
	%\begin{equation}\label{UG}
	%U_{j}(x) = G_\lambda(x, \xi_j) + O(\rho^2\la^2\log^2\lambda), 
	%\end{equation}
	%for all $x \in \Omega$ with $|x - \xi_j| \geq \frac{\delta}{\la\log\la}$, where
	%Since we have
	%\begin{align}\label{conddj}
	%d_j= O(\lambda^{-1})
	%\end{align} 
	%and $\ds 2\log\frac{\rho}{\epsilon}=4\log \frac{1}{\la}$, we have the following fact

	Consider $\{ a_j \}_{j=1}^m$ with $a_j \in \{-1,1\}$ for all $j$. We introduce the first approximation of the problem~\eqref{eq} as
	\begin{align}\label{defU}
		U(x) := \sum_{j=1}^{m} a_j U_j(x).
	\end{align}
	
	%If $a_j=1$ we have a positive bubble centered at $\xi_j$, while a negative bubble centered at $\xi_i$ when $a_i=-1$. Thus, the main goal of this paper is to construct solutions $u$ to~\eqref{eq} with the form $u(x) = U(x) + \tilde\phi(x)$ where $U$ is defined in~\eqref{defU} and $\tilde\phi = \tilde \phi_{\epsilon, \lambda}$ is an appropriate smooth function vanishing uniformly in $\Omega$ as $\epsilon \to 0$ and $\lambda \to +\infty$. 
	%From previous expansions it follows that for $|x-\xi_j|\le \de$ we have
	%$$U(x)=a_j[ w_j(x) - \log(8\mu_j^2)+4\log\la +H_\la(x,\xi_j)] + \sum_{\substack{i=1 \\ i\ne j}}^m a_i G_\la(x,\xi_i) + O(\rho^2\la^2).$$

	Following the directions of~\cite{dkm}, we study the problem in expanded variables depending on $\rho$ given in~\eqref{rho}. For this, we consider the function $v(y)=u(\rho y)$ for $y\in\Omega_\rho:=\{\xi\in\R^2:\,\rho\xi\in\Omega\}$, so that $u$ is a solution to \eqref{eq} if and only if $v$ satisfy
	\begin{equation}\label{line}
		\left \{ \begin{array}{rll} \Delta v+\rho^2\epsilon^2 (e^v - e^{-v}) & =0  \quad &\mbox{in }\Omega_\rho, \\
			\RR_{\la \rho}  v & =0 \quad &\mbox{on }\partial\Omega_\rho. \end{array} \right .
	\end{equation}
	%where, up to relabeling the parameter $\epsilon$, {\br we have replaced $\sinh(v)$ by $e^v - e^{-v}$ in order to avoid extra normalizations in subsequent arguments, whenever we use $w_j$ in~\eqref{defwj} as the basic brick in the ansatz $U$.}
	
	We also adopt the notation $\xi_j'=\frac{1}{\rho}\xi_j$, $V_j(y)=U_j(\rho y)$ and $V(y)=U(\rho y)$ for $y \in \Omega_\rho$ and we look for solutions to \eqref{line} in the form $v=V+\phi$ with $\phi: \Omega_\rho \to \R$ small in an adequate norm. 
	
	Thus, considering $v = V + \phi$, problem~\eqref{line} can be equivalently formulated as
	\begin{equation}\label{line1}
		\left \{ \begin{array}{rll} L \phi  & =-[ R+\Lambda \phi  + N(\phi)] \quad & \mbox{in }\Omega_\rho, \\
			\RR_{\la \rho} \phi & = 0 \quad  &\mbox{on }\partial\Omega_\rho, \end{array} \right .
	\end{equation}
	where
	\begin{align}
		\label{L} & L \phi =\Delta\phi+W\phi, \quad\mbox{ with }\quad W=\sum_{j=1}^m \frac{8\mu_j^2}{(\mu_j^2 + |y-\xi_j'|^2)^2 }, \\
		\label{lam} & \Lambda \phi = \Big[(\epsilon\rho)^2(e^V+e^{-V}) - W\Big]\phi, \\
		\label{N} & N(\phi) =(\epsilon\rho)^2\lf[e^{V}(e^{\phi}-\phi-1)-e^{-V}(e^{-\phi}+\phi -1)\rg], \\
		\label{R} & R(y) = \Delta V+(\epsilon\rho)^2(e^V-e^{-V}). 
	\end{align}

	%\begin{equation}\label{L}
	%L(\phi)=\Delta\phi+W\phi, \quad\mbox{ with }\quad W=\sum_{j=1}^m \frac{8\mu_j^2}{(\mu_j^2 + |y-\xi_j'|^2)^2 },
	%\end{equation}
	%\begin{equation}\label{lam}
	%\Lambda(\phi) = \Big[(\epsilon\rho)^2(e^V+e^{-V}) - W\Big]\phi
	%\end{equation}
	%\begin{equation}\label{N}
	%N(\phi)=(\epsilon\rho)^2\lf[e^{V}(e^{\phi}-\phi-1)-e^{-V}(e^{-\phi}+\phi -1)\rg]
	%\end{equation}
	%and
	%\begin{equation}\label{R}
	% R(y) = \Delta V+(\epsilon\rho)^2(e^V-e^{-V}). 
	%\end{equation}
	
	\subsection{First estimates for problem~\eqref{line1}.} In what follows, we recall relevant estimates concerning the Robin function $H_\la$ we collect from~\cite{DKM} for the two-dimensional case. Define
	\begin{align}
		\label{expanrobin1} & h(\theta)= -4\log(2\theta)+8\int_0^\infty e^{-t}\log\left(2\theta+t\right)dt, \\
		\label{expanrobin2} & v(\theta)=-2\theta-4\theta\int_{0}^\infty \frac{e^{-2\theta s}}{(1+s)^2}ds.
	\end{align}
	
	It is possible to see that $h: (0,\infty)\to\mathbb{R}$ has a unique nodegenerate minimum $\theta_{0}\in(0,\infty)$.
	
	The Robin function obeys the expansion
	\begin{equation}\label{expanrobin}
		H_\la(x,x)=-4\log\la + h(\la d(x))+\la^{-1}\kappa(\tilde{x})v(\la d(x))+O(\la^{-1-\alpha})
	\end{equation}
	for each $x$ such that $a_1\leq \la d(x) \leq a_2$ for some constants $0<a_1<a_2$, and all $\la>0$ large enough. Here, $\alpha \in (0,1)$, $\tilde x$ is the projection of $x$ onto the boundary, 
	$\kappa(\tilde{x})$ is the mean curvature of $\partial\Omega$ at
	$\tilde{x}$, see Lemma 2.1 in~\cite{DKM}.

	\medskip
	
	For reasons that will be made clear later in the next section, for measurable functions $h: \Omega_\rho \to \R$ we introduce the following weighted norm
	\begin{equation}\label{normstar}
		\|h\|_*=\sup_{y\in \Om_\rho} \lf(\sum_{j=1}^m(1+|y-\xi_j'|)^{-2-\sigma} +\rho^2\rg)^{-1}|h(y)|,
	\end{equation}
	for $0<\sigma<1$ fixed.
	
	\begin{lema}\label{rest}
		Let $\{ \xi_j\}_{j=1}^m \subset \Omega$ such that~\eqref{setm} holds for some $\delta > 0$, and assume further that
		\begin{equation}\label{xiseparados}
			|\xi_i - \xi_j| \geq \delta \quad \mbox{for} \ i \neq j.
		\end{equation}
		
		Set $\{ \mu_j \}_{j=1}^m$ as
		\begin{align}\label{condmu}
			\log(8 \mu_j^2) = H_\la(\xi_j, \xi_j) + \sum_{i \neq j} a_i  G_\la(\xi_i, \xi_j) + 4\log \lambda.
		\end{align}
		
		Then, there exists $C > 1$ just depending on $\delta$ such that $C^{-1} \leq \mu_j \leq C$  for all $j=1,..,m$. 
		
		Moreover, we have
		\begin{equation}\label{estR}
			\|R\|_*\le C\rho\la^9\log\la =C \epsilon\lambda^7\log\la,
		\end{equation}
		and 
		$$
		\|\Lambda\phi \|_*\le C\rho\la^9\log\la \|\phi\|_{L^\infty (\Om_\rho)},
		$$
		for all $\phi \in L^\infty(\Omega_\rho)$.
		% for all $y \in \Omega_\rho$
		% \begin{equation}\label{est1R}
			% | R(y)| \leq C \epsilon \lambda^7 \log \lambda \sum_{j=1}^{m} \frac{1}{1 + |y - \xi_j'|^3}.
			% \end{equation}
	\end{lema}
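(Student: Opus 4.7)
\emph{Plan for the $\mu_j$ bound.} I would start by verifying the claim on the $\mu_j$. The hypothesis $d_j \in (\delta\la^{-1}, \delta^{-1}\la^{-1})$ puts $\la d_j \in [\delta, \delta^{-1}]$, so the expansion~\eqref{expanrobin} applies at each $\xi_j$ and gives
\begin{equation*}
H_\la(\xi_j, \xi_j) = -4\log\la + h(\la d_j) + O(\la^{-1}),
\end{equation*}
with $h$ continuous, and therefore bounded, on $[\delta, \delta^{-1}]$. Combined with~\eqref{Gfxi} (applicable since $|\xi_i - \xi_j| \geq \delta$ for $i\ne j$), the right-hand side of~\eqref{condmu} reduces to $\log(8\mu_j^2) = O(1)$, the $4\log\la$ cancelling against the leading term of $H_\la(\xi_j,\xi_j)$. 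The uniform bounds on $\mu_j$ follow.

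\emph{Plan for $R$.} Since each $H_j$ is harmonic and each $w_j$ solves~\eqref{eqwj}, one has $\Delta V_j = -W_j$ after rescaling, so
\begin{equation*}
R(y) = (\e\rho)^2 \lf( e^{V(y)} - e^{-V(y)} \rg) - \sum_{j=1}^m a_j W_j(y).
\end{equation*}
I would partition $\Om_\rho$ into inner regions $\mathcal{N}_k = \{|y - \xi_k'| \leq \delta/(3\rho)\}$ and a complement. On $\mathcal{N}_k$, the definition of $w_k$ combined with Lemma~\ref{1} yields
\begin{equation*}
V_k(y) = -2\log(\mu_k^2 + |y - \xi_k'|^2) - 4\log\e + 8\log\la + H_\la(\rho y, \xi_k) + O(\rho^2\la^2),
\end{equation*}
while $V_j(y) = G_\la(\rho y, \xi_j) + O(\rho^2\la^2)$ for $j \neq k$. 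The key observation is that~\eqref{condmu} is exactly the condition making the leading-order identity
\begin{equation*}
(\e\rho)^2 e^{a_k V(\xi_k')} = a_k W_k(\xi_k')
\end{equation*}
hold, so $(\e\rho)^2(e^V - e^{-V}) - a_k W_k$ vanishes at $\xi_k'$ modulo $O(\rho^2 \la^2)$, the wrong-sign exponential being exponentially negligible because $|V_k|$ is much larger than $\log\la$. For nearby $y \in \mathcal{N}_k$, Taylor expansion using $|\nabla_x H_\la|, |\nabla_x G_\la| = O(\la)$ in the boundary layer contributes an extra $O(\rho\la|y - \xi_k'|)$ mismatch, and dividing by the weight $(1 + |y - \xi_k'|)^{-2-\sigma} + \rho^2$ keeps this within the claimed bound. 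On the complement, $W_j = O(\rho^4)$ is absorbed into the $\rho^2$ term, and $V = \sum_j a_j G_\la(\rho y, \xi_j) + O(\rho^2\la^2) = O(\log\la)$ gives $(\e\rho)^2 e^{\pm V} = O(\rho^2 \la^{O(1)})$, again consistent with the weighted norm.

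\emph{Plan for $\Lambda\phi$, and the main obstacle.} For $\Lambda\phi = [(\e\rho)^2(e^V + e^{-V}) - W]\phi$ the analysis is parallel: in each $\mathcal{N}_k$ the correct-sign exponential matches $W_k$ at leading order and the other is exponentially small, so $(\e\rho)^2(e^V + e^{-V}) - W$ inherits the same $\|\cdot\|_*$ bound as $R$; the conclusion then follows from $\|\Lambda\phi\|_* \leq \|(\e\rho)^2(e^V + e^{-V}) - W\|_* \, \|\phi\|_\infty$. The hardest part, I expect, is the sharp tracking of $\la$-powers: near $\partial\Om$ the normal derivatives of $H_\la$ and $G_\la$ are of order $\la$ (per~\cite{DKM}), so each Taylor remainder and each exponentiation can cost factors of $\la$ or $\log\la$. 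Careful accumulation of these small losses, especially through the transition between the inner and outer regimes, is what ultimately produces the $\rho\la^9\log\la$ prefactor.
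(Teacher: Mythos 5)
Your treatment of the $\mu_j$ bound is correct and matches the paper: the expansion~\eqref{expanrobin} gives $H_\la(\xi_j,\xi_j) = -4\log\la + O(1)$ on the scale $\la d_j \in [\delta,\delta^{-1}]$, the $4\log\la$ cancels, and~\eqref{Gfxi} controls the mutual Green contributions. Your structural observation that~\eqref{condmu} is precisely what forces the leading-order cancellation between $(\e\rho)^2 e^{a_kV}$ and $W_k$ at $\xi_k'$ is also the right one, and is what the paper exploits.

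However, the two-region decomposition (inner ball $\mathcal{N}_k = \{ |y-\xi_k'| \le \delta/(3\rho) \}$ plus complement) does not work, and this is a genuine gap. The Taylor-expansion step you invoke produces a remainder $O(\rho\la\log\la\,|y-\xi_k'|)$ that lives \emph{in the exponent}: you are writing $(\e\rho)^2 e^V = W_k(y)\exp\!\big(O(\rho\la\log\la\,|y-\xi_k'|)+O(\rho^2\la^2)\big)$, and linearizing the exponential to get a multiplicative $1+O(\cdot)$ factor. This is only legitimate when $\rho\la\log\la\,|y-\xi_k'| = O(1)$, i.e.\ for $|y-\xi_k'| \lesssim (\rho\la\log\la)^{-1}$ — not all the way out to $|y-\xi_k'| \sim \delta/\rho$, where the exponent becomes of order $\la\log\la$ and the exponential is uncontrolled. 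The paper therefore uses a \emph{three}-region decomposition, and the intermediate annulus $\delta/(\rho\la\log\la) \le |y-\xi_j'| \le \delta/\rho$ is treated by a different mechanism: there one does not Taylor-expand around $\xi_j$ but instead uses the pointwise Green/Robin bounds of Lemma~\ref{gest} (specifically~\eqref{Hxi} and~\eqref{UG}) to bound $e^{V}$ and $e^{-V}$ directly, producing $|R(y)| \le C\,\e\la^7\log\la\,(1+|y-\xi_j'|^3)^{-1}$. That annulus is where the $\rho\la^9\log\la = \e\la^7\log\la$ prefactor in~\eqref{estR} actually arises; in the genuinely inner core $|y-\xi_j'|\le \delta/(\rho\la\log\la)$ and in the far region the contributions to the $*$-norm are much smaller ($O(\rho\la\log\la)$ and $O(\rho^2+\e^2)$, respectively). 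Your closing remark that the main difficulty is tracking powers of $\la$ is correct, but the plan as written contains no mechanism that produces $\la^9$; without the intermediate regime and its separate estimate, the argument does not close.

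A secondary, minor inaccuracy: the Lipschitz bound used for the Taylor step should be $|\nabla_x H_\la|, |\nabla_x G_\la| = O(\la\log\la)$ near the boundary (as in~\eqref{tehxi}--\eqref{tegxi}), not $O(\la)$; the $\log\la$ matters for the final constant.
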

	
	\medskip
	\noindent
	{\bf \textit{Proof:}} The uniform estimates for $\mu_j$ are a consequence of the expansion of the Robin function~\eqref{expanrobin}, together with the estimates for the Green function in Lemma~\ref{1}.
	
	We estimate $R$ by dividing the analysis in different regions. Note that by definition of $V_i$ and \eqref{eqwj} we see that for any $i=1,\dots,m$
	\begin{equation*}
		\Delta V_i(y) = \rho^2\Delta w_i(\rho y) = - \rho^2\epsilon^2  e^{w_i(\rho y)} = -\frac{8\mu_i^2 \rho^4}{(\mu_i^2 \rho^2 + |\rho y - \xi_i|^2)^2}.
	\end{equation*}
	
	\medskip
	
	\noindent
	\textsl{Case 1:} Assume that $|y - \xi_j'| \geq \dfrac{\delta}{\rho}$ for all $j = 1,...,m$ with $\delta > 0$ small but independent of $\epsilon, \lambda$. Then, for each $j$ and for $y \in \Omega_\rho$ with $|y - \xi_j'| \geq \dfrac\delta\rho$ we have from  \eqref{eqwj} that
	\begin{equation}\label{DeltaU}
		\Delta V_j(y) =  - \frac{8\mu_j^2 \rho^4}{(\mu_j^2 \rho^2 + |\rho y - \xi_j|^2)^2} = O(\rho^4),
	\end{equation} 
	where the $O$ term is uniform if $\delta > 0$ and $\mu_j > 0$ are fixed. On the other hand, using the estimate~\eqref{UG} we have
	\begin{equation*}
		\max_{a = \pm 1} \{ e^{aV_j(y)} \} \leq C \max_{a = \pm 1} \{ e^{a G_\la(\rho y, \xi_j)} \}. 
	\end{equation*}
	From Lemma \ref{gest} we deduce that $e^{V_j}$ is uniformly bounded away $\xi_j$. This together with~\eqref{DeltaU} leads us to
	\begin{equation}\label{R1}
		|R(y)| = |\Delta V + \rho^2\epsilon^2 g(V)| \leq C(\rho^4 + \rho^2\epsilon^2) \quad \mbox{ in } \ \Omega_\rho \setminus \bigcup_{j=1}^m \overline{B_{\delta/\rho} (\xi_j')} .
	\end{equation}
	% Thus, in view of the choice of $\rho$, we conclude that
	% \begin{equation}
		% |R(y)| \leq C \rho^2 \epsilon^2 \leq C \lambda^2 \epsilon \sum_{j=1}^{m} \frac{1}{1 + |y - \xi_j'|^3} \quad \mbox{in} \ \Omega_\rho \setminus \bigcup_{j=1}^m B_{\delta/\rho}(\xi'_j).
		% \end{equation}
	
	\medskip
	\noindent
	\textsl{Case 2:} Now, assume that $\delta/(\rho\lambda\log\la ) \leq |y - \xi_j'| \leq \delta/\rho$ for some $j \in \{ 1,\dots ,m\}$. As in \eqref{DeltaU} in the previous case, we see that
	\begin{align*}
		|\Delta V_j(y)|= \frac{8\mu_j^2 \rho^4}{(\mu_j^2 \rho^2 + \delta^2[\lambda\log\la]^{-2})^2} = O(\rho^4 \lambda^4\log^4\la) = O(\rho^2\epsilon^2\log\la).
	\end{align*}
	
	Using again estimates~\eqref{UG},~\eqref{Hxi} and the fact that $d_j = O(\la^{-1})$, we arrive at
	\begin{equation*}
		e^{V(y)}=e^{a_jG_\la(\rho y,\xi_j)+\sum_{l\ne j} a_lG_\la(\rho y,\xi_l) + O(\rho^2\la^2\log^2\la)}.
	\end{equation*} 
	
	If $a_j=1$ then we get that
	$$e^{V(y)}=\frac{e^{H_\la(\rho y,\xi_j)+O(1)}}{|\rho y-\xi_j|^4} =O\lf(\frac{\la^4}{\rho^4|y-\xi_j'|^4}\rg)$$
	and if $a_j=-1$ then we find that
	$$e^{V(y)}= |\rho y-\xi_j|^4e^{-H_\la(\rho y,\xi_j)+O(1)} =O\lf( \rho^4\la^4|y-\xi_j'|^4\rg).$$
	Taking into account that $a_j=1$ becomes $-1$ in $e^{-V(y)}$ and $a_j=-1$ becomes $1$ in $e^{-V(y)}$, a similar argument to estimate $e^{-V(y)}$ lead us to obtain
	\begin{align*}
		|\rho^2\epsilon^2 g(V	(y))| \leq C \rho^2\epsilon^2\lf[\frac{\la^4}{\rho^4|y-\xi_j'|^4}+ \rho^4\la^4|y-\xi_j'|^4\rg],
	\end{align*}
	for $y\in B_{\delta/\rho}(\xi_j') \setminus B_{\delta/(\rho\lambda\log\la)}(\xi_j')$ and for some constant just depending on $\delta$. This estimate allows us to conclude that
	\begin{equation}\label{R2}
		|R(y)| \leq C  \epsilon \lambda^7\log\la \sum_{j=1}^{m} \frac{1}{1 + |y - \xi_j'|^3}, \quad y \in \Omega_\rho \cap (B_{\delta/\rho}(\xi_j') \setminus B_{\delta/(\lambda \rho\log\la)}(\xi_j')),
	\end{equation}
	in view of
	$$\rho^2 \e^2\lf[\log\lambda+  \frac{\la^4}{\rho^4|y-\xi_j'|^4}+ \rho^4\la^4|y-\xi_j'|^4\rg][1+|y-\xi_j'|^3]\le C\e\la^7\log\la $$
	for $y \in \Omega_\rho \cap (B_{\delta/\rho}(\xi_j') \setminus B_{\delta/(\lambda \rho\log\la )}(\xi_j'))$.
	
	\medskip
	\noindent
	\textsl{Case 3:} Finally, assume that $|y - \xi_j'| \leq \delta/(\rho\lambda\log\la)$ for some $j\in\{1,\dots,m\}$. In this case, by the condition over the points $\xi_j$ we have that $|\xi_j - \xi_i| \geq \delta$ when $i \neq j$ (we can choose $\tilde{\delta}>\de$ if necessary) and therefore we can use the estimates of Case 1 for the portion of $R$ relative to points indexed by $i \neq j$.

	Note that we get that for $j$
	\begin{equation}\label{hola}
		\Delta V(y) = - a_j \frac{8\mu_j^2 }{(\mu_j^2  + | y - \xi_j'|^2)^2} + O(\rho^4),
	\end{equation}
	in the considered region where the $O$-term depends only on $\delta$ and $\mu_j$.

	Now, we deal with the exponential term. We first assume that $a_j = 1$. Notice that
	\begin{equation*}
		\begin{split}
			\rho^2\epsilon^2 g(V) %&= \rho^2\epsilon^2 (e^V - e^{-V}) \\
			&= \rho^2\epsilon^2 (e^{w_j(\rho y)} e^{H_j(\rho y) + \sum_{i \neq j} a_i V_i} - e^{-V} ).
		\end{split}
	\end{equation*}
	By estimates~\eqref{estimacion1} we see that
	\begin{align*}
		e^{V(y)} &=  e^{w_j(\rho y)} e^{H_j(\rho y) + \sum_{i \neq j} a_i V_i}  \\
		&= e^{w_j(\rho y)}\exp \Big{(} H_\la(\rho y, \xi_j) + \sum_{i \neq j} a_i G_\la (\rho y, \xi_i) - \log(8\mu_j^2) \\
		&\qquad\qquad + 4\log\lambda + O\Big(\frac{\mu^2_j \rho^2}{d_j^2}\Big) \Big{)}
	\end{align*} 
	
	At this point, we notice that by a first-order Taylor expansion, for $y$ in this region we have
	\begin{equation}\label{tehxi}
		H_\la(x,\xi_j)=H_\la(\xi_j,\xi_j) + O(\la\log \lambda \, |x-\xi_j|), 
	\end{equation}
	and for $i\ne j$ we have
	\begin{equation}\label{tegxi}
		G_\la(x,\xi_i)= G_\la(\xi_j,\xi_i) + O(\lambda\log\lambda \, |x-\xi_j|),
	\end{equation}
	where the $O$ terms are inependent of $\epsilon$ and $\la$.
	
	Using this and assumption~\eqref{condmu}, we arrive at
	\begin{align*}
		\rho^2\epsilon^2 e^{V(y)} & = \rho^2 \epsilon^2 e^{w_j(\rho y)}\cdot \exp \bigg{(} O(\lambda\log\la \, |\rho y - \xi_j|)  + O\Big(\frac{\mu^2_j \rho^2}{d_j^2}\Big) \bigg{)} \\
		&= \frac{8\mu_j^2}{(\mu_j^2 + |y - \xi_j'|^2)^2}\lf[1 + O(\rho \lambda\log\la |y - \xi_j'|) +  O(\rho^2 \la^2) \rg] % \frac{8\mu_j^2 \rho^2}{(\mu_j^2 \rho^2 + |x - \xi_j|^2)^2}\lf(1 + O(\lambda\log\la |x - \xi_j|) +  O\Big(\frac{\mu^2_j \rho^2}{d_j^2}\Big) \rg)
	\end{align*}
	%and in expanded variables
	%$$\rho^2\epsilon^2 e^{V(y)}=  \frac{8\mu_j^2}{(\mu_j^2 + |y - \xi_j'|^2)^2}\lf[1 + O(\rho \lambda\log\la |y - \xi_j'|) +  O(\rho^2 \la^2) \rg].$$
	Using the same computation, it is possible to see that
	\begin{align*}
		%\epsilon^2 e^{-U} & = \epsilon^2 e^{- w_j} \exp \bigg{(} O(\lambda \log \la|x - \xi_j|)  + O\Big(\frac{\mu^2_j \rho^2}{d_j^2}\Big) \bigg{)} \\
		\rho^2\epsilon^2 e^{- V(y)}&=  \rho^2\epsilon^2 \frac{(\mu_j^2\rho^2 + |\rho y - \xi_j|^2)^2\epsilon^2}{8\mu_j^2\rho^2 }\; O(1)\\
		& =O\lf(\rho^4\epsilon^4[1+|y-\xi_j'|]^4\rg)=O\lf(\rho^4\epsilon^4\Big[\frac{\delta}{\rho\la}\Big]^4\rg)=O(\rho^2\epsilon^2) 
	\end{align*}
	From here, we conclude that
	\begin{equation}\label{eesv}
		\rho^2\epsilon^2 g(V) = \frac{8\mu_j^2 }{(\mu_j^2 + |y - \xi_j'|^2)^2}\left[1 + O(\rho \lambda \log\la |y - \xi_j'|) +  O(\rho^2\la^2)\right] + O(\rho^2\epsilon^2)
	\end{equation}
	Then, joining the above estimates and~\eqref{hola} we get for $y \in B_{\delta/(\rho \lambda\log\la)}(\xi_j')$
	\begin{equation}\label{erxi}
		R(y)=\frac{8\mu_j^2 }{(\mu_j^2  + |y - \xi_j'|^2)^2}\left[O(\rho \lambda\log\la |y - \xi_j'|) +  O(\rho^2\la^2)\right] + O(\rho^4+\rho^2\epsilon^2).
	\end{equation}
	% and hence, for $y \in B_{\delta/(\rho \lambda\log\la)}(\xi_j')$
	% \begin{align}\label{estrxi}
		% |R(y)| &\le \frac{C}{(1 + |y - \xi_j'|^2)^2} (\lambda \rho\log\la |y - \xi_j'| + \rho^2 \lambda^2) + C\rho^2\epsilon^2 \nonumber \\
		% &\leq \frac{C\lambda \rho\log\la}{1 + |y - \xi_j'|^3} + C\rho^2\epsilon^2.
		% \end{align}
	We finish the proof by assuming $a_j = -1$. Similarly as above, we have the following estimates
	\begin{equation*}
		\rho^2\epsilon^2 g(V) = \rho^2\epsilon^2 (e^{V} - e^{w_j(\rho y)} e^{H_j(\rho y) + \sum_{i \neq j} a_i V_i}  ).
	\end{equation*}
	$$\rho^2\epsilon^2  e^{-V(y)}=  \frac{8\mu_j^2}{(\mu_j^2 + |y - \xi_j'|^2)^2}\lf(1 + O(\rho \lambda \log\la|y - \xi_j'|) +  O(\rho^2 \la^2) \rg).$$
	and
	\begin{align*}
		\rho^2\epsilon^2 e^{V(y)}& =O\lf(\rho^4\epsilon^4[1+|y-\xi_j'|]^4\rg)=O(\rho^2\epsilon^2).
	\end{align*}
	Thus, we get \eqref{eesv} and using \eqref{hola} we also obtain \eqref{erxi}. %Hence, it follows estimate \eqref{estrxi}. Estimate \eqref{est1R} follows by considering additionally~\eqref{R1} and~\eqref{R2}. 
	From the choice of $\rho$, the definition of $\| \cdot \|_*$ and estimates \eqref{R1}, \eqref{R2} and \eqref{erxi} we deduce \eqref{estR}. The estimate for $\Lambda \phi$ follows the same computations. This completes the proof.\qed
	
	\medskip
	%Consequently, from  and the definition of $\|\cdot\|_*$ it follows that
	%Furthermore, from similar estimates obtained in latter proof we deduce next fact.
	%
	%\medskip
	%\begin{lema}\label{estlam}
	%The following estimates holds
	%$$\big\|(\epsilon\rho)^2(e^V+e^{-V}) - W\big\|_*\le C \e\la^7\log\la,$$
	%so that, .
	%\end{lema}
	%
	%Notice that from definition of $\|\cdot \|_*$ and $W$, it follows that $\|W\|_*\le C$ and from previous estimates we get that
	%$$\big\|(\epsilon\rho)^2 e^V \big\|_*\le C \quad\text{and}\quad \big\|(\epsilon\rho)^2 e^{-V} \big\|_*\le C. $$

	\section{Linear problem}\label{finite}
	
	In this section we shall reduce the solvability of \eqref{line1} by using the so-called Lyapunov-Schmidt finite dimensional variational reduction and prove the main result. In this procedure an important step is the solvability theory for the linear operator, obtained as the linearization of \eqref{line1} at the approximating solution $V$, namely, \eqref{L}. Observe that, as $\e\to 0$ and $\la\to+\infty$, formally the operator $L$, around a point $\xi_j'$, approaches $L_j$ defined in $\R^2$ as
	$$L_j(\phi)=\Delta\phi + \frac{8\mu_j^2}{
		(\mu_j^2+|y-\xi_j'|^2)^2} \phi,\quad j=1,\dots,m.$$

	We start this section with some notation. Given  $v_j = v_{\mu_j}$ the entire functions solving the equation
	$$
	\Delta v + \frac{8\mu_j^2}{(\mu_j^2 + |x|^2)^2} v = 0 \quad x \in \R^2,
	$$
	as a consequence of the invariance under dilations and traslations of the problem~\eqref{Liouville}, it is possible to find nontrivial solutions of this equations and, for each $x \in \R^2$, are denoted by
	\begin{equation}\label{zij}
		\begin{split}
			z_{ij}(x) = & \frac{\partial}{\partial \zeta_i} v_{j}(|x + \zeta|) \Big{|}_{\zeta = 0} = \frac{4\mu_j x_i}{\mu_j^2+|x|^2}, \quad i = 1,2, \ j = 1,...,m, \\
			z_{0j}(x) = & \frac{\partial}{\partial s} (v_{j}(|sx|) + 2\log(s)) \Big{|}_{s=1} = \frac{\mu_j^2-|x|^2}{\mu_j^2+|x|^2}, \quad j = 1,...,m.
		\end{split}
	\end{equation}
	
	The kernel of $L_j$ in $L^\infty(\R^2)$ is non-empty and is spanned by the functions $Z_{ij}$, $i=0,1,2$, due to the intrinsic invariances of the problem \eqref{line1}, where 
	\begin{eqnarray}\label{Zij}
		\ \ \ \ Z_{0j}(y) =\frac{\mu_j^2-|y-\xi_j'|^2}{
			\mu_j^2+|y-\xi_j'|^2},\quad \ \ \ Z_{ij}(y) = \frac{4\mu_j(y-\xi_j')_i}{ \mu_j^2+|y-\xi_j'|^2},\ \ i=1,2,
	\end{eqnarray}
	see \cite{bp} for a proof. Consider a large but fixed number $R_0>0$ and a radial
	smooth cut-off function $\chi$ with $\chi(r)=1$ if $r<R_0$ and
	$\chi(r)=0$ if $r>R_0+1$. Write
	\begin{eqnarray}\label{cof}
		\chi_j(y)=\chi\left(|y-\xi'_j|\right).
	\end{eqnarray}
	
	%We define the operator 
	%$$
	%L \phi = \Delta \phi + W \phi, \quad \mbox{where} \quad W(y)=\sum_{j=1}^m W_j(y):= \sum_{j=1}^{m}\frac{8\mu_j^2}{(\mu_j^2 + |y - \xi_j'|^2)^2}.
	%$$
	%
	%
	%
	
	Recalling $L$ in~\eqref{L}, the main result of this section is the following 
	\begin{prop}\label{inliop}
		There exist $\la_0>0$ and $\e_0>0$ such that for $\la\ge \la_0$ and $\e>0$ satisfying $0<\rho\la<\e_0$, for any points $\xi=(\xi_1,\dots, \xi_m)$ satisfying \eqref{setm} and for any $h\in L^\infty(\Om_\rho)$ with $\|h\|_*<+\infty$, there is a unique solution $\phi:=T_\la(h)$ and coefficients $c_{ij}(\xi)\in\R$, $i=1,2$, $j=1,\dots,m$ of problem
		\begin{eqnarray}\label{eq:3.5}
			\left\{ \arraycolsep=1.5pt
			\begin{array}{lll}
				\ds L(\phi)=h+ \sum\limits_{i=1}^{2}\sum_{j=1}^m c_{ij}\chi_jZ_{ij}
				\ \ &
				{\rm in}\ \Omega_\rho\\[0.4cm]
				\RR_{\rho\lambda}\phi=0\ \ & {\rm on}\ \partial\Omega_\rho \\[0.4cm]
				\displaystyle\int_{\Omega_\rho } \chi_j Z_{ij}\phi=0\ \ &  {\rm
					for}\ i=1,2,\ j=1,\dots,m.
			\end{array}
			\right.
		\end{eqnarray}
		Moreover, the map $\xi\mapsto \phi(\xi)$ is differentiable with
		\begin{equation}\label{estphicij}
			\|\phi\|_{L^\infty(\Om_\rho) }\le C|\log(\rho\la)|\ \|h\|_*, \quad |c_{ij}|\le C\|h\|_*, 
		\end{equation}
		and
		\begin{equation}\label{estdphi}
			\|\partial_{(\xi_l)_k}\phi\|_{L^\infty(\Om_\rho) }\le C|\log(\rho\la)|^2\ \|h\|_*, \quad \text{for }\ \ l=1\dots,m,\ k=1,2. 
		\end{equation}
	\end{prop}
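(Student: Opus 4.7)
The plan is to establish Proposition~\ref{inliop} via the standard two-step Lyapunov--Schmidt scheme. First I would prove the a priori bound \eqref{estphicij}; existence and uniqueness of $T_\la(h)$ then follow from the Fredholm alternative applied in $H^1(\Om_\rho)$ intersected with the codimension-$2m$ subspace $\{\phi : \int_{\Om_\rho}\chi_j Z_{ij}\phi = 0,\ i=1,2,\ j=1,\dots,m\}$, equipped with the quadratic form associated to the weak formulation of the Robin problem.

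For the a priori estimate, I argue by contradiction: suppose sequences $\la_n \to \infty$, $\e_n \to 0$ with $\rho_n\la_n \to 0$, data $h_n$ with $\|h_n\|_* \to 0$, and $\phi_n$ solving \eqref{eq:3.5} with $\|\phi_n\|_{L^\infty(\Om_{\rho_n})} = 1$. First, I would control the Lagrange multipliers by testing \eqref{eq:3.5} against $Z_{kl}$, integrating by parts using the Robin condition, and exploiting that $L(Z_{kl})$ concentrates near $\xi_l'$ with non-degenerate pairing $\int\chi_l Z_{kl}^2 \sim c>0$; this produces a diagonally dominant linear system yielding $|c_{ij}| \leq C(\|h_n\|_* + o(1))$. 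Second, an inner analysis: the translated sequence $\hat\phi_{n,j}(y) = \phi_n(y+\xi_j')$ is uniformly bounded in $L^\infty(\R^2)$, and along a subsequence converges locally uniformly to a bounded solution $\hat\phi_j$ of $L_j\hat\phi = 0$ in $\R^2$, orthogonal to $z_{1j}, z_{2j}$; by the classification recalled from \cite{bp}, $\hat\phi_j = \alpha_j z_{0j}$.

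The main obstacle, and the source of the $|\log(\rho\la)|$ factor, lies in the outer analysis. Since $z_{0j}$ tends to $-1$ at infinity rather than to zero, matching $\phi_n$ to $\alpha_j Z_{0j}$ in an intermediate annular region forces a nontrivial harmonic-type correction at the global scale. Representing $\phi_n$ through the Green function of the Robin operator on $\Om_{\rho_n}$ and applying Lemma~\ref{gest} rescaled to the expanded domain produces the intermediate bound $\|\phi_n\|_\infty \leq C|\log(\rho_n\la_n)|(\|h_n\|_* + o(1))$, which is the desired growth but does not yet close the argument. The contradiction is obtained by showing $\alpha_j = 0$: I would test the equation against a fixed function that pairs non-trivially with each $Z_{0j}$ but whose asymptotic pairings with $h_n$ and with $\chi_j Z_{ij}$ vanish in the limit, which combined with the control of the $c_{ij}$ forces $\|\phi_n\|_\infty \to 0$, contradicting $\|\phi_n\|_\infty = 1$.

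For the differentiability of $\xi\mapsto\phi(\xi)$ and the bound \eqref{estdphi}, I would apply the implicit function theorem to the map sending $(\xi,\phi,(c_{ij}))$ to the left-hand side minus right-hand side of \eqref{eq:3.5}, using the uniform invertibility granted by \eqref{estphicij}. Formal differentiation in $(\xi_l)_k$ shows that $\partial_{(\xi_l)_k}\phi$ satisfies an analogous system whose right-hand side is built from $\partial_{(\xi_l)_k}W \cdot \phi$ and $\partial_{(\xi_l)_k}(\chi_j Z_{ij})$, with orthogonality conditions modified by projection terms; the $\|\cdot\|_*$-norm of this right-hand side is $O(|\log(\rho\la)|\|h\|_*)$ by direct computation on the explicit expressions of $W$ and $Z_{ij}$, so one further application of \eqref{estphicij} produces the squared-logarithm factor in \eqref{estdphi}.
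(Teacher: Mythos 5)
Your overall architecture (a priori estimate, then Fredholm alternative in the subspace orthogonal to $\chi_jZ_{ij}$, $i=1,2$, then differentiation of the fixed-point/IFT relation for \eqref{estdphi}) matches the paper's, and your blow-up analysis near each $\xi_j'$ is essentially how the paper (following \cite{DT}) proves the \emph{full-orthogonality} estimate of Lemma~\ref{lema1}. The genuine gap is in the step that distinguishes Proposition~\ref{inliop} from that lemma: the problem \eqref{eq:3.5} imposes no orthogonality against the dilation elements $Z_{0j}$, and your plan to kill the residual component $\alpha_j z_{0j}$ by ``testing against a fixed function that pairs non-trivially with each $Z_{0j}$'' is precisely the point that cannot be taken for granted. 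Since $Z_{0j}\to -1$ at infinity, any admissible test function must be a \emph{global, parameter-dependent} corrector adapted to the Robin condition on $\partial\Omega_\rho$; the paper builds it as $\tilde z_j=\eta_{1j}Z_{0j}+(1-\eta_{1j})\hat z_{2j}$ with $\hat z_{2j}$ involving the half-plane Robin Green function composed with a conformal map $F_{j,\rho}$ and the normalizing factor $1/\log(2\rho^{-1}d_j)$, and then must prove $\|L\tilde z_j\|_*\le CR^{2+\sigma}/|\log(\rho\la)|$, $\|\RR_{\la\rho}\tilde z_j\|_{L^\infty(\partial\Omega_\rho)}\le C\la\rho/|\log(\rho\la)|$, and the coercivity $|\int_{\Omega_\rho}\tilde z_j L\tilde z_j|\ge c/|\log(\rho\la)|$. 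These estimates are where the factor $|\log(\rho\la)|$ in \eqref{estphicij} actually comes from; without constructing such a function and proving these quantitative bounds, your contradiction argument does not close and the log rate is not derived. Your appeal to a Green representation of the Robin Laplacian for an ``intermediate bound'' also ignores the potential $W$, so it does not by itself produce the claimed inequality.

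There is also a normalization error in your contradiction setup: assuming $\|\phi_n\|_\infty=1$ with only $\|h_n\|_*\to 0$ targets the log-free estimate $\|\phi\|_\infty\le C\|h\|_*$, which is exactly what fails in the near-kernel direction $Z_{0j}$ (its coercivity is only of size $1/|\log(\rho\la)|$). To aim at \eqref{estphicij} you must instead assume $|\log(\rho_n\la_n)|\,\|h_n\|_*\to 0$, and then the limit procedure alone loses the quantitative information needed to rule out $\alpha_j\neq 0$; this is why the paper proceeds constructively (correct $\phi$ by $\sum_j b_j\tilde z_j$, apply Lemma~\ref{lema1}, and estimate the $b_j$ by testing against $\tilde z_j$) rather than by pure compactness. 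Fixing your proof essentially requires importing that construction, i.e.\ the content of Lemma~\ref{lema2}.
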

	
	\medskip
	
	In order to prove the above result, we require some a priori estimates. We start with the following result which can be found in Lemma 3.2 in~\cite{DT}.
	\begin{lema}\label{lema1}
		There exist $\lambda_0>0$ and $\epsilon_0>0$ such that for
		$\lambda\geq\la_0$ and $\epsilon>0$ satisfying
		$0<\rho\lambda<\e_0$, any family of points $\xi =(\xi_1,\dots,\xi_m)$ satisfying \eqref{setm}
		and for any solution $\phi$ of the problem
		\begin{eqnarray}\label{eqlin}
			\left\{ \arraycolsep=1.5pt
			\begin{array}{lll}
				\Delta \phi+W\phi=h
				\ \ &
				{\rm in}\ \Om_\rho ;\\[2mm]
				\dfrac{\partial\phi}{\partial \nu}+\rho\lambda\phi= g\ \ & {\rm on}\ \partial\Om_\rho ;\\[3mm]
			\end{array}
			\right.
		\end{eqnarray}
		satisfying the orthongonality conditions
		\begin{equation} \label{orto}
			\displaystyle\int_{\Om_\rho }\phi Z_{ij}\chi_j=0 \ \ \ \
			{\rm for}\ i=0,1,2,\ j=1,\dots,m,
		\end{equation}
		we have the estimate
		\begin{equation}\label{lluvia}
			\|\phi\|_{L^{\infty}(\Om_\rho )}\leq C(\|h\|_{\ast} + \frac{1}{\lambda \rho} \| g \|_{L^\infty(\partial \Omega_\rho)}) .
		\end{equation}
	\end{lema}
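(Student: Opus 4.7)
The plan is to argue by contradiction, following the scheme standard in this type of Lyapunov--Schmidt analysis (cf.\ \cite{bp,DT}). Assume the estimate fails: there are sequences $\e_n\to 0$, $\la_n\to+\infty$ with $\rho_n\la_n\to 0$, points $\xi^n$ satisfying \eqref{setm}, and triples $(\phi_n,h_n,g_n)$ solving \eqref{eqlin}--\eqref{orto} with $\|\phi_n\|_{L^\infty(\Om_{\rho_n})}=1$ while $\|h_n\|_*+(\rho_n\la_n)^{-1}\|g_n\|_{L^\infty(\fr\Om_{\rho_n})}\to 0$. By the scaling and \eqref{setm}, $d(\xi'_j;\fr\Om_{\rho_n})=d_j/\rho_n\asymp(\rho_n\la_n)^{-1}\to\infty$, so every concentration point recedes from the boundary in $\Om_{\rho_n}$, which makes the inner analysis unobstructed by boundary effects.

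For the inner analysis, fix $j$ and define $\tilde\phi_{n,j}(z):=\phi_n(\xi'_j+z)$ on a domain exhausting $\R^2$. Up to a subsequence, $\mu_j^n\to\mu_j\in[\delta,\delta^{-1}]$ by \eqref{muacotadas}, so $W(\xi'_j+\cdot)\to 8\mu_j^2/(\mu_j^2+|z|^2)^2$ locally uniformly, while $L\tilde\phi_{n,j}$ is pointwise dominated by $\|h_n\|_*\bigl[\sum_i(1+|z+\xi'_j-\xi'_i|)^{-2-\sigma}+\rho_n^2\bigr]\to 0$ locally uniformly. Standard elliptic estimates and diagonal extraction furnish a bounded limit $\tilde\phi_j$ solving $L_j\tilde\phi_j=0$ in $\R^2$. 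The non-degeneracy result of \cite{bp} forces $\tilde\phi_j=\sum_{i=0}^{2}c_i z_{ij}$; passing \eqref{orto} to the limit (legitimate by the compact support of $\chi$) yields $\int_{\R^2}\chi z_{ij}\tilde\phi_j=0$ for $i=0,1,2$, hence $\tilde\phi_j\equiv 0$. Consequently $\phi_n\to 0$ uniformly on compact neighborhoods of each $\xi'_j$.

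For the outer estimate I would build a barrier $\Psi$ on $\Om_{\rho_n}\sm\bigcup_j B_{R_0}(\xi'_j)$ of the form
\begin{equation*}
\Psi(y)=\sum_{j=1}^{m}(1+|y-\xi'_j|)^{-\sigma}+\rho_n^2|y-y_0|^2+A,
\end{equation*}
with $A>0$ and an interior reference $y_0\in\Om_{\rho_n}$, chosen so that $L\Psi\le -c\bigl(\sum_j(1+|y-\xi'_j|)^{-2-\sigma}+\rho_n^2\bigr)$ outside the inner balls (using that $W$ decays like $|y-\xi'_j|^{-4}$ there) and $\RR_{\rho_n\la_n}\Psi\ge c\,\rho_n\la_n$ on $\fr\Om_{\rho_n}$, whose leading contribution is the constant $A$ through the Robin zeroth-order term $\rho_n\la_n A$. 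Applying the maximum principle for the Robin problem (Lemma~2.1 of \cite{DT}) to $\pm\phi_n-C(\|h_n\|_*+(\rho_n\la_n)^{-1}\|g_n\|_\infty)\Psi$ and using the inner conclusion on $\fr B_{R_0}(\xi'_j)$ yields $\|\phi_n\|_\infty\to 0$, contradicting normalization.

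The main obstacle is the construction of $\Psi$. Since $\rho_n\la_n\to 0$, the Robin zeroth-order coefficient is weak and cannot absorb by itself the outward flux of a purely decaying profile; the additive constant $A$ generates the dominant boundary contribution $\rho_n\la_n A$, which is precisely what forces the factor $(\rho_n\la_n)^{-1}$ in front of $\|g\|_\infty$ in \eqref{lluvia}. Balancing this with the interior inequality $L\Psi\le -c(\cdots)$, consistently with the weight built into \eqref{normstar}, is the technical crux; once $\Psi$ is available the remaining steps are elliptic bootstrap, diagonal extraction, and the limit passage in the orthogonality conditions.
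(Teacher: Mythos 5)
Your overall strategy is the same as the paper's (which relies on Lemma 3.2 of \cite{DT}): an inner blow-up argument using the nondegeneracy of the linearized Liouville operator from \cite{bp} together with the orthogonality conditions \eqref{orto}, combined with an outer comparison argument in which the Robin term produces the factor $(\rho\la)^{-1}\|g\|_{L^\infty(\fr\Om_\rho)}$ in \eqref{lluvia}. The inner part of your argument is correct (the rescaled domains do exhaust $\R^2$ because $d_j/\rho\sim(\rho\la)^{-1}\to\infty$ under \eqref{setm}). The genuine gap is in the outer barrier, which you yourself single out as the crux: the function $\Psi(y)=\sum_j(1+|y-\xi'_j|)^{-\sigma}+\rho^2|y-y_0|^2+A$ cannot satisfy $L\Psi\le -c\big(\sum_j(1+|y-\xi'_j|)^{-2-\sigma}+\rho^2\big)$. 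In dimension two the profile $(1+r)^{-\sigma}$ is \emph{subharmonic} for $r>1/\sigma$, since $\Delta (1+r)^{-\sigma}=\sigma\big(\sigma-\tfrac1r\big)(1+r)^{-2-\sigma}>0$ on $r\ge R_0$ with $R_0$ large; the term $\rho^2|y-y_0|^2$ has Laplacian $+4\rho^2$; and $W\Psi\ge0$ because $\Psi>0$. Hence $L\Psi\ge0$ throughout the outer region, which is the opposite of what you need, and no choice of constants repairs it. The standard fix reverses the signs: take for instance $\Psi=A-\sum_j(1+|y-\xi'_j|)^{-\sigma}+\rho^2\big(D^2-|y-y_0|^2\big)$ with $A>m$ and $D$ comparable to $\mathrm{diam}(\Omega)/\rho$ (so the quadratic correction stays bounded); then $\Delta\Psi\le-c\big(\sum_j(1+|y-\xi'_j|)^{-2-\sigma}+\rho^2\big)$ on $|y-\xi'_j|\ge R_0$, the term $W\Psi=O\big(\sum_j(1+|y-\xi'_j|)^{-4}\big)$ is absorbed after enlarging $R_0$, and the Robin inequality $\RR_{\rho\la}\Psi\ge c\,\rho\la$ still comes from the constant $A$, since on $\fr\Om_\rho$ the normal derivatives of the other terms are only $O((\rho\la)^{1+\sigma})+O(\rho)$.

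A second, smaller issue: you invoke the Robin maximum principle of Lemma 2.1 in \cite{DT} to compare $\pm\phi$ with $C\big(\|h\|_*+(\rho\la)^{-1}\|g\|_\infty\big)\Psi$, but that lemma concerns harmonic functions, whereas here the operator is $\Delta+W$ with $W\ge0$, for which comparison is not automatic. It does hold in $\Om_\rho\sm\bigcup_jB_{R_0}(\xi'_j)$ once one exhibits a positive supersolution of $L$ there (this is exactly how the outer maximum principle is set up in \cite{DT} and in the paper); with the corrected $\Psi$ above this is available, but the step must be stated. With these two repairs your proof coincides with the paper's.
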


	\medskip
	
	The next step is to find an a priori estimate for the solution
	avoiding the elements of the kernel due to dilations
	$\chi_jZ_{0j}$'s. 
	
	\begin{lema}\label{lema2}
		There exist $\lambda_0>0$ and $\epsilon_0>0$ such that for
		$\lambda\geq\la_0$ and $\epsilon>0$ satisfying
		$0<\rho\lambda<\e_0$, any family of points $\xi =(\xi_1,\dots,\xi_m)$ satisfying \eqref{setm}
		and for any solution $\phi$ of~\eqref{eqlin} with  $g = 0$, satisfying the orthogonality conditions
		$$
		\displaystyle\int_{\Om_\rho }\phi Z_{ij}\chi_j=0 \ \ \ \
		{\rm for}\ i=1,2,\ j=1,\dots,m,
		$$
		we have
		$$\|\phi\|_{L^{\infty}(\Om_\rho )}\leq C|\log(\rho\la)|\,\|h\|_{\ast}.
		$$
	\end{lema}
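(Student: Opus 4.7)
The plan is to argue by contradiction, reducing the estimate to Lemma~\ref{lema1} by correcting $\phi$ to restore the missing orthogonality against $\chi_j Z_{0j}$. Suppose the bound fails: then there exist sequences $\epsilon_n \to 0$ and $\la_n$ with $\rho_n\la_n \to 0$, admissible configurations $\xi^{(n)} = (\xi_1^{(n)},\dots,\xi_m^{(n)})$ satisfying \eqref{setm}, data $h_n \in L^\infty(\Om_{\rho_n})$, and solutions $\phi_n$ of \eqref{eqlin} with $g=0$ satisfying $\int \phi_n \chi_j Z_{ij} = 0$ for $i=1,2$, normalized so that $\|\phi_n\|_\infty = 1$ while $|\log(\rho_n\la_n)|\,\|h_n\|_* \to 0$.

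The key construction is a bounded ``projected'' version $PZ_{0j}$ of $Z_{0j}$ compatible with the Robin condition. I would define $PZ_{0j}$ as the solution of
\begin{equation*}
\Delta(PZ_{0j}) + W \, PZ_{0j} = 0 \ \text{in} \ \Om_{\rho}, \qquad \RR_{\rho\la}(PZ_{0j} - Z_{0j}) = 0 \ \text{on} \ \partial \Om_{\rho},
\end{equation*}
or alternatively as $Z_{0j}$ minus a harmonic corrector satisfying the Robin condition. Using the maximum principle for the Robin problem (Lemma~2.1 in~\cite{DT}) together with the explicit decay of $Z_{0j}$, I would establish $\|PZ_{0j}\|_\infty \leq C$ uniformly. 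Next define
\begin{equation*}
\psi_n := \phi_n - \sum_{j=1}^m d_{nj} PZ_{0j},
\end{equation*}
with the scalars $d_{nj}$ chosen so that $\int_{\Om_{\rho_n}} \psi_n \chi_j Z_{0j} = 0$ for every $j$. Since the matrix $(\int \chi_i Z_{0i} PZ_{0j})_{ij}$ is a perturbation of $\mathrm{diag}(\|\chi_j Z_{0j}\|_{L^2}^2)$, which is uniformly invertible, we obtain $|d_{nj}| \leq C\|\phi_n\|_\infty = C$.

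Now $\psi_n$ satisfies all the orthogonality conditions of Lemma~\ref{lema1} with $g=0$, so
\begin{equation*}
\|\psi_n\|_\infty \leq C\big(\|h_n\|_* + \sum_j |d_{nj}|\,\|L(PZ_{0j})\|_*\big).
\end{equation*}
A direct computation shows $L(PZ_{0j})$ is small in $\|\cdot\|_*$ (of order $\rho^2 \la^2$ or better), since $PZ_{0j} - Z_{0j}$ is harmonic and $L Z_{0j} = 0$ in a neighborhood of $\xi_j'$. To pin down $d_{nj}$, I would test the equation $L\phi_n = h_n$ against $PZ_{0j}$ and integrate by parts; the Robin condition is satisfied by both factors, so the boundary contributions vanish, giving
\begin{equation*}
\int_{\Om_{\rho_n}} h_n PZ_{0j} = \int_{\Om_{\rho_n}} \phi_n L(PZ_{0j}).
\end{equation*}
The right-hand side, using $|\phi_n|\le 1$ and the smallness of $L(PZ_{0j})$ outside a fixed ball, is $o(1)$ plus a contribution controlling $d_{nj}$; the left-hand side is bounded by $\|h_n\|_*$ times a weighted $L^1$ norm of $PZ_{0j}$, which is of order $|\log(\rho_n\la_n)|$ because $PZ_{0j}$ does not decay uniformly and the domain $\Om_{\rho_n}$ has diameter $\sim 1/\rho_n$. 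Combining yields $|d_{nj}| \leq C|\log(\rho_n\la_n)|\|h_n\|_* \to 0$, which together with $\|\psi_n\|_\infty \to 0$ contradicts $\|\phi_n\|_\infty = 1$.

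The main obstacle is to justify rigorously the two quantitative pieces underlying the logarithmic loss: the construction and uniform control of the projected function $PZ_{0j}$ under the $\RR_{\rho\la}$ boundary operator (where $\rho\la \to 0$, so the Robin condition degenerates toward Neumann and the maximum principle must be applied carefully), and the sharp estimate of the weighted integral $\int |PZ_{0j}|(1+|y-\xi_j'|)^{-2-\sigma}$, where the balance between the slow decay of the kernel element and the weight $\|\cdot\|_*$ produces exactly one factor of $|\log(\rho\la)|$. All other steps are standard once these two ingredients are in hand.
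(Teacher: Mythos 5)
The philosophy of the argument is the same as the paper's (restore the missing orthogonality against $\chi_j Z_{0j}$ by subtracting a corrected kernel element, then invoke Lemma~\ref{lema1}), but the specific construction of the corrector has a fatal defect, and as a consequence the source of the logarithmic factor is misidentified.

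The problem is with $PZ_{0j}$. Take the second variant, $PZ_{0j} = Z_{0j} - h_j$ with $h_j$ harmonic and $\RR_{\rho\la}h_j = \RR_{\rho\la}Z_{0j}$ on $\partial\Om_\rho$. On $\partial\Om_\rho$ one has $Z_{0j} = -1 + O((\rho\la)^2)$ and $|\nabla Z_{0j}| = O((\rho\la)^3)$, so the Robin data equals $-\rho\la + O((\rho\la)^3)$; by the Robin maximum principle this forces $h_j = -1 + O((\rho\la)^2)$ uniformly in $\Om_\rho$, in particular $h_j(\xi_j') \approx -1$. Consequently
\begin{equation*}
L(PZ_{0j}) = (W - W_j)Z_{0j} - W h_j \approx W_j \quad \text{near } \xi_j',
\end{equation*}
which has size $\asymp 1$ in $\|\cdot\|_*$, not $O(\rho^2\la^2)$ as you claim. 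So the bound from Lemma~\ref{lema1} reads $\|\psi_n\|_\infty \lesssim \|h_n\|_* + \sum_j |d_{nj}|$ with no gain, and there is nothing forcing $|d_{nj}|$ to be small: testing $L\phi_n = h_n$ against $PZ_{0j}$ gives $\int \phi_n L(PZ_{0j}) = \int h_n PZ_{0j}$, but the left side is $\int \phi_n [(W-W_j)Z_{0j} - W h_j] \approx \int \phi_n W_j = O(1)$ and does not produce the $d_{nj}$'s. (The first variant, an exact bounded solution of $L(PZ_{0j})=0$ with the stated Robin condition, is not available: if it were, $L$ with homogeneous Robin data would have a nontrivial kernel, and the whole linear theory would break.)

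Your account of where the logarithm comes from is also incorrect: since $(1+|y-\xi_j'|)^{-2-\sigma}$ is integrable on $\R^2$ and $\rho^2\,\mathrm{vol}(\Om_\rho) = O(1)$, the weighted $L^1$ integral $\int (\sum_j(1+|y-\xi_j'|)^{-2-\sigma} + \rho^2)|PZ_{0j}|$ is $O(1)$, not $O(|\log(\rho\la)|)$. The paper's construction is precisely designed to fix both issues. The corrected kernel element $\tilde z_j = \eta_{1j}Z_{0j} + (1-\eta_{1j})\hat z_{2j}$ equals $Z_{0j}$ \emph{exactly} in $B_R(\xi_j')$ (so $L\tilde z_j = O(\rho^3)$ there with no pollution from the corrector), while the far-field piece $\hat z_{2j}$ carries the normalization $1/\log(2\rho^{-1}d_j)$ in front of a conformally transplanted half-space Robin Green function. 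The logarithm then enters through the \emph{lower} bound $|I_0| = |\int \tilde z_j L\tilde z_j| \gtrsim 1/|\log(\rho\la)|$, obtained from a boundary integral of $\nabla g_j \cdot \nu$ over $\{|y-\xi_j'|=R\}$ that produces $2\pi$ to leading order. Division by $I_0$ in $b_j I_0 = O(\|h\|_* + \cdots)$ is what gives $|b_j| \lesssim |\log(\rho\la)|\,\|h\|_*$. Without the cut-off construction and the $1/\log$ normalization, neither the smallness of $L\tilde z_j$ nor the $1/\log$ lower bound on $I_0$ can be recovered, and the proof does not close.
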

	
	\begin{proof}
		
		We will construct functions $\tilde z_j$ and fix constants $b_j \in \R$ such that the function
		$$
		\tilde \phi = \phi + \sum_{j=1}^m b_j \tilde z_j,
		$$
		satisfies, for certain $b_j \in \R$, the orthogonality conditions with respect to the dilations, and subsequenly apply Lemma~\ref{lema1}.

		We divide the proof in several steps:

		\medskip
		
		\noindent
		\textsl{1.- Construction of the correctors $\tilde z_j$:} Here we require certain estimates concerning the Green function with homogeneous Robin boundary condition in the upper half-space, that is
		\begin{equation*}
			-\Delta G(\cdot, y) = \delta_{y} \quad \mbox{in} \ \R^2_+; \qquad \RR_{a} G = 0 \quad \mbox{on} \ \{  x_2 = 0 \},
		\end{equation*}
		where $a > 0$, and in this case $\nu = -e_2$ is the exterior unit normal. As it can be seen in p. $120$ in~\cite{GT}, we have the expression
		\begin{equation}\label{G}
			G_a(x,y) = \Gamma(x - y) - \Gamma(x - y^*) - 2 \int_{0}^{+\infty} \frac{e^{-a s} (x_2 + s + y_2)}{|x + s e_2- y^*|^2} ds,
		\end{equation}
		and where for $y = (y_1, y_2) \in \R^2_+$, we denote $y^* = (y_1, -y_2)$.
		
		Let $F_j: B_r(\xi_j) \cap \Omega \to \R_+^2$ be a conformal mapping such that if we denote $\hat \xi_j \in \partial \Omega$ is the projection of $\xi_j$ onto the boundary, then $F_j(\hat \xi_j) = 0$. In addition, $DF_j(\hat \xi_j)$ is a rotation (making $DF_j(\hat \xi_j) \nu(\hat \xi_j) = -e_2$), and is such that $F_j (B_r(\xi_j) \cap \partial \Omega)$ is an interval in $\{ x : x_2 = 0 \}$. Thus,  we consider its expanded version
		$$
		F_{j, \rho}(y) = \rho^{-1} F_j(\rho y), \quad y \in B_{r/\rho}(\xi_j') \cap \Omega_\rho.
		$$ 
		
		Since $F_j$ is a conformal mapping, we have the existence of a constant $c \in (0,1)$ (just depending on $\Omega$) such that 
		\begin{equation}\label{conforme}
			c |z_1 - z_2| \leq |F_{j, \rho}(z_1) - F_{j, \rho}(z_2)|  \leq c^{-1} |z_1 - z_2|,
		\end{equation}
		for all  $z_1, z_2 \in B_{r/\rho}(\xi_j') \cap \Omega_\rho$.
		
		Taking into account that $\mathrm{dist}(\xi_j, \partial \Omega) = O(\lambda^{-1})$, we have 
		\begin{equation}\label{cotaF'}
			F_{j,\rho}(\xi_j') = \rho^{-1} d_j e_2 + O(\rho^{-1} \lambda^{-2}).
		\end{equation}

		Let $\eta_{2j}$ be a cut-off function making $\eta_{2j} = 1$ in $B_{\frac{r}{2\rho}}(\xi_j')$, $\eta_{2j} = 0$ in $B_{\frac{r}{\rho}}^c(\xi_j')$, $|D\eta_{2j}| \leq C \rho$, $|D^2 \eta_{2j}| \leq C \rho^2$ in $\R^2$, and such that $\frac{\partial \eta_{2j}}{\partial \nu} = 0$ in $\partial \Omega_\rho$. This can be done using the conformal map $F_j$ and an adequate scaling.
		
		Finally, we consider a cut-off function $\eta_{1j}$ such that $\eta_{1j} = 1$ in $B_R(\xi_j')$, $\eta_{1j} = 0$ in $B_{R + 1}(\xi_j')$ and uniform bounds for its first and second-order derivatives.
		
		For $y \in \Omega_\rho$, we define
		$$
		\hat z_{2j}(y) = \eta_{2, j}(y)\frac{1}{\log(2\rho^{-1} d_j)} Z_{0j}(y) g_j(y),
		$$
		where, for simplicity, we have denoted 
		\begin{equation}\label{defgj}
			g_{j}(y) = G_{\lambda \rho}(F_{j,\rho}(y), F_{j,\rho}(\xi_j')).
		\end{equation}

		Thus, we define 
		\begin{align}\label{aprox}
			\tilde z = \sum_j b_j \tilde z_j, \quad \mbox{with} \quad \tilde z_j= \eta_{1j} Z_{0j} + (1 - \eta_{1j}) \hat z_{2j}, 
		\end{align}
		with $b_j \in \R$ given by the expression
		\begin{equation}
			b_j \int_{\Omega_\rho} \chi_j |Z_{0j}|^2 + \int_{\Omega_\rho} \chi_j Z_{0j} \phi = 0.
		\end{equation}

		\medskip

		\noindent
		\textsl{2.- Application of the Lemma~\ref{lema1}:} 
		By linearity
		$$
		L \tilde \phi = h + L \tilde z \ \mbox{in} \ \Omega_\rho; \quad \RR_{\lambda \rho} \tilde \phi = \RR_{\lambda \rho} \tilde z \ \mbox{on} \ \partial \Omega_\rho,
		$$
		and by the choice of $b_j$, we are in position to use Lemma~\ref{lema1}.
		Then, we see that
		$$
		\| \tilde \phi \|_\infty \leq C \Big{(} \| h \|_* + \sum_j |b_j| ( \| L \tilde z_j \|_* + \frac{1}{\lambda \rho} \| \RR_{\lambda \rho} \tilde z_j \|_{L^\infty(\partial \Omega_\rho)}) \Big{)}, 
		$$
		from which we get
		\begin{equation}\label{sol}
			\|  \phi \|_\infty \leq C \Big{(} \| h \|_* + \sum_j |b_j| (\| \tilde z_j\|_\infty + \| L \tilde z_j \|_* + \frac{1}{\lambda \rho} \| \RR_{\lambda \rho} \tilde z_j \|_{L^\infty(\partial \Omega_\rho)}) \Big{)}.
		\end{equation}
		
		Now we estimate each term in the right-hand side above, stating with the terms concerning $\tilde z_j$. 
		
		\medskip
		
		\noindent
		\textsl{3.- Estimates for $g_j$ in~\eqref{defgj}:} We start with some estimates concerning the function $g_j$, consequence of~\eqref{G} and the properties of the conformal map $F_j$.
		
		First, by~\eqref{cotaF'}, for all $y \in \Omega_\rho$ with $|y -\xi_j'| \geq R$, we have
		\begin{equation*}
			\left|\int_{0}^{+\infty} e^{-\lambda \rho s}\frac{(F_{j, \rho}(y))_2 + s + (F_{j, \rho}(\xi_j'))_2}{|F_{j, \rho}(y) + s e_2- F_{j, \rho}(\xi_j')^*|^2} ds \right|\leq \int_{0}^{+\infty} \frac{\rho e^{-\lambda \rho s} ds}{|s + d_j + O(\lambda^{-2})|} \leq C,
		\end{equation*}
		for some $C > 0$. Using~\eqref{conforme},~\eqref{cotaF'}, we have
		\begin{align*}
			& \Gamma(F_{j, \rho}(y) - F_{j,\rho}(\xi_j')) = O(\log|y - \xi_j'|),
		\end{align*}
		and
		\begin{equation*}
			\Gamma(F_{j, \rho}(y) - F_{j,\rho}(\xi_j')^*) = \left \{ \begin{array}{ll} \log(2\rho^{-1}d_j) + O(R \lambda \rho) \quad & \mbox{if} \ |y - \xi_j'| \leq R + 1, \\ O(\log |y - \xi_j'|) + \log(2 d_j \rho^{-1}) \quad & \mbox{if} \ |y - \xi_j'| > R + 1, \end{array} \right .
		\end{equation*}
		where the $O$ terms are independent of $\epsilon, \lambda$. Moreover, in the particular case $y \in \partial \Omega_\rho$, since $F_{j,\rho}(y)$ belongs to $\partial \R_+^2$, there exists $C > 0$ such that
		\begin{equation}\label{ginfty}
			\left| \frac{g_j}{\log(2d_j\rho^{-1})}\right|_{L^\infty(\Omega_\rho \setminus B_{R}(\xi_j'))} \leq C.
		\end{equation}
		
		Additionally, a direct computation leads us to
		\begin{align*}
			\nabla g_j(y) = & O(|y - \xi_j'|^{-1}) + O(|F_{j,\rho}(y) - F_{j, \rho}(\xi_j')^*|^{-1}) \\
			& + O(1) \int_{0}^{+\infty} e^{-\lambda \rho s} \frac{ds}{|se_2 - F_{j, \rho}(\xi_j')^*|^2} \\
			= & O(|y - \xi_j'|^{-1}) + O(\rho \lambda).
		\end{align*}
		
		In fact, we have that for $|y - \xi_j'| = R$, using that $F_j(\hat \xi_j)$ is a rotation, the following expansion takes place
		\begin{equation}\label{nablagj}
			\nabla g_j(y) = \frac{y - \xi_j'}{|y - \xi_j'|^2} \Big{(} 1 + O(\lambda^{-1}) \Big{)} + O(\rho \lambda). 
		\end{equation}
		
		\medskip
		
		Now we look for the terms involving $\tilde z_j$ in~\eqref{sol}. By the previous discussion, it is easy to see the existence of $C > 0$ not depending on $\epsilon, \lambda$ such that
		\begin{equation}\label{zinfty}
			\| \tilde z_j \|_\infty \leq C.
		\end{equation}
		
		\medskip
		
		Now we deal with $\| L \tilde z_j\|_*$. It is clear that $L\tilde z_j = O(\rho^3)$ in $B_R(\xi_j')$. 
		
		Thus, we concentrate on the case $|y - \xi_j'| \geq R$. With the above estimates, for $R \leq |y - \xi_j'| \leq R+1$ (recalling that since $F$ is conformal we have $g_j$ is harmonic there) we have
		\begin{align*}
			L \tilde z_j(y) = & L \left(\eta_1 Z_{0j} \left(1 - \frac{g_j}{\log(2 d_j \rho^{-1})}\right) \right)(y) + L \hat z_j(y) \\
			= & O(\rho^3) + O\left(\frac{1}{|\log (\lambda \rho)|}\right) %+ O(\frac{1}{|\log (\lambda \rho)|}).
		\end{align*}
		
		For $|y - \xi_j'| \geq R + 1$, we see that
		\begin{align*}
			L \tilde z_j (y) = O \Big{(} \frac{\log|y - \xi_j'|}{|y - \xi_j'|^3 |\log(\lambda \rho)|} \Big{)}.
		\end{align*}

		From which we conclude 
		\begin{equation}\label{Lz*}
			\| L\tilde z_j\|_* \leq \frac{C R^{2 + \sigma}}{|\log(\lambda \rho)|},
		\end{equation}
		for some $C > 0$. 
		
		\medskip
		
		Now we concentrate on $\RR_{\lambda \rho} \tilde z_j$. Using the properties of $\eta_{2j}$, we have
		\begin{equation}\label{Rz}
			\RR_{\lambda \rho} \tilde z_j = \frac{1}{|\log(\lambda \rho)|} \eta_{2j} \Big{(}  Z_{0j} \RR_{\lambda \rho} g_j +  \frac{\partial Z_{0j}}{\partial \nu} g_j \Big{)} \quad \mbox{in} \ \partial \Omega_\rho.
		\end{equation}
		
		Notice that $\RR_{\lambda \rho} \tilde z_j(y) = 0$ for $|y - \xi_j'| \geq r/\rho$, so we concentrate on the analysis of $y \in \partial \Omega_\rho$ with $|y - \xi_j'| < r/\rho$. For simplicity, we denote this region as $A_j$. We have $g_j$ is uniformly bounded on $\partial \Omega_\rho$.  In fact, notice that if $y \in \partial \Omega_\rho$, we have
		$|F_{j, \rho}(y) - F_{j, \rho}(\xi_j')| = |F_{j, \rho}(y) - F_{j, \rho}(\xi_j')^*|$ and thus, using that $F_{j \rho}(\xi_j')_2 \geq c/(\lambda \rho)$ we have
		$$
		|g_j(y)| \leq C \int_{0}^{+\infty} \frac{e^{-\lambda \rho s}}{|s e_2 - (F_{j\rho}(\xi_j'))^*|} ds \leq C.
		$$
		
		Using this and the explicit formula for $Z_{0j}$, we conclude that
		\begin{equation*}
			\RR_{\lambda \rho} \tilde z_j = \frac{\eta_{2j}}{|\log(\lambda \rho)|} (O(1) \RR_{\lambda \rho} g_j + O(\lambda \rho)).
		\end{equation*}
		
		Noticing that
		\begin{align*}
			\RR_{\lambda \rho} g_j(y) 
			& = c \left(-\frac{\partial G}{\partial x_2}(F_{j\rho}(y), F_{j\rho}(\xi_j')\right)  + \lambda \rho G(F_{j\rho}(y), F_{j\rho}(\xi_j')) \\
			& = \lambda \rho (c - 1) g_j(y),
		\end{align*}
		where $c = 1 + O(\rho y)$ is the conformal factor of the map $F$. Thus, for each $y \in \partial \Omega_\rho$ with $|y - \xi_j'| \leq r/\rho$, we get
		\begin{equation}\label{cotaRg}
			\RR_{\lambda \rho} g_j(y) = O(\rho y) \lambda \rho g_j(y),
		\end{equation}
		and from here, replacing in~\eqref{Rz}, we conclude that
		\begin{equation}\label{Rginfty}
			\| \RR_{\lambda \rho} \tilde z_j \|_\infty \leq C \frac{\lambda \rho}{|\log(\lambda \rho)|}.
		\end{equation}
		
		In particular, using this estimate together with~\eqref{Lz*} and replacing them into~\eqref{lluvia} we have
		\begin{equation}\label{lluvia2}
			\|  \tilde \phi \|_\infty \leq C (\| h \|_* + \frac{1}{|\log(\lambda \rho)|}\sum_j |b_j|),
		\end{equation}
		meanwhile, using~\eqref{zinfty},~\eqref{Lz*} and~\eqref{Rginfty} and replacing them into~\eqref{sol}, we get
		\begin{equation}\label{sol2}
			\| \phi \|_\infty \leq C (\| h \|_* + \sum_{j} |b_j|),
		\end{equation}
		where in both inequalities the constant $C > 0$ depends on $R$, but not on $\lambda$ nor $\epsilon$.

		\medskip
		
		\noindent
		\textsl{4.- Estimate for $b_j$:} Using the equation for $\tilde \phi$, multiplying by $\tilde z_j$ and integrating by parts, we conclude that 
		\begin{equation}\label{testb}
			\begin{split}
				b_j \int_{\Omega_\rho} \tilde z_j L \tilde z_j & =  \int_{\Omega_\rho} \tilde \phi L \tilde z_j  - \int_{\partial \Omega_\rho} \tilde \phi \RR_{\lambda \rho} \tilde z_j + b_j \int_{\partial \Omega_\rho} \tilde z_j \RR_{\lambda \rho} \tilde z_j - \int_{\Omega_\rho} h \tilde z_j \\
				& =:  I_1 + I_2 + b_j I_3 + I_4.
			\end{split}
		\end{equation}
		for each $j$. Now we proceed to estimate each term.
		
		It is direct to see that
		\begin{equation*}
			|I_4| \leq C \| h \|_*, 
		\end{equation*}
		and using~\eqref{Lz*} and~\eqref{lluvia2}, we get
		\begin{equation*}
			|I_1| \leq C \| \tilde \phi \|_\infty R^{-\sigma} \| L \tilde z_j \|_* \leq 
			C \frac{R^2}{|\log(\lambda \rho)|} (\| h \|_* + \frac{R^{2 + \sigma}}{|\log(\lambda \rho)|}\sum_j |b_j|).
		\end{equation*}

		Now, for $I_2$ we make
		\begin{equation*}
			I_2 \leq \frac{C}{|\log(\lambda \rho)|} \| \tilde \phi\|_\infty \int_{\partial \Omega_\rho} |\RR_{\lambda \rho} g_j| =: \frac{C}{|\log(\lambda \rho)|}\| \tilde \phi\|_\infty \tilde I_2.
		\end{equation*}
		
		By~\eqref{cotaRg} we have
		\begin{equation*}
			\tilde I_2 
			\leq  C \lambda \rho \int_{A_j} |g_j(y)| ds(y),
		\end{equation*}
		and using the explicit formula~\eqref{G}, we get that
		\begin{align*}
			\tilde I_2 \leq & C \lambda \rho\int_{A_j} \int_{0}^{+\infty} \frac{e^{-\lambda \rho s} |s + F_{j\rho}(\xi_j')_2|}{|F_{\lambda \rho}(y) + se_2 - F_{j\rho}(\xi_j')^*|^2} ds ds(y).
		\end{align*}
		
		From here, using the estimate $F_{j\rho}(\xi_j') = \frac{1}{\lambda \rho}(O(\frac{1}{\lambda}), 1 + O(\frac{1}{\lambda}))$, by a change of variables, we can find universal constants $C, c > 0$ such that
		\begin{align*}
			\tilde I_2 \leq & C \lambda \rho \int_{0}^{1/\rho} \int_{0}^{+\infty} \frac{e^{-c \lambda \rho s} |s + \frac{1}{\lambda \rho}|}{t^2 + (s + \frac{1}{\lambda \rho})^2} ds dt, 
		\end{align*}
		by taking $\lambda$ suitably large. Thus, by algebraic manipulations, we conclude that
		$
		\tilde I_2 \leq C
		$
		for some $C > 0$ not depending on $R, \epsilon, \lambda$.
		From here conclude that
		\begin{equation*}
			|I_2| \leq \frac{C}{|\log(\lambda \rho)|} (\| h \|_* + \frac{R^{2 + \sigma}}{|\log(\lambda \rho)|}\sum_j |b_j|),
		\end{equation*}
		and by similar estimates, we conclude
		\begin{equation*}
			|I_3| \leq \int_{\partial \Omega_\rho} | \tilde z_j \RR_{\lambda \rho} \tilde z_j| ds(y) = O \Big{(} \frac{1}{|\log(\lambda \rho)|^2} \Big{)}.
		\end{equation*}
		
		Collecting the previous estimates and replacing them int~\eqref{testb}, we conclude that
		\begin{equation}\label{bjI0}
			b_j I_0 = O(1) \Big{(} \| h \|_* + \frac{1}{|\log(\lambda \rho)|^2 } \sum_j |b_j| \Big{)},
		\end{equation}
		where $I_0 := \int_{\Omega_\rho} \tilde z_j L \tilde z_j$ and $O(1)$ depends on $R$, but not on $\epsilon, \lambda$.
		
		We claim that
		\begin{equation}\label{I0}
			|I_0| \geq \frac{c}{|\log(\lambda \rho)|} 
		\end{equation}
		for some $c > 0$ independent of $R, \lambda$ and $\epsilon$. If we assume this is 
		true, we replace this into~\eqref{bjI0} and by taking $\lambda \rho$ small enough in terms of $R$, we arrive at
		$$
		|b_j| \leq C |\log(\lambda \rho)| \| h \|_*,
		$$
		and replacing this into~\eqref{sol2} we have been proved the lemma.
		Thus, it remains to get the
		
		\medskip
		\noindent
		{5.- Proof of estimate~\eqref{I0}:} We divide the integral in the regions $|y - \xi_j'| \leq R, R < |y - \xi_j'| \leq R + 1$, and $R+1 < |y - \xi_j'|$. Then, using the estimates~\eqref{ginfty}, the gradient of $g_j$,~\eqref{zinfty}, and the fact that for each $i \neq j$ we have $|y - \xi_i'| \geq r/(2\rho)$ in the support of $\eta_{2j}$, we have  
		\begin{equation}\label{I00}
			I_0 = O(R^2\lambda^4 \rho^4) + I_{01} + O\left(R^{-3} \frac{1}{|\log(\lambda \rho)|}\right),
		\end{equation}
		where 
		$$
		I_{01} = \int_{B_{R+1}(\xi_j') \setminus B_R(\xi_j')} L\tilde z_{0j} \tilde z_{0j}. 
		$$
		
		Writing $\tilde z_{0j} = \eta_{1,j}(Z_{0j} - \hat z_{0j}) + \hat z_{0j}$ valid in the annular region $A_R = \{ y: R < |y - \xi_j'| < R + 1 \}$, we can write
		\begin{align*}
			I_{01} =&  \int_{A_R} \Big{(} \Delta \eta_{1j} (Z_{0j} - \hat z_{0j}) + 2 \nabla \eta_{1j} \nabla (Z_{0j} - \hat z_{0j}) \Big{)} \tilde z_{0j} \\
			& + \int_{A_R} \eta_{1j} L Z_{0j} \tilde z_{0j} + \int_{A_R} (1 - \eta_{1j}) L \hat z_{0j} \tilde z_{0j}.
		\end{align*}
		
		By similar arguments as in~\eqref{I00}, we have
		\begin{align*}
			I_{01} = &  \int_{A_R} \Big{(} \Delta \eta_{1j} (Z_{0j} - \hat z_{0j}) + 2 \nabla \eta_{1j} \nabla (Z_{0j} - \hat z_{0j}) \Big{)} \tilde z_{0j} + O\left(R^{-3} \frac{1}{|\log(\lambda \rho)|}\right) \\
			=: & I_{02} + O\left(R^{-3} \frac{1}{|\log(\lambda \rho)|}\right).
		\end{align*}
		
		Then, for $I_{02}$, we integrate by parts and using that $(1 - g_j/\log(2d_j \rho^{-1})) = O(\frac{\log(R)}{|\log(\lambda \rho)|})$ in $A_R$, we can get
		\begin{align*}
			I_{02}  = & -\int_{|y - \xi_j'| = R} \tilde z_{0j} \nabla (Z_{0j} - \hat z_{0j}) \nu \\
			&  - \int_{A_R} \eta_{1j}(\Delta(Z_{0j} - \hat z_{0j}) \tilde z_{0j} + \nabla (Z_{0j} - \hat z_{0j}) \nabla \tilde z_{0j}) \\
			& - \int_{A_R} \nabla \eta_{1j} \nabla \tilde z_{0j} (Z_{0j} - \hat z_{0j}) \\
			=: & -I_{03} + O\left(\frac{R^{-2}}{|\log(\lambda \rho)|}\right) + O\left(\frac{\log^2(R)}{|\log(\lambda \rho)|^2}\right)+ O\left(\frac{R^2 \log^2(R)}{|\log(\lambda \rho)|^2}\right).
		\end{align*}
		
		For $I_{03}$ we have
		\begin{align*}
			-I_{03} = & -\int_{|y - \xi_j'| = R} \tilde z_{0j} (1 - g_j/\log(2d_j\rho^{-1})) \nabla Z_{0j} \nu \\ & + \frac{1}{\log(2d_j\rho^{-1})}\int_{|y - \xi_j'| = R} Z_{0j}^2 \nabla g_j \nu \\
			= & O\left(\frac{R^{-2} \log(R)}{\log(\lambda \rho)}\right) + \frac{Z_{0j}^2(R)}{\log(2 d_j \rho^{-1})}  \int_{|y - \xi_j'| = R} \nabla g_j \nu,
		\end{align*}
		from which, using the expansion~\eqref{nablagj} we arrive at
		\begin{equation*}
			I_{03} = -\frac{2 \pi Z_{0j}^2(R)}{\log(2 d_j \rho^{-1})} (1 + O(\lambda^{-1})) + O\left(\frac{R^{-2} \log(R)}{\log(\lambda \rho)}\right).
		\end{equation*}

		From here, taking $R, \lambda$ large enough we conclude
		$$
		|I_0| \geq \frac{\pi}{\log(2d_j \rho^{-1})} + O\left(\frac{R^{-2} \log(R)}{\log(\lambda \rho)}\right),
		$$
		from which~\eqref{I0} follows by fixing $R$ large enough independent of $\lambda, \rho$.
	\end{proof}
	
	Now we are in position to provide the
	\noindent
	\begin{proof}[Proof of Proposition~\ref{inliop}]
		Consider the Hilbert space
		$$
		H = \{ u \in H^1(\Omega_\rho) : \displaystyle\int_{\Omega_\rho } \chi_j Z_{ij}\phi=0\ \ \mbox{for}\ i=1,2,\ j=1,\dots,m \}
		$$
		endowed with the inner product
		$$
		\langle u, v \rangle_H = \int_{\Omega_\rho} \nabla u \cdot \nabla v + \lambda \rho \int_{\partial \Omega_\rho} uv.
		$$
		
		For each $\ell \in H^*$, we consider the map $R: H^* \to H$, given by the Riesz Representation Theorem, as 
		$$
		\langle R \ell, \varphi \rangle = \ell (\varphi), \quad \mbox{for all} \ \varphi \in H,
		$$
		and with this, we denote the operators
		$
		K_1, K_2 : H \to H,
		$
		given by
		$$
		K_1 (f) = R (f), \quad K_2(f) = R(Wf), \quad f \in H,
		$$
		where we have performed the identification $f \in L^2(\Omega_\rho)$ with the map $\varphi \mapsto \int_{\Omega_\rho} f \varphi \in H^*$. In fact, because of this identification we can assume that both operators are compact.
		
		For $h \in L^\infty(\Omega_\rho) \cap H^1(\Omega_\rho)$, let $c_{ij}$ the unique constants such that
		$$
		\tilde h := h + \sum_{ij} c_{ij} \chi_j Z_{ij} \in H.
		$$ 
		
		A simple computation tells us that for each $i, j$ we have
		$$
		\frac{-\int_{\Omega_\rho} h \chi_j Z_{ij}}{\int_{\Omega_\rho} \chi_j Z_{ij}^2} = c_{ij},
		$$
		from which, there exists $C_R > 0$ such that
		$$
		|c_{ij}| \leq C_R \| h \|_*
		$$
		
		The solvability of the problem is equivalent to the existence of a function $\phi \in H$ such that
		$$
		\phi - K_2(\phi) = K_1 \tilde h,
		$$1
		and by Fredholm alternative, we have a unique solution to this equation if the homogeneous problem
		$$
		\phi - K_2(\phi) = 0,
		$$
		has no nontrivial solutions. This is the case in view  of the estimates of Lemma~\ref{lema2} this is the case, from which the well-posedness of~\eqref{eq:3.5}. By the estimates for $c_{ij}$, we conclude that the solution $\phi$ to this problem satisfies the estimate~\eqref{estphicij}.
		
		For~\eqref{estdphi}, by standard arguments we have $\phi_{kl} := \partial_{(\xi_l)_k} \phi$ satisfies 
		$$
		L \phi_{kl} = - \frac{\partial W}{\partial (\xi_l)_k} \phi + \frac{\partial (c_{kl} \chi_l Z_{kl})}{\partial (\xi_l)_k}  \quad \mbox{in} \ \Omega_\rho,
		$$
		together with homogeneous Robin boundary conditions. In order to use the estimates in Lemma~\ref{lema2} we consider a function with the form
		$$
		\phi_{kl} + \tilde c_{kl} \chi_l Z_{kl},
		$$
		with $\tilde c_{kl}$ such that the orthogonality condition with respect to $Z_{kl}$ is satisfied. We omit the details.
	\end{proof}

	\section{Nonlinear Problem}\label{nonlinear}

	\begin{prop}\label{nlp}
		There exist $\lambda_0>0$ and $\epsilon_0>0$ such that for all $\lambda\geq\la_0$ and $\epsilon>0$ with $0<\rho\lambda^9\log\lambda<\e_0$ and for any points $\xi=(\xi_1,\dots, \xi_m)$ satisfying \eqref{setm}, the problem of finding a function $\phi$ and coefficients $c_{ij}(\xi)\in\R$, $i=1,2$, $j=1,\dots,m$ satisfying
		\begin{equation}\label{nline1}
			\begin{cases}
				\ds L(\phi)=-( R + \Lambda(\phi) + N(\phi))+\sum_{i=1}^2\sum_{j=1}^m c_{ij}\chi_j Z_{ij} &\mbox{in }\Omega_\rho\\ 
				\ds \frac{\partial\phi}{\partial\nu}+\rho\lambda\phi=0, &\mbox{on }\partial\Omega_\rho\\[0.4cm]
				\ds \int_{\Omega_\rho } \chi_jZ_{ij} \phi=0\ \ \ \  {\rm
					for}\ i=1,2,\ j=1,\dots,m,
			\end{cases}
		\end{equation}
		has a unique solution $\phi$ and scalars $c_{ij}$,
		$i=1,2$,\ $j=1,2,\dots,m$ satisfying
		$$
		\|\phi\|_{L^{\infty}(\Omega_\rho )}\leq C\rho\la^9\log\lambda |\log(\rho\la)|,\qquad |c_{ij}| \leq C\rho\la^9\log\lambda,
		$$
		where $L$, $R$, $\Lambda(\phi)$ and $N(\phi)$ are given by \eqref{L}, \eqref{R}, \eqref{lam} and \eqref{N} respectively. Moreover, the map
		$\xi'\mapsto\phi$ into the space $C(\bar{\Omega}_\rho)$, the
		derivative $D_{\xi'}\phi$ exists and defines a continuous function
		of $\xi'$, and there is a constant $C>0$, such that
		\begin{eqnarray}\label{cotadphi}
			\|D_{\xi'}\phi\|_{L^{\infty}(\Omega_\rho )}\leq
			C\rho\la^9\log\lambda|\log(\rho\la)|^2.
		\end{eqnarray}
	\end{prop}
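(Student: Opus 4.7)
The plan is to solve \eqref{nline1} by a standard contraction mapping argument built on top of the linear theory of Proposition~\ref{inliop}. Denote by $T_\la$ the linear solution operator provided there, and define
$$
\mathcal{A}(\phi) := -T_\la\bigl(R + \Lambda(\phi) + N(\phi)\bigr),
$$
so that fixed points of $\mathcal{A}$ in the space
$$
\mathcal{F} = \Big\{\phi\in L^\infty(\Om_\rho)\cap H^1(\Om_\rho):\ \int_{\Om_\rho}\chi_j Z_{ij}\phi=0,\ i=1,2,\ j=1,\dots,m\Big\}
$$
are exactly the solutions to \eqref{nline1} together with automatically determined coefficients $c_{ij}$. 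I would work in the closed ball
$$
B_r=\{\phi\in\mathcal{F}:\ \|\phi\|_{L^\infty(\Om_\rho)}\le r\},\qquad r:=K\rho\la^9\log\la\,|\log(\rho\la)|,
$$
for a large constant $K$ to be fixed, and show $\mathcal{A}:B_r\to B_r$ is a contraction.

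The key ingredients are the $\|\cdot\|_*$-estimates for each of the three pieces hitting $T_\la$. For $R$ and $\Lambda(\phi)$, Lemma~\ref{rest} already gives
$$
\|R\|_* \le C\rho\la^9\log\la,\qquad \|\Lambda(\phi)\|_*\le C\rho\la^9\log\la\,\|\phi\|_\infty.
$$
For $N(\phi)$, using $e^\phi-\phi-1=O(\phi^2)$ and $e^{-\phi}+\phi-1=O(\phi^2)$ uniformly for $|\phi|\le 1$, and recalling from \eqref{lam} that $(\e\rho)^2(e^V+e^{-V})=W+O(\Lambda\text{-coefficient})$, a case analysis analogous to that in the proof of Lemma~\ref{rest} yields
$$
\|N(\phi)\|_*\le C\|\phi\|_\infty^2,\qquad \|N(\phi_1)-N(\phi_2)\|_*\le C(\|\phi_1\|_\infty+\|\phi_2\|_\infty)\|\phi_1-\phi_2\|_\infty,
$$
provided $\|\phi\|_\infty,\|\phi_i\|_\infty\le 1$. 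Combining with the operator bound $\|T_\la(h)\|_\infty\le C|\log(\rho\la)|\,\|h\|_*$ from Proposition~\ref{inliop}, we obtain for $\phi\in B_r$
$$
\|\mathcal{A}(\phi)\|_\infty\le C|\log(\rho\la)|\Bigl(\rho\la^9\log\la+\rho\la^9\log\la\cdot r+r^2\Bigr),
$$
and under the smallness hypothesis $\rho\la^9\log\la\le\e_0$ (which makes both $\rho\la^9\log\la\cdot|\log(\rho\la)|$ and $r|\log(\rho\la)|$ small), the right-hand side is bounded by $r$ once $K$ is chosen large. The same computation applied to $\mathcal{A}(\phi_1)-\mathcal{A}(\phi_2)$ produces a factor $o(1)$ in front of $\|\phi_1-\phi_2\|_\infty$, hence $\mathcal{A}$ is a contraction on $B_r$. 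Banach's fixed point theorem delivers a unique $\phi\in B_r$ solving \eqref{nline1}, with
$$
\|\phi\|_\infty\le C\rho\la^9\log\la\,|\log(\rho\la)|,\qquad |c_{ij}|\le C\|R+\Lambda(\phi)+N(\phi)\|_*\le C\rho\la^9\log\la,
$$
the last bound coming from the coefficient estimate in \eqref{estphicij}.

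For the differentiability in $\xi'$, I would formally differentiate the fixed-point identity $\phi=\mathcal{A}(\phi)$ with respect to $(\xi_l)_k$. The resulting equation for $\psi_{kl}=\partial_{(\xi_l)_k}\phi$ takes the form
$$
L\psi_{kl}=-\partial_{(\xi_l)_k}(R+\Lambda(\phi)+N(\phi))+\sum_{i,j}\tilde c_{ij}\chi_j Z_{ij}+\text{correction terms from }\partial_{(\xi_l)_k}\chi_j Z_{ij},
$$
with suitable orthogonality restored by adding multiples of $\chi_l Z_{kl}$. Invoking \eqref{estdphi} (which carries the extra $|\log(\rho\la)|$) together with the already-obtained $L^\infty$-bound on $\phi$, one closes the estimate
$$
\|D_{\xi'}\phi\|_\infty\le C\rho\la^9\log\la\,|\log(\rho\la)|^2
$$
via the same contraction-style argument, or equivalently via the implicit function theorem applied to the $C^1$ map $(\xi',\phi)\mapsto\phi-\mathcal{A}(\phi)$. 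The main technical point throughout is the quadratic estimate on $N(\phi)$ in the $\|\cdot\|_*$-norm, as the exponential non-linearity $e^{\pm V}$ must be balanced delicately against the weight $(1+|y-\xi_j'|)^{-2-\sigma}$ both in the inner concentration zone and in the transition annulus $\de/(\rho\la\log\la)\le |y-\xi_j'|\le\de/\rho$, exactly as done for $R$ in Lemma~\ref{rest}.
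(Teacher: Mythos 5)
Your proposal is correct and follows essentially the same strategy as the paper's proof: set up the fixed-point operator $\mathcal{A}=-T_\la(R+\Lambda(\cdot)+N(\cdot))$, verify it maps a ball of radius comparable to $\rho\la^9\log\la\,|\log(\rho\la)|$ into itself and is a contraction there using Lemma~\ref{rest} and the linear estimates of Proposition~\ref{inliop}, and then obtain $C^1$ dependence on $\xi'$ by differentiating the fixed-point identity via the implicit function theorem. The quadratic bound $\|N(\phi)\|_*\le C\|\phi\|_\infty^2$, which you rightly flag as the technical crux, is precisely the point the paper also relies on, using the same observation that $(\e\rho)^2(e^V+e^{-V})=W+O(\rho\la^9\log\la)$ in $\|\cdot\|_*$-norm.
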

	
	\begin{proof}
		Observe that in terms of the operator $T$ problem \eqref{nline1} becomes
		\begin{equation}\label{aphi}
			\phi=T\lf(-[R+\Lambda(\phi)+N(\phi)]\rg):=\mathcal{A}(\phi),
		\end{equation}
		where $T$ is the continuous linear map defined on the set of all $h\in L^\infty(\Om_\rho)$ satisfying $\|h\|_*<+\infty$, so that $\phi=T(h)$ correspond to the unique solution of the problem \eqref{eq:3.5}. For a given number $\nu>0$, let us consider
		$$
		\ml{F}_\nu = \{\phi\in C(\bar\Om_\rho) : \| \phi \|_\infty \le
		\nu \rho\la^9\log\lambda|\log(\rho\la)|\}
		$$
		From the Proposition \ref{inliop}, we get
		\begin{equation*}
			\begin{split}
				\|\ml{A}(\phi)\|_\infty & \le C | \log (\rho\la) |\lf[ \|R \|_*+ \| \Lambda(\phi) \|_* + \|N(\phi)\|_*\rg].
			\end{split}
		\end{equation*}
		From Lemma \ref{rest} it follows the estimate $\| R \|_*\le C\rho\la^9\log\lambda$. Furthermore,
		\begin{equation*}
			\| \Lambda(\phi)\|_*\le C\rho\la^9\log\lambda \|\phi\|_\infty\quad\text{and}\quad \| N (\phi) \|_* \le C \|\phi\|_\infty^2
		\end{equation*}
		Hence, we get for any $\phi\in \ml{F}_\nu$,
		\begin{equation*}
			\begin{split}
				\|\ml{A}(\phi)\|_\infty & \le C \rho\la^9\log\lambda |\log(\rho\la)| \lf[1 + \nu\rho\la^9 \log\lambda |\log(\rho\la)| + \nu^2\rho\la^9\log\lambda |\log(\rho\la)|^2\rg].
			\end{split}
		\end{equation*}
		Given any $\phi_1,\phi_2\in\ml{F}_\nu$, we have that
		\begin{equation*}
			\|\Lambda (\phi_1) - \Lambda(\phi_2)\|_* \le C \rho\la^9\log\lambda \,\|\phi_1-\phi_2\|_\infty
		\end{equation*}
		and
		\begin{equation*}
			\begin{split}
				\|N(\phi_1)-N(\phi_2)\|_* & \le C (\|\phi_1\|_\infty +
				\|\phi_2\|_\infty) \|\phi_1-\phi_2\|_\infty\\
				& \le C \nu \rho\la^9\log\lambda |\log(\rho\la)|\,\|\phi_1-\phi_2\|_\infty
			\end{split}
		\end{equation*}
		with $C$ independent of $\nu$. Therefore, from the Proposition
		\ref{inliop}
		\begin{equation*}
			\begin{split}
				\|\ml{A}(\phi_1)-\ml{A}(\phi_2)\|_\infty & \le C \nu \rho\la^9\log\lambda |\log (\rho\la)|^2 \|\phi_1-\phi_2\|_\infty,
			\end{split}
		\end{equation*}
		so that it follows that for all $\e$ sufficiently small and $\la$ sufficiently large $\ml{A}$ is a
		contraction mapping of $\ml{F}_\nu$ (for $\nu$ large enough), and
		therefore a unique fixed point of $\ml{A}$ exists in $\ml{F}_\nu$.
		\medskip
		
		Let us now discuss the differentiability of $\phi$ depending on
		$\xi'$, i.e., $\xi'\mapsto \phi(\xi')\in C(\bar\Om_\rho)$ is $C^1$.
		Since $R$ depends continuously (in the $*$-norm) on $\xi'$, using
		the fixed point characterization \eqref{aphi}, we deduce that the
		mapping $\xi'\mapsto \phi$ is also continuous. Then, formally
		\begin{equation*}
			\begin{split}
				\fr_{\xi'_{kl}} [\Lambda(\phi) ] = &\,\Big\{\rho^2\e^2\big[\partial_{\xi'_{kl}} (e^V)-\partial_{\xi'_{kl}} (e^{-V}) \big] - \partial_{\xi'_{kl}} W \Big\} \phi\\
				& \,
				+ [\rho^2\e^2(e^V+e^{-V}) -W] \partial_{\xi'_{kl}}\,\phi .
			\end{split}
		\end{equation*}
		and
		\begin{equation*}
			\begin{split}
				\fr_{\xi'_{kl}} [N(\phi) ] = &\,\rho^2\e^2\partial_{\xi'_{kl}} (e^V) (e^\phi -\phi
				-1)
				+ \rho^2\e^2e^V[ e^\phi -1] \partial_{\xi'_{kl}}\,\phi\\
				& \, -\rho^2\e^2\partial_{\xi'_{kl}} (e^{-V}) (e^{-\phi} +\phi -1)
				+ \rho^2\e^2e^{-V} [ e^{-\phi} -1] \partial_{\xi'_{kl}}\,\phi.
			\end{split}
		\end{equation*}
		From the definition of $V$ it follows that
		$$\fr_{\xi'_{kl}}V(y)=a_k\frac{4(y-\xi_k')_l}{\mu_k^2 + |y - \xi_k'|^2} - \sum_{i=1}^m a_i \frac{2\fr_{\xi'_{kl}}(\mu_i^2)}{\mu_i^2 + |y-\xi_i'|^2} + \rho \fr_{2l}H_\la(\rho y, \rho \xi_k') + O(\rho^2\la^2).$$
		Hence, $\|\rho^2\e^2\fr_{\xi'_{kl}} (e^V)\|_*$ and $ \|\rho^2\e^2\fr_{\xi'_{kl}}(e^{-V})\|_*$ are uniformly bounded. Then, we conclude that
		\begin{equation*}
			\begin{split}
				\| \fr_{\xi'_{kl}} [\Lambda(\phi) ] \|_* \le  &\,\Big\|\rho^2\e^2\big[\partial_{\xi'_{kl}} (e^V)-\partial_{\xi'_{kl}} (e^{-V}) \big] - \partial_{\xi'_{kl}} W \Big\|_* \|\phi\|_\infty\\
				& \,
				+ [\| \rho^2\e^2(e^V+e^{-V}) -W \|_* \|\partial_{\xi'_{kl}}\,\phi \|_\infty.
			\end{split}
		\end{equation*}
		and
		\begin{equation*}
			\begin{split}
				\|\fr_{\xi'_{kl}}[ N(\phi) ] \|_* & \le C\|\phi\|_\infty^2 + C
				\|\phi\|_\infty\, \|\partial_{\xi'_{kl}}\phi \|_\infty\\
				& \le C \nu\rho\la^9\log\lambda |\log (\rho\la)|\lf[\nu\rho\la^9 \log\lambda|\log (\rho\la) | + \| \fr_{\xi'_{kl}}
				\phi\|_\infty\rg].
			\end{split}
		\end{equation*}
		Also, observe that we have
		$$\fr_{\xi'_{kl}}\phi= - (\fr_{\xi'_{kl}}T ) \lf( R + \Lambda(\phi) + N (\phi)\rg) - T\lf(\fr_{\xi'_{kl}}\lf[ R +\Lambda(\phi)  + N (\phi)\rg]\rg).$$
		So, using \eqref{estphicij} and \eqref{estdphi}, we get
		\begin{equation*}
			\begin{split}
				\|\fr_{\xi'_{kl}}\phi\|_\infty 
				%& \le C |\log (\rho\la)|^2 \, \|R +\Lambda(\phi)  + N(\phi)\|_* \\
				% &\qquad + C |\log(\rho\la)| \,
				% \|\fr_{\xi'_{kl}}(R +\Lambda(\phi) + N (\phi))\|_*\\
				& \le C |\log (\rho\la)| \,\Big[|\log (\rho\la)| \lf( \|R\|_* +\|\Lambda (\phi)\|_* +
				\|N (\phi)\|_*\rg) \\
				&\qquad  + \|\fr_{\xi'_{kl}}R \|_* + \| \fr_{\xi'_{kl}} [\Lambda(\phi) ] \|_* + \|\fr_{\xi'_{kl}}
				[N(\phi) ] \|_*\Big].
			\end{split}
		\end{equation*}
		Let us estimate $\|\fr_{\xi'_{kl}} R\|_*$. We know that
		$$\fr_{\xi'_{kl}} R(y)=\lap \fr_{\xi'_{kl}}V(y)+\rho^2\e^2(e^V+e^{-V})\fr_{\xi'_{kl}}V(y).$$
		From similar computations to
		deduce Lemma \ref{rest} it follows that  for
		any $l=1,2$:
		\begin{itemize}
			\item if $|y-\xi_j'|>\frac{\de}{\rho }$ for
			all $j=1,\dots,m$ then $$\fr_{\xi_{kl}'}R(y)=O\big( \rho\la\log\lambda[\rho^4 + \rho^2\e^2]\big),$$
			\item if $\frac{\de}{\rho\la\log\la }\le |y-\xi_j'| \le \frac{\de}{\rho}$ for
			some $j\in\{1,\dots,m\}$ then 
			$$\fr_{\xi_{kl}'}R(y)=\rho\la^2\log^2\lambda\, O\bigg(\rho^2\epsilon^2\Big[\log^4\lambda+\frac{\la^4}{\rho^4|y-\xi_j'|^4}+ \rho^4\la^4|y-\xi_j'|^4\Big] \bigg)$$ and
			\item if $|y-\xi_j'|\le \frac{\de}{\rho\la\log\lambda}$ for some $j\in\{1,\dots,m\}$ then
			\begin{equation*}
				\begin{split}
					\fr_{\xi_{kl}'}R(y)
					=& \frac{8\mu_j^2}{(\mu_j^2+|y-\xi_j'|^2)^2} O(\rho\la\log\la [1 + |y-\xi_j'|] ) \\
					& + \frac{4 \de_{jk} (y-\xi_k')_l + 2\fr_{\xi'_{kl}}(\mu_j^2)}{\mu_j^2 + |y - \xi_j'|^2}O(\rho^2\e^2) +O(\rho^3\e^2\la\log\lambda + \rho^4).
				\end{split}
			\end{equation*}
		\end{itemize}
		Therefore, from the definition of *-norm we conclude that
		$$\|\fr_{\xi'_{kl}} R\|_*\le C(\rho\la \log\lambda + \rho^2\la^{11}\log^3\lambda).$$
		Similar computations as above and those used to deduce Lemma \ref{rest} lead us to find the estimate
		$$\big\|\rho^2\e^2(e^V - e^{-V})\fr_{\xi'_{kl}} V -\fr_{\xi'_{kl}} W \big\|_*\le C(\rho\la\log\lambda + \rho^2\la^{11}\log^3\lambda).$$
		Hence, we find the following
		estimate
		\begin{equation*}
			\begin{split}
				\|\fr_{\xi'_{kl}}\phi\|_\infty & \le C \big[ \rho\la^9\log\lambda |\log(\rho\la)|^2 + \rho\la^9\log\lambda |\log(\rho\la)|^2 \|\fr_{\xi'_{kl}}\phi\|_\infty\big]\\
			\end{split}
		\end{equation*}
		Thus, we conclude \eqref{cotadphi}. Note that $\fr_{\xi_{kl}'}\mu_j=O(\rho\lambda\log\lambda)$.
		
		The above computations can be made rigorous by using the implicit
		function theorem and the fixed point representation \eqref{aphi}
		which guarantees $C^1$ regularity in $\xi'$.
	\end{proof}
	
	\section{Variational reduction and energy computations}\label{energyex}
	
	In view of Proposition~\ref{nlp} we obtain a solution to~\eqref{line} with the form $V + \phi$ if we are able to get that
	\begin{equation}\label{scij}
		c_{ij}(\xi)=0,\qquad i=1,2,\ j=1,\dots,m.
	\end{equation}
	
	This problem is equivalent to look for critical points of the following functional
	$$
	F_{\e,\la}(\xi)=J_{\e,\la}(U+\tilde \phi),
	$$
	where $U$ is the approximation defined in \eqref{defU} and $\ds\tilde\phi(x)=\phi\Big(\frac{x}{\rho}\Big)$ with $\phi$ the solution to \eqref{line1}.
	The energy function $J_{\e,\la}$ is given by
	\begin{equation}\label{Jel}
		J_{\e,\la}(u) = \frac{1}{2} \int_\Omega | \nabla u |^2dx -\epsilon^2
		\int_\Omega \left(e^{u}+e^{-u}\right)dx+\frac{\la}{2}\int_{\partial\Omega} u^2d\sigma, \quad u \in H^1(\Omega).
	\end{equation}
	
	Notice that critical points for $J_{\epsilon, \la}$ are weak solutions for~\eqref{eq} compare with \cite{barpis}. We have the following sufficient condition to have~\eqref{scij}.
	\begin{lema}\label{cpfc0}
		There exists $\la_0>0$ and $\e_0>0$ such that for any $\la\ge\la_0$ and $\e>0$ so that $0<\rho\la^9<\e_0$, if $\xi\in\Om^m$ is a critical point of $F_{\e,\la}$ satisfying \eqref{setm}  then $u=U(\xi)+\tilde\phi(\xi)$ is a critical point of $J_{\e,\la}$, that is, if
		$D_\xi F_{\e,\la}(\xi)=0$ then $\xi$ satisfies system \eqref{scij},
		i.e., $u$ is a solution to \eqref{eq}.
	\end{lema}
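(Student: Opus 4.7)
The plan is the standard Lyapunov--Schmidt reduction argument: translate the projected equation~\eqref{nline1} back to $\Omega$, express the gradient $DF_{\epsilon,\lambda}(\xi)$ as a linear combination of the unknowns $c_{ij}(\xi)$, and then show that the matrix of the resulting linear system is diagonally dominant, hence invertible, forcing $c_{ij}(\xi) = 0$.

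First, I would rewrite~\eqref{nline1} in the original variable. Setting $u = U + \tilde\phi$ with $\tilde\phi(x) = \phi(x/\rho)$, a direct rescaling of~\eqref{nline1} yields that $u$ satisfies
$$
\Delta u + \epsilon^2(e^u - e^{-u}) = \rho^{-2}\sum_{i=1}^{2}\sum_{j=1}^m c_{ij}(\xi)\,\chi_j(\cdot/\rho)\,Z_{ij}(\cdot/\rho) \quad \text{in } \Omega,
$$
together with $\RR_\lambda u = 0$ on $\partial\Omega$. Multiplying by a test function $\psi\in H^1(\Omega)$ and integrating by parts (the Robin boundary term cancels because $\RR_\lambda u = 0$) gives
$$
DJ_{\epsilon,\lambda}(u)[\psi] = -\sum_{i,j} c_{ij}(\xi)\int_{\Omega_\rho}\chi_j(y)\,Z_{ij}(y)\,\psi(\rho y)\,dy,
$$
after a change of variable $x = \rho y$.

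Next, applying the chain rule to $F_{\epsilon,\lambda}(\xi) = J_{\epsilon,\lambda}(U + \tilde\phi)$ and setting $\partial_{(\xi_l)_k} F_{\epsilon,\lambda}(\xi) = 0$, with the choice $\psi = \partial_{(\xi_l)_k}(U + \tilde\phi)$ in the identity above, one obtains the linear system
$$
\sum_{i,j} c_{ij}(\xi)\,M_{(l,k),(i,j)} = 0,\qquad l=1,\dots,m,\ k=1,2,
$$
where
$$
M_{(l,k),(i,j)} = \int_{\Omega_\rho}\chi_j Z_{ij}\Bigl[\partial_{(\xi_l)_k}V(y) + \rho^{-1}\partial_{(\xi_l')_k}\phi(y)\Bigr]\,dy.
$$
The key computation is that, by the explicit form of $w_j$ in~\eqref{defwj}, differentiating $V_l$ with respect to $(\xi_l)_k$ produces the leading contribution
$$
\partial_{(\xi_l)_k}V(y) = \frac{a_l}{\rho\,\mu_l}\,Z_{kl}(y) + O(1),
$$
where the $O(1)$ remainder collects contributions from $\partial_{(\xi_l)_k}H_l$ (controlled via Lemma~\ref{1} and the smoothness of $H_\lambda$) together with the dependence of the $\mu_j$ on $\xi$ through~\eqref{condmu}. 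Combined with the orthogonality relations $\int_{\R^2}\chi Z_{ij}Z_{kl}\,dy = c_0\,\delta_{lj}\delta_{ki}$ for some $c_0 > 0$, and the estimate $\|\partial_{\xi'}\phi\|_{\infty}\leq C\rho\lambda^9\log\lambda\,|\log(\rho\lambda)|^2$ from~\eqref{cotadphi}, this gives the expansion
$$
M_{(l,k),(i,j)} = \frac{a_l\,c_0}{\rho\,\mu_l}\,\delta_{lj}\delta_{ki} + o(\rho^{-1}),
$$
uniformly in the parameter regime $0 < \rho\lambda^9 < \epsilon_0$. In particular, $M$ is a small perturbation of a nondegenerate diagonal matrix, hence invertible, and the homogeneous system forces $c_{ij}(\xi) = 0$ for all $i,j$, giving~\eqref{scij} and therefore a genuine solution of~\eqref{eq}.

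The main technical obstacle is the careful bookkeeping of the off-diagonal entries and lower-order corrections in $M$: one must control the $\xi$-derivative of $H_j$ (requiring estimates analogous to those in Lemma~\ref{1} but for derivatives), the $\xi$-dependence of the parameters $\mu_j$ determined by~\eqref{condmu}, and the coupling induced by $\partial_{\xi'}\phi$. Each of these contributes only $o(\rho^{-1})$ to $M$, but verifying this uses the same exponential localization exploited in the proof of Lemma~\ref{rest}, together with the smallness of $\rho\lambda^9\log\lambda\,|\log(\rho\lambda)|^2$ in the working regime.
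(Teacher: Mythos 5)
Your proposal is correct and follows essentially the same route as the paper: the paper differentiates the scaled functional $I_{\e,\la}(V+\phi)$ directly in $\Omega_\rho$, uses equation~\eqref{nline1} so that only the terms $c_{ij}\chi_j Z_{ij}$ survive, expands $\partial_{\xi'}V = \tfrac{4a_k}{\mu_k}Z_{kl}+O(\rho\la\log\la)$, invokes~\eqref{cotadphi}, and concludes $c_{ij}=0$ by diagonal dominance, which is exactly your argument written in unscaled variables. The only minor imprecision is your claim that the remainder in $\partial_{(\xi_l)_k}V$ is $O(1)$: it is in fact of order $\la\log\la$ (coming from derivatives of $H_\la$ and of the $\mu_j$), but since this is still $o(\rho^{-1})$ in the working regime, the diagonal-dominance conclusion is unaffected.
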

	
	\begin{proof}
		Define the energy functional $I_{\e,\la}$ associated to problem
		\eqref{line}, namely,
		$$
		I_{\e,\la}(v)=\frac{1}{2}\int_{\Om_\rho}|\grad
		v|^2 - \rho^2\e^2\int_{\Om_\rho} (e^{v}+e^{-v}) \,dy + \frac{\rho\la}{ 2}\int_{\fr\Om_\rho} v^2\,d\sigma(y).
		$$
		Let us differentiate the function $F_{\e,\la}(\xi)$ with respect to
		$\xi$. Since 
		$$I_{\e,\la}(V(\xi') + \phi(\xi')) = J_{\e,\la}(U(\xi) +
		\ti\phi(\xi)),$$
		we can differentiate directly $I_{\e,\la}(V+\phi)$
		(under the integral sign), so that integrating by parts 
		\begin{equation*}
			\begin{split}
				\fr_{\xi_{kl}}F_{\e,\la}(\xi) = &\,
				\frac{1}{\rho}DI_{\e,\la}(V+\phi)\lf[\fr_{\xi_{kl}'}V
				+\fr_{\xi_{kl}'}\phi\rg]\\
				% =&\,\frac{1}{\rho}\bigg(\int_{\Om_\rho} \grad (V+\phi)\grad \fr_{\xi_{kl}'}(V+\phi)dy\\ 
				%  &\qquad - \rho^2\e^2\int_{\Om_\rho}[e^{V+\phi} - e^{-(V+\phi)} ]\fr_{\xi_{kl}'}(V+\phi)\,dy\\
				%  &\qquad +\rho\la\int_{\fr\Om_\rho}(V+\phi)\fr_{\xi_{kl}'}(V+\phi)\,d\sigma(y) \bigg)\\
				% =&\, -\frac{1}{\rho}\int_{\Om_\rho} \lf[\Delta(V+\phi)+\rho^2\e^2(e^{V+\phi}- e^{-(V+\phi)})\rg] \fr_{\xi_{kl}'}(V+\phi) \\
				=&\, -\frac{1}{\rho}\sum_{i=1}^{2}
				\sum_{j=1}^m c_{ij} \int_{\Om_\e} \chi_j
				Z_{ij}\,\lf[\fr_{\xi_{kl}'}V + \fr_{\xi_{kl}'}\phi\rg].
			\end{split}
		\end{equation*}
		From the results of the previous
		section, this expression defines a continuous function of $\xi'$,
		and hence of $\xi$. Let us assume that $D_\xi F_{\e,\la}(\xi)=0$. Then,
		from the latter equality
		$$
		\sum_{i=1}^{2} \sum_{j=1}^m c_{ij} \int_{\Om_\e} \chi_j
		Z_{ij}\,\lf[\fr_{\xi_{kl}'}V + \fr_{\xi_{kl}'}\phi\rg]=0,\qquad k=1,2,\;l=1,\dots,m.
		$$
		Using  \eqref{cotadphi} and $\fr_{\xi_{kl}'}V = 4\frac{a_k}{\mu_k} Z_{kl} + O\big(\rho\la\log\lambda \big)$,
		where $O(\rho\la\log\lambda)$ is in the $L^\infty$ norm, it follows that
		$$
		\sum_{i=1}^{2} \sum_{j=1}^m c_{ij} \int_{\Om_\e} \chi_j
		Z_{ij}\,\lf[Z_{kl} + o(1)\rg]=0,\qquad k=1,2,\; l=1,\dots,m.
		$$
		with $o(1)$ small in the sense of the $L^\infty$ norm as $\rho\la^9\log\lambda\to0$.
		The above system is diagonal dominant and we thus get $c_{ij}=0$
		for $i=1,2$, $j=1,\dots,m$.
	\end{proof}
	
Next result states an expansion of $F_{\epsilon, \la}$ in terms of $\varphi_m$. 
In order to have a more clearly the relation between $\epsilon \in (0,1)$ and $\lambda > 1$ such that they satisfying $\epsilon \la^{16} \leq \epsilon_0$ for some $\epsilon_0 < 1$, and recalling $\rho = \epsilon \la^{-2}$, we see that
\begin{equation}\label{estlogrola}
0 \geq \log(\rho \lambda) \geq \log \epsilon - \log \epsilon^{-1/16} \geq 2\log \epsilon,
\end{equation}
from which we state the estimates in terms of the leading expression at the logarithm.
	\begin{prop}\label{fju}
		The following expansions holds
		$$
		F_{\e,\la}(\xi)=-16\pi m +8\pi m\log 8 -16\pi m\log(\rho\la^2)-4\pi \varphi_m(\xi)+\theta_{\e,\la}(\xi),
		$$
		in $C^1$-sense, where
		\begin{equation}\label{vphi}
			\varphi_m(\xi)=\sum_{j=1}^m\lf[H_\la(\xi_j,\xi_j) + \sum_{i=1,i\ne j}^m a_i a_j G_\la(\xi_i,\xi_j)\rg],
		\end{equation}
$$
		|\theta_{\e,\la}(\xi)|=O\big(\epsilon^2 \lambda^{14} |\log\epsilon |^3\big),
		$$
		and
		$$
		|\grad\theta_{\e,\la} (\xi)|= O \big(\epsilon \lambda^{16}\log^4 \epsilon \big),
		$$
		uniformly on points $\xi=(\xi_1,\dots,\xi_m)\in \Om^m$ satisfying
		the constraints \eqref{setm}, as $\e\to0$ and $\lambda\to+\infty$.
	\end{prop}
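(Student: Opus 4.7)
The plan is to write $F_{\epsilon,\la}(\xi) = J_{\epsilon,\la}(U) + [J_{\epsilon,\la}(U+\tilde\phi) - J_{\epsilon,\la}(U)]$ and treat the two pieces separately: the bracketed term will be absorbed into the error $\theta_{\epsilon,\la}$, while $J_{\epsilon,\la}(U)$ carries the leading expansion together with the geometric term $-4\pi\varphi_m(\xi)$.

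For the bracket, I exploit the fact that $V+\phi$ solves the equation of Proposition~\ref{nlp} modulo the span of $\{\chi_j Z_{ij}\}$. Testing that equation against $\phi$, integrating by parts (using $\mathcal{R}_{\rho\la}\phi = 0$), and invoking the orthogonality conditions $\int_{\Omega_\rho}\chi_j Z_{ij}\phi = 0$ yields $DI_{\epsilon,\la}(V+\phi)[\phi] = 0$, which, after the change of variables $x = \rho y$, becomes $DJ_{\epsilon,\la}(U+\tilde\phi)[\tilde\phi] = 0$. A two-point Taylor expansion along $t\mapsto U + t\tilde\phi$ then gives
\[
J_{\epsilon,\la}(U+\tilde\phi) - J_{\epsilon,\la}(U) = -\int_0^1 s\, D^2 J_{\epsilon,\la}(U+s\tilde\phi)[\tilde\phi,\tilde\phi]\,ds,
\]
which is quadratic in $\phi$. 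An $H^1$-bound on $\phi$ (obtained by testing $L\phi$ against $\phi$ itself, combined with the $\ast$-norm estimates of Lemma~\ref{rest} and the $L^\infty$-bound of Proposition~\ref{nlp}) then controls the right-hand side, and after invoking \eqref{estlogrola} this correction is seen to be $O(\epsilon^2\la^{14}|\log\epsilon|^3)$.

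The heart of the proof is the computation of $J_{\epsilon,\la}(U)$. Because $\mathcal{R}_\la U = 0$ holds \emph{exactly} by construction of the $H_j$'s, integration by parts merges the kinetic and boundary terms into $-\tfrac{1}{2}\int U\Delta U$, and the identity $\Delta U = -\epsilon^2\sum_j a_j e^{w_j}$ yields
\[
J_{\epsilon,\la}(U) = \frac{\epsilon^2}{2}\sum_{j,k} a_j a_k \int_\Omega e^{w_j} U_k - \epsilon^2\int_\Omega(e^U + e^{-U}).
\]
I would evaluate each term by rescaling $x = \xi_j+\rho z$ near $\xi_j$, turning $\epsilon^2 e^{w_j}$ into the Liouville bubble $8\mu_j^2/(\mu_j^2+|z|^2)^2$ (up to a $\rho^{-2}$ factor absorbed by the Jacobian); the expansions of Lemma~\ref{1}, namely $U_k = G_\la(\cdot,\xi_k) + O(\rho^2)$ in compacts away from $\xi_k$ and $H_j = H_\la(\cdot,\xi_j) - \log(8\mu_j^2) + 4\log\la + O(\rho^2\la^2)$ locally, reduce each integral to the elementary identities $\int_{\R^2}\tfrac{8\mu^2}{(\mu^2+|z|^2)^2}dz = 8\pi$ and $\int_{\R^2}\tfrac{8\mu^2}{(\mu^2+|z|^2)^2}\log(\mu^2+|z|^2)dz = 8\pi(\log\mu^2+1)$. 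The integrals $\epsilon^2\int e^{\pm U}$ concentrate near the $\xi_j$ with $a_j = \pm 1$ respectively and are treated analogously. Summing all contributions yields the explicit constants $-16\pi m + 8\pi m\log 8 - 16\pi m\log(\rho\la^2)$, together with self-interaction terms involving $\log(8\mu_j^2)$, $H_\la(\xi_j,\xi_j)$ and cross terms $8\pi a_j a_k G_\la(\xi_j,\xi_k)$. The calibration \eqref{condmu} is precisely what cancels the $\mu_j$-dependent pieces and leaves $-4\pi\varphi_m(\xi)$.

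For the $C^1$ statement I would differentiate the above computation in $\xi$ under the integral sign, using the derivative asymptotics for $H_\la, G_\la$ from \eqref{expanrobin}, the bound \eqref{cotadphi} on $\partial_\xi\tilde\phi$, and the identity $\partial_{\xi_{kl}}F_{\epsilon,\la}(\xi) = DJ_{\epsilon,\la}(U+\tilde\phi)[\partial_{\xi_{kl}}U + \partial_{\xi_{kl}}\tilde\phi]$ together with the vanishing of $DJ_{\epsilon,\la}(U+\tilde\phi)$ on the orthogonal complement of $\mathrm{span}\{\chi_j Z_{ij}\}$. The main obstacle throughout is bookkeeping: each differentiation in $\xi$ costs an extra $\la\log\la$ through the Robin-function asymptotics, and the logarithmic factors $|\log(\rho\la)|$ from Propositions~\ref{inliop} and~\ref{nlp} accumulate, so careful accounting of polynomial-in-$\la$ and logarithmic factors is needed to land exactly at the stated $O(\epsilon\la^{16}\log^4\epsilon)$ gradient error.
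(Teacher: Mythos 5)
Your proposal follows the paper's strategy almost verbatim: the same splitting $F_{\e,\la}=J_{\e,\la}(U)+[J_{\e,\la}(U+\ti\phi)-J_{\e,\la}(U)]$, the same expansion of $J_{\e,\la}(U)$ (integration by parts using the exact Robin condition for $U$, the identity $\Delta U=-\e^2\sum_j a_j e^{w_j}$, rescaling to Liouville bubble integrals, and cancellation of the $\mu_j$-terms through the calibration \eqref{condmu}), the same use of $DI_{\e,\la}(V+\phi)[\phi]=0$ with a second-order Taylor expansion for the correction, and the same differentiation scheme for the $C^1$ statement. So there is no difference of route to report there.

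There is, however, one concrete spot where your argument, as written, would fall short of the stated estimate: the bound on the Taylor remainder. If you control $\int_0^1 s\,D^2 J_{\e,\la}(U+s\ti\phi)[\ti\phi,\ti\phi]\,ds$ by estimating the $H^1$-norm of $\phi$ and the potential term separately, the dominant contribution is of size $\|\phi\|_\infty^2\le C\rho^2\la^{18}\log^2\la\,|\log(\rho\la)|^2=C\e^2\la^{14}\log^2\la\,|\log(\rho\la)|^2$, because $\int W\phi^2$ (and the comparable term $\rho^2\e^2\int (e^{V+t\phi}+e^{-V-t\phi})\phi^2$) carries no smallness beyond $\|\phi\|_\infty^2$; after \eqref{estlogrola} this gives $O(\e^2\la^{14}|\log\e|^4)$, one logarithm worse than the claimed $O(\e^2\la^{14}|\log\e|^3)$ (and similarly for the gradient bound). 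The paper avoids this loss by substituting the equation \eqref{nline1} into the second variation: the purely quadratic pieces cancel against $\int W\phi^2$ up to $\Lambda$-type and cubic terms, leaving
\begin{equation*}
-\int_0^1\Big(\int_{\Om_\rho}[R+N(\phi)]\,\phi-\int_{\Om_\rho}\rho^2\e^2\big[e^{V}(e^{t\phi}-1)+e^{-V}(e^{-t\phi}-1)\big]\phi^2\Big)t\,dt,
\end{equation*}
whose leading term is $\|R\|_*\|\phi\|_\infty\le C\rho^2\la^{18}\log^2\la\,|\log(\rho\la)|$ (the remaining terms are cubic in $\|\phi\|_\infty$), which is exactly what produces $|\log\e|^3$. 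This is a minor and easily repaired gap — your testing of the equation against $\phi$ is precisely the identity needed, you just have to feed it back into $D^2J$ rather than use it only for an $H^1$ bound — but as stated your estimate does not reach the exponent in the Proposition.
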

	
	\begin{proof}
		First, we shall expand the energy functional $J_{\e,\la}$ evaluated in the ansatz $U$, namely, we give an asymptotic estimate of $J_{\e,\la}(U)$.
		
		\begin{claim}
			The following expansion does hold
			$$J_{\e,\la}(U)=-16\pi m +8\pi m\log 8 -16\pi m\log(\rho\la^2)-4\pi \varphi_m(\xi)+O(\rho\la\log\lambda)$$
		\end{claim}
		
		\begin{proof}
			First, we will evaluate the quadratic and boundary parts of energy evaluated at
			$U$, that is, integrating by parts
			$$\frac{1}{2}\int_\Om|\grad U|^2\,dx + \frac{\la}{2} \int_{\fr \Om} U^2\, d\sigma =-\frac{1}{2}\int_\Om U\Delta
			U\,dx=-\frac{1}{2}\sum_{j=1}^m a_j \int_\Om U\Delta
			U_j\,dx,$$
			since on $\partial\Om$
			$$\pd{U}{\nu} +\la U=0.$$ 
			Using the equation \eqref{eqwj} of $U_j=w_j+H_j$ (recall $H_j$ is harmonic), we have
			\begin{equation*}
				\begin{split}
					\int_\Om U(-\Delta U_j)\,dx
					&=\int_\Om \e^2 e^{w_j(x)}U(x)\,dx\\
					&= a_j\int_\Om \e^2 e^{w_j}(w_j+H_j) + \sum_{i\ne j} a_i\int_\Om \e^2 e^{w_j}(w_i+H_i).
				\end{split}
			\end{equation*}
			Then, we expand as follows
			$$\int_\Om \e^2 e^{w_j}(w_j+H_j) = \int_{B(\xi_j,\frac{d_j}{2})} \e^2e^{w_j}(w_j+H_j)+\int_{\Om\sm B(\xi_j,\frac{d_j}{2})} \e^2e^{w_j}(w_j+H_j).$$
			Using \eqref{estimacion1} and $x-\xi_j=\mu_j\rho y$, we obtain that
			\begin{equation*}
				\begin{split} 
					&\int_{B(\xi_j,\frac{d_j}{2})} \e^2e^{w_j}(w_j+H_j)\\
					%&=\int_{B(\xi_j,\frac{d_j}{2})} \e^2e^{w_j}\lf[w_j+H_\la(x,\xi_j)-\log(8\mu_j^2)+4\log\la +O\lf(\frac{\mu_j^2\rho^2}{d_j^2}\rg)\rg]\\
					=&\,\int_{B(\xi_j,\frac{d_j}{2})}\frac{8\mu_j^2\rho^2}{(\mu_j^2\rho^2+|x-\xi_j|^2)^2} \bigg[\log\frac{1}{(\mu_j^2\rho^2+|x-\xi_j|^2)^2}+ H_\la(x,\xi_j)\\
					&\qquad\qquad\qquad \qquad +O\lf(\frac{\mu_j^2\rho^2}{d_j^2}\rg)\bigg]\\
					=&\,\int_{B(0,\frac{d_j}{2\mu_j\rho})}\frac{8}{(1+|y|^2)^2} \bigg[-2 \log(1+|y|^2) - 4\log(\mu_j\rho) + H_\la(\xi_j+\mu_j\rho y,\xi_j)\\
					&\qquad\qquad\qquad \qquad +O\lf(\frac{\mu_j^2\rho^2}{d_j^2}\rg)\bigg]\\
					%=&\, -2\lf[8\pi+O\Big(\frac{\mu_j^2\rho^2}{d_j^2}\Big)\rg] - 4\log(\mu_j\rho)\lf[8\pi + O\Big(\frac{\mu_j^2\rho^2}{d_j^2}\Big) \rg]\\
					%&\, + \lf[8\pi H_\lambda(\xi_j,\xi_j)+O(\rho\lambda\log\lambda)\rg]+ O\Big(\frac{\mu_j^2\rho^2}{d_j^2}\Big)\\
					=&\,-16\pi -32\pi\log(\mu_j\rho)+8\pi H_\la (\xi_j,\xi_j)+O(\rho\la\log\lambda).
				\end{split}
			\end{equation*}
			
			Since $\mu_j$ is uniformly bounded and away from zero, and since $d_j = O(\la^{-1})$, we see that
			$$\int_{B(0,\frac{d_j}{2\mu_j\rho})}\frac{8}{(1+|y|^2)^2}\log(1+|y|^2)\, dy = 8\pi+O\Big(\frac{\mu_j^2\rho^2}{d_j^2}\big|\log\frac{\mu_j\rho}{d_j}\big|\Big),$$
			and
			$$\int_{B(0,\frac{d_j}{2\mu_j\rho})}\frac{8}{(1+|y|^2)^2}\, dy=8\pi+O\Big(\frac{\mu_j^2\rho^2}{d_j^2}\Big)$$
			and by using \eqref{tehxi}, we get $H_\la(\xi_j+\mu_j\rho y,\xi_j)=H_\la(\xi_j,\xi_j)+O(\mu_j\rho\la\log\la |y|)$ in $B(0,\frac{d_j}{ 2\mu_j\rho})$ so that
			$$\int_{B(0,\frac{d_j}{2\mu_j\rho})}\frac{8}{(1+|y|^2)^2}H_\la(\xi_j+\mu_j\rho y,\xi_j)\, dy=8\pi H_\la(\xi_j,\xi_j)+O(\rho\la\log\la ).$$
			Also,
			\begin{equation*}
				\begin{split} 
					\int_{\Om\sm B(\xi_j,\frac{d_j}{2})} \e^2e^{w_j}(w_j+H_j)
					&=\int_{\Om\sm B(\xi_j,\frac{d_j}{2})} \e^2e^{w_j}\lf[G_\la(x,\xi_j) +O\lf(\frac{\mu_j^2\rho^2}{d_j^2}\rg)\rg]\\
					&=O(\rho^2\la^2\log\la)
				\end{split}
			\end{equation*}
			in view of
			$$\e^2e^{w_j}=\frac{8\mu_j^2\rho^2}{(\mu_j^2\rho^2+|x-\xi_j|^2)^2}=O\Big(\frac{\mu_j^2\rho^2}{d_j^2}\Big)$$
			and $G_\la(x,\xi_j)=O(\log\la )$ for $|x-\xi_j|>\dfrac{d_j}{2}$. Therefore, we obtain that
			\begin{equation}\label{iewjuj}
				\begin{split}
					\int_\Om \e^2 e^{w_j}(w_j+H_j)
					&=-16\pi -32\pi\log(\mu_j\rho)+8\pi H_\la (\xi_j,\xi_j)+O(\rho\la\log\lambda)
				\end{split}
			\end{equation}
			Now, for $i\ne j$ we have that
			$$\int_\Om \e^2 e^{w_j}(w_i+H_i) = \int_{B(\xi_j,\frac{d_j}{2})} \e^2e^{w_j}(w_i+H_i)+\int_{\Om\sm B(\xi_j,\frac{d_j}{2})} \e^2e^{w_j}(w_i+H_i).$$
			Then, by using \eqref{estimacion1}, we get that
			\begin{equation*}
				\begin{split}
					\int_{B(\xi_j,\frac{d_j}{2})} \e^2 e^{w_j}(w_i+H_i) %=&\,\int_{B(\xi_j,\frac{d_j}{2})}\frac{8\mu_j^2\rho^2}{ (\mu_j^2\rho^2+|x-\xi_j|^2)^2 }\left[G_\la(x,\xi_i)+O\Big(\frac{\mu_i^2\rho^2}{ d_i^2}\Big)\right] dx \\
					=&\, 8\pi G_\la(\xi_i,\xi_j) + O(\rho\la\log\lambda)
				\end{split}
			\end{equation*}
			\begin{equation*}
				\begin{split}
					\int_{\Om\sm B(\xi_j,\frac{d_j}{2})} &\e^2 e^{w_j}(w_i+H_i)\\
					=&\,\int_{B(\xi_i,\frac{d_i}{2})}\e^2 e^{w_j}\left[\log\frac{1}{(\mu_i^2\rho^2+|x-\xi_i|)^2}+H_\la (x,\xi_i)+O\Big(\frac{\mu_i^2\rho^2}{d_i^2}\Big)\right] dx \\
					&\, +\int_{\Om\sm[B(\xi_j,\frac{d_j}{2})\cup B(\xi_i,\frac{d_i}{2})]}\e^2 e^{w_j}\left[G_\la(x,\xi_i)+O\Big(\frac{\mu_i^2\rho^2}{d_i^2}\Big)\right] dx\\
					=&\, O\lf(\rho^2\bigg[\frac{|\log \rho|}{\la^2} +\frac{|\log(\rho\la)|}{\la^2}\bigg]\rg)+O\left(\frac{\rho^2}{\lambda^2}\log\la +\frac{\rho^4}{\lambda^4}\right)\\
					=&\, O\left(\frac{\rho^2}{\lambda^2}|\log(\rho\la)|\right)
				\end{split}
			\end{equation*}
			in view of
			\begin{equation*}
				\begin{split}
					\int_{B(\xi_i,\frac{d_i}{2})}&\frac{8\mu_j^2\rho^2}{(\mu_j^2\rho^2+|x-\xi_j|^2)^2 }\left[\log\frac{1}{(\mu_i^2\rho^2+|x-\xi_i|)^2}+H_\la (x,\xi_i)+O\Big(\frac{\mu_i^2\rho^2}{d_i^2}\Big)\right] dx\\
					=&\,O\left(\rho^2\int_{B(\xi_i,\frac{d_i}{2})}|\log (\mu_i^2\rho^2+|x-\xi_i|)|\,dx +\rho^2\int_{B(\xi_i,\frac{d_i}{2})} |H_\la(x,\xi_i)|\, dx+\rho^4\right)\\
					=&\,O\left(\rho^2\Big[d_i^2|\log\rho| + \rho^2\Big(\frac{d_i}{\rho}\Big)^2\log\Big(\frac{d_i}{\rho}\Big)\Big]\right) + O(\rho^2[d_i^2\log\la + d_i^3\la \log\lambda ])
				\end{split}
			\end{equation*}
			and
			\begin{equation*}
				\begin{split}
					\int_{\Om\sm[B(\xi_j,\frac{d_j}{2})\cup B(\xi_i,\frac{d_i}{2})]}\frac{8\mu_j^2\rho^2}{(\mu_j^2\rho^2+|x-\xi_j|^2)^2 }&\left[G_\la(x,\xi_i)+O\Big(\frac{\mu_i^2\rho^2}{d_i^2}\Big)\right] dx\\
					=O\left(\frac{\rho^2}{d_j^2}\log\la+\frac{\rho^4}{d_j^2d_i^2}\right)
				\end{split}
			\end{equation*}
			Thus, we obtain that
			\begin{equation}\label{iewjui}
				\int_\Om \e^2 e^{w_j}(w_i+H_i)= 8\pi G_\la(\xi_i,\xi_j) + O(\rho\la\log\lambda).
			\end{equation}
			Taking into account \eqref{condmu}, \eqref{iewjuj} and \eqref{iewjui}, we find that
			\begin{equation}
				\begin{split}
					&\frac{1}{2}\int_\Om|\grad U|^2\,dx + \frac{\la}{2} \int_\Om U^2\, d\sigma=\frac{1}{2}\sum_{j=1}^m\left[\int_\Om \e^2e^{w_j}U_j+a_j\sum_{i=1,i\ne j}^m a_i\int_\Om\e^2e^{w_j}U_i\right]\\
					&=\frac{1}{2}\sum_{j=1}^m\lf[-16\pi -32\pi\log(\mu_j\rho)+8\pi H_\la (\xi_j,\xi_j) + a_j\sum_{i=1,i\ne j}^m a_i8\pi G_\la(\xi_i,\xi_j)\rg] + O(\rho\la\log\lambda)
				\end{split}
			\end{equation}
			
			On the other hand, we have
			$$\int_\Om \e^2 (e^{U} +e^{-U})\, dx=\sum_{j=1}^m \int_{B(\xi_j,\frac{d_j}{ \log\lambda})} \e^2(e^{U} +e^{-U})\,dx + \int_{\Om\sm \cup_{j=1}^m B(\xi_j,\frac{d_j}{\log\lambda})}\e^2(e^{U} +e^{-U})\,dx.$$
			Observe that
			\begin{equation*}
				\begin{split}
					\int_{\Om\sm \cup_{j=1}^m B(\xi_j,\de)}\e^2(e^{U} +e^{-U})\,dx&=\int_{\Om\sm \cup_{j=1}^m B(\xi_j,\de)}\e^2\big(e^{\sum_{i=1}^m a_i G_\la(x,\xi_i)+O(\rho^2\la^2)} \\
					&\qquad\qquad\quad+e^{-\sum_{i=1}^m a_i G_\la(x,\xi_i)+O(\rho^2\la^2)}\big)\,dx\\
					&=O(\e^2),
				\end{split}
			\end{equation*}
			by using that $\sum_{i=1}^m a_i G_\la(x,\xi_i)=O(1)$ in $\Om\sm \cup_{j=1}^m B(\xi_j,\de)$. So, we get that
			$$\int_{\Om\sm \cup_{j=1}^m B(\xi_j,\frac{d_j}{\log\lambda})} \e^2 (e^{U} +e^{-U})\, dx=\sum_{j=1}^m \int_{B(\xi_j,\de)\sm B(\xi_j,\frac{d_j}{\log\lambda})} \e^2(e^{U} +e^{-U})\,dx + O(\e^2).$$
			Now, assume that $a_j=1$ so that by \eqref{Hxi} we obtain 
			\begin{equation*}
				\begin{split}
					\int_{B(\xi_j,\de)\sm B(\xi_j,\frac{d_j}{\log\lambda})} \e^2e^{U}&=\int_{B(\xi_j,\de)\sm B(\xi_j,\frac{d_j}{\log\lambda})} \e^2e^{G_\la(x,\xi_j) + \sum_{l\ne j} a_lG_\la(x,\xi_l)+O(\rho^2\la^2)}\, dx\\
					&=  \int_{B(\xi_j,\de)\sm B(\xi_j,\frac{d_j}{\log\lambda})}\e^2\frac{e^{H_\la(x,\xi_j)+O(1)}}{|x-\xi_j|^4 }dx\\ &=O\lf(\e^2\la^4\int_{B(\xi_j,\de)\sm B(\xi_j,\frac{d_j}{\log\lambda})} |x-\xi_j|^{-4}\, dx\rg)\\
					&=O(\e^2\la^6\log^2\lambda)
				\end{split}
			\end{equation*}
			and
			\begin{equation*}
				\begin{split}
					\int_{B(\xi_j,\de)\sm B(\xi_j,\frac{d_j}{\log\lambda})} \e^2e^{-U}&=\int_{B(\xi_j,\de)\sm B(\xi_j,\frac{d_j}{\log\lambda})} \e^2e^{-G_\la(x,\xi_j) - \sum_{l\ne j} a_lG_\la(x,\xi_l)+O(\rho^2\la^2)}\, dx\\
					&=  \int_{B(\xi_j,\de)\sm B(\xi_j,\frac{d_j}{\log\lambda})}\e^2 |x-\xi_j|^4 e^{-H_\la(x,\xi_j)+O(1)} dx\\ &=O\lf(\e^2\la^4\int_{B(\xi_j,\de)\sm B(\xi_j,\frac{d_j}{\log\lambda})} |x-\xi_j|^{4}\, dx\rg)\\
					&=O(\e^2\la^4).
				\end{split}
			\end{equation*}
			Similarly, for $a_j=-1$ we find that
			$$\int_{B(\xi_j,\de)\sm B(\xi_j,\frac{d_j}{\log\lambda})}  \e^2(e^{U}+e^{-U})=O(\e^2\la^4+\e^2\la^6\log^2\lambda).$$
			Now, assume that $a_j=1$, so that by Lemma \ref{1} and taking $x-\xi_j=\mu_j\rho y$ we get
			\begin{equation*}
				\begin{split}
					\int_{B(\xi_j,\frac{d_j}{\log\lambda})} \e^2e^{U}\,dx
					&=\int_{B(\xi_j,\frac{d_j}{\log\lambda})}\frac{8\mu_j^2\rho^2}{(\mu_j^2\rho^2+|x-\xi_j|^2)^2}\,\exp\bigg(H_j(x)+\sum_{l\ne j}U_l(x)\bigg)\,dx\\
					&= \int_{B(\xi_j,\frac{d_j}{\log\lambda})}\frac{ 8e^{H_\la (x,\xi_j)-\log(8\mu_j^2)+4\log\la+\sum_{l\ne j}a_l G_\la (x,\xi_l)+O(\rho^2\la^2)}}
					{\mu_j^2\rho^2\lf(1+\big(\frac{|x-\xi_j|}{\mu_j\rho}\big)^2\rg)^2}\,dx\\
					&=\int_{B(0,\frac{d_j}{\mu_j\rho\log\lambda})}\frac{8 e^{H(\xi_j+\mu_j\rho y,\xi_j)-\log(8\mu_j^2)+4\log\la+\sum_{l\ne j}a_l G(\xi_j+\mu_j\rho	y,\xi_l)+O(\rho^2\la^2)}} {\lf(1+|y|^2\rg)^2}\,dy\\
					&=\int_{B(0,\frac{d_j}{\mu_j\rho\log\lambda})}\frac{8}{\lf(1+|y|^2\rg)^2}\left[1+O(\rho\la\log\lambda|y|+\rho^2\la^2)\right]\,dy\\
					&=8\pi + O(\rho\la\log\lambda),
				\end{split}
			\end{equation*}
			and
			\begin{equation*}
				\begin{split}
					\int_{B(\xi_j,\frac{d_j}{\log\lambda})} \e^2e^{-U}\,dx
					&=\e^2\int_{B(\xi_j,\frac{d_j}{\log\lambda})} \lf[\frac{8\mu_j^2\rho^2}{(\mu_j^2\rho^2+|x-\xi_j|^2)^2\e^2}\rg]^{-1}\,\exp\bigg(-H_j(x)-\sum_{l\ne j}U_l(x)\bigg)\,dx\\
					&= \int_{B(\xi_j,\frac{d_j}{\log\lambda})}\frac{ \e^4}
					{8\mu_j^2\rho^2}\lf(\mu_j^2\rho^2+ |x-\xi_j|^2\rg)^2 \\
					&\qquad \qquad \qquad \times e^{-H_\la (x,\xi_j)+\log(8\mu_j^2)-4\log\la-\sum_{l\ne j}a_l G_\la (x,\xi_l)+O(\rho^2\la^2)}\,dx\\
					&=\int_{B(\xi_j,\frac{d_j}{\log\lambda})} \frac{ \e^4}
					{8\mu_j^2\rho^2}\lf(\mu_j^2\rho^2+ |x-\xi_j|^2\rg)^2 \left[1+O(\la\log\lambda|x-\xi_j|+\rho^2\la^2)\right]\,dx\\
					&=O\lf(\frac{\rho\e}{\log^4\lambda}\rg)
				\end{split}
			\end{equation*}
			In case $a_j=-1$, using previous ideas we find that
			$$\int_{B(\xi_j,\frac{d_j}{\log\lambda})} \e^2e^{U}\,dx=O\lf(\frac{\rho\e}{\log^4\lambda}\rg)$$
			and
			$$\int_{B(\xi_j,\frac{d_j}{\log\lambda})} \e^2e^{-U}\,dx=8\pi + O(\rho\la\log\lambda).$$
			Therefore, we conclude that
			$$\int_\Om \e^2(e^{U}+e^{-U})dx=8\pi m + O\lf(\rho\la\log\lambda+\frac{\rho\e}{\log^4\lambda}+\e^2\la^6\log^2\lambda\rg).$$
			
			From the choice of $\mu_j$'s in \eqref{condmu}, we obtain that
			\begin{equation}
				\begin{split}
					J_{\e,\la}(U)
					&=-16\pi m +8\pi m\log 8 -16\pi m\log(\rho\la^2)-4\pi \varphi_m(\xi)+O(\rho\la\log\lambda)\\
				\end{split}
			\end{equation}
			where $\varphi_m$ is given by \eqref{vphi}, if $\e\la^7\log\lambda|\log(\frac{\e}{\lambda})|^2$ is small enough.
		\end{proof}
		
		\begin{claim}
			The following expansion does hold
			$$\partial_{(\xi_l)_k}[J_{\e,\la}(U)]= -4\pi \partial_{(\xi_l)_k} \varphi_m(\xi) + O(\rho\la\log^2\lambda)$$
			for $l=1,\dots,m$ and $k=1,2$.
		\end{claim}
		
		\begin{proof}
			First, observe that
			$$\partial_{(\xi_l)_k}\big[J_{\e,\la}(U)\big]= D J_{\epsilon,\lambda} (U)\big[\partial_{(\xi_l)_k}U\big]=-\int_\Omega\left[\Delta U+\epsilon^2(e^{U} - e^{-U})\right]\partial_{(\xi_l)_k}U.$$
			Now, we have that
			$$\int_\Omega \partial_{(\xi_l)_k}U(-\Delta U)=\sum_{j=1}^m a_j\int_\Omega \partial_{(\xi_l)_k}U(-\Delta U_j)=\sum_{j=1}^m a_j\int_\Omega \epsilon^2e^{w_j(x)}\partial_{(\xi_l)_k}U.$$
			Using similar arguments as above and taking into account a suitable expansion for $\partial_{(\xi_l)_k}U$ (see $\partial_{(\xi'_l)_k}V$ in the proof of Proposition \ref{nlp}), we conclude that
			\begin{equation*}
				\begin{split}
					\int_\Omega \partial_{(\xi_l)_k}U(-\Delta U)=&\, -16\pi \sum_{j=1}^m \frac{\partial_{(\xi_l)_k}\mu_j}{\mu_j} + 8\pi \partial_{2k}H_\lambda(\xi_l,\xi_l) \\
					&\,+ 8\pi \sum_{\substack{j=1\\ j\ne l}}^m a_j a_l \partial_{2k}G_\lambda(\xi_j,\xi_l) + O(\rho\lambda\log^2\lambda).
				\end{split}
			\end{equation*}
			On the other hand, from similar arguments as above it follows that
			$$\epsilon^2\int_\Omega (e^{U} - e^{-U})\partial_{(\xi_l)_k}U=\epsilon^2 \sum_{j=1}^ma_j\int_\Omega (e^{U} - e^{-U})\partial_{(\xi_l)_k}U_j=O(\rho\lambda\log^2\lambda).$$
			Therefore, by using the choice of $\mu_j$ in \eqref{condmu} the claim follows . This completes the proof.
		\end{proof}
		
		\begin{claim}
			The following expansion does hold
			$$F_{\e,\la}(\xi)=J_{\e,\la}(U)+\theta_{\e,\la}^*(\xi),$$
			where
			%$$
			%|\theta_{\e,\la}^*(\xi)|+\frac{\rho |\grad\theta_{\e,\la}^*(\xi) |}{|\log(\rho\la)|}=O(\rho^2\la^{18}\log^2\lambda|\log(\rho\la)|),
			%$$
			$$
			|\theta_{\e,\la}^*(\xi)|=O(\rho^2\la^{18}\log^2\lambda|\log(\rho\la)|),
			$$
			and
			$$
			|\grad\theta_{\e,\la}^*(\xi) |=O(\rho\la^{18}\log^2\lambda|\log(\rho\la)|^2),
			$$
			as $\rho\la^{19}\to0$, uniformly on points $\xi=(\xi_1,\dots,\xi_m)\in \Om^m$ satisfying the constraints \eqref{setm}.
		\end{claim}
		
		\begin{proof}%[\dem]
			Since we have, $I_{\e,\la}(V)=J_{\e,\la}(U) $ and 
			$$I_{\e,\la}(V(\xi') + \phi(\xi'))
			= J_{\e,\la}(U(\xi) + \ti\phi(\xi)),$$ 
			we write
			\begin{equation*}
				\begin{split}
					J_{\e,\la}(U+\ti\phi)-J_{\e,\la}(U) & = I_{\e,\la}(V+\phi)-I_{\e,\la}(V) := A.
				\end{split}
			\end{equation*}
			Let us estimate $A$ first. Taking into account that $DI_{\e,\la}(V +
			\phi)[\phi]=0$, a Taylor expansion and
			\eqref{nline1} gives us
			\begin{equation}\label{A}
				\begin{split}
					A&= - \int_0^1 D^2I_{\e,\la}(V+t\phi)[\phi]^2\,t\,dt,\\
					&=-\int_0^1\lf( \int_{\Om_\rho} [R + N(\phi)]\, \phi\rg. \\
					&\qquad \qquad\ \ \lf. - \int_{\Om_\rho}
					\rho^2\e^2\lf[ e^{V}( e^{t \phi} -1 ) +e^{-V}(e^{-t\phi} -1)\rg] \phi^2 \rg)\,t\,dt.
				\end{split}
			\end{equation}
			Therefore, we get
			$$
			I_{\e,\la}(V+\phi)-I_{\e,\la}(V)=O(\rho^2\la^{18}\log^2\lambda|\log( \rho\la )|),
			$$
			since $\|R\|_*\le C\rho\la^9\log\lambda$, $\| N(\phi)\|_*\le
			C \|\phi\|_\infty^2$ and $\|\phi\|_\infty\le C\rho\la^9\log\lambda|\log
			(\rho\la)|$. Let us differentiate with respect to $\xi'$. We use
			representation \eqref{A} and differentiate directly under the
			integral sign, thus obtaining, for each $k=1,2$, $l=1,\dots,m$.
			Using Lemma \ref{nlp} and the computations in the proof, we conclude that for $k=1,2$, $l=1,\dots,m$
			$$
			\fr_{\xi_{kl}'}\lf[I_{\e,\la}(V+\phi)-I_{\e,\la}(V)\rg]=O(\rho^2\la^{18}\log^2\lambda|\log(\rho\la)|^2).
			$$
			Now, taking $\ti\theta_{\e,\la}(\xi')=\theta_{\e,\la}^* (\rho \xi')$ with
			$\theta_{\e,\la}^*(\xi)=F_{\e,\la} (\xi)-J_{\e,\la}(U)$, we have shown that
			$$|\ti\theta_{\e,\la} | +
			\frac{1}{|\log(\rho\la)|} |\grad_{\xi'}\ti\theta_{\e,\la} |=O(\rho^2\la^{18}\log^2\lambda|\log(\rho\la)|),
			$$
			as $\rho\lambda^9\log\lambda|\log(\rho\lambda)|^2\to0$. The continuity in $\xi$ of all these expressions is inherited from
			that of $\phi$ and its derivatives in $\xi$ in the $L^\infty$
			norm.
		\end{proof}
		
		Therefore, from \eqref{estlogrola}, previous claims and $\grad\theta_{\e,\la}^*(\xi)=\frac 1\rho \grad\ti\theta_{\e,\la}(\frac{\xi}{\rho})$ we conclude the proof of Proposition \ref{fju}.
	\end{proof}
	
	\section{Proof of main results}\label{secmain}
	
	Recall $\theta_0 \in(0,\infty)$ is the unique minima of $h$ in~\eqref{expanrobin1} in $(0,+\infty)$. We denote 
	$$
	S^* = \{ x \in \Omega : d(x) = \theta_0 \}.
	$$
	
	Concerning $h$, it is easy to see that
	\begin{equation*}
		\begin{split}
			h(\theta)&=4\log\theta +O(1) \quad\text{as }\ \theta\to +\infty, \\
			h(\theta)&=-4\log\theta +O(1) \quad\text{as }\ \theta\to 0^+.
		\end{split}
	\end{equation*}
	
	Furthermore, from \eqref{expanrobin} it follows that there is $C'=C'(K)$ such that
	$$
	\left|H_\lambda(x,x)-h(\lambda d(x)) + 4\log \la\right|\le \frac{C'}{\lambda},
	$$
	for all $x\in\Omega$ satisfying $\frac{K^{-1}}{\lambda}\le\text{dist}(x,\partial\Omega)\le \frac{K}{\lambda}$.

	\subsection{Symmetric case.} Here we follow closely the arguments in~\cite{BPW}. For this, we assume the domain $\Omega$ is such that $\Omega \cap \R \times \{ 0 \} \neq \emptyset$, and that it is symmetric with respect to the reflection at $\R \times \{ 0 \}$. In this setting, we have $G_\lambda$ in~\eqref{Green} is also symmetric with respect to this reflection in the following sense: for $x = (x_1, x_2) \in \Omega$, let $\tilde x = (x_1, -x_2)$, then
	\begin{equation}\label{sym}
		G(x,y) = G(\tilde x, \tilde y) \quad \mbox{for all} \ x \neq y.
	\end{equation}

	In fact, defining $\tilde G_\lambda(x,y) = G_\lambda(\tilde x, \tilde y)$, we have
	\begin{equation*}
		-\Delta_x \tilde G_\lambda(x,y) = -\Delta_x G_\lambda(\tilde x, \tilde y) = 8\pi\delta_{\tilde y}(\tilde x) = 8\pi \delta_{y} (x),
	\end{equation*}
	meanwhile, for each $x \in \partial \Omega$, using that $\nu(\tilde x) = (\nu(x)_1, -\nu(x)_2)$ we have
	\begin{align*}
		\RR_\lambda \tilde G_\lambda(x,y) = \nabla_x G_\lambda(\tilde x, \tilde y) \cdot (\nu(x)_1, -\nu(x)_2) + \lambda G_\lambda(\tilde x, \tilde y)
		= \RR_\lambda G(\tilde x, \tilde y) = 0,
	\end{align*}
	from which, using the uniqueness of the Green function, we conclude the symmetry property. 
	The symmetry condition is inherited by $H_\lambda$ as $H_\lambda(\tilde x) = H_\lambda(x)$. For a $m$-tupe $\xi = (\xi_1, ..., \xi_m)$, we denote $\tilde \xi = (\tilde \xi_1, ..., \tilde \xi_m)$, and 
	$\mu_{\xi, j} = \mu_{\tilde \xi, j}$ for $\mu$ as in~\eqref{condmu}, and
	$\varphi_m(\xi) = \varphi_m(\tilde \xi)$, where $\varphi$ is defined in~\eqref{vphi}.

	We stress the notation by writing $w_{\xi, j}(x), U_{\xi, j}(x)$ as the functions introduced in~\eqref{defwj},~\eqref{Uj} with the particular choice of $\xi$. 
	Then, by related invariance of the equation, it is possible to see that $w_{\tilde \xi, j}(\tilde x) = w_{\xi, j}(x)$, and that $x \mapsto U_{\tilde \xi, j}(\tilde x)$ solves the same equation than $U_{\xi,j}(x)$, from which 
	$
	U_{\xi,j}(x) = U_{\tilde \xi, j}(\tilde x) % \to \quad \nabla_x U_\xi(x) = \partial_1 U_{\tilde \xi}(\tilde x) - \partial_2 U_{\tilde \xi}(\tilde x) = \widetilde{\nabla U_{\tilde \xi}(\tilde x)}
	$ and therefore $R_{\xi}(x) = R_{\tilde \xi}(\tilde x)$. 
	
	Now, if we denote $c(\xi) = (c_{ij}(\xi))_{ij}$ and $(c(\xi), \phi_\xi)$ as the unique solution for~\eqref{nline1} given in Proposition~\ref{nlp},
	%%, we have  
	%\begin{equation*}
	%c_{ij} = \frac{\int_{\Omega_\rho} (R_\xi(x) + N(\phi) + \Lambda(\phi)) \chi(x - \xi_j) Z_{i}(x  - \xi_j)}{\int_{\Omega_\rho} \chi(x -\xi_j) Z_{i}^2(x - \xi_j)},
	%\end{equation*}
	%which is a consequence of {\br the construction of solution and~\eqref{defc}.} 
	we consider the function $\tilde \phi(x) = \phi_{\tilde \xi}(\tilde x)$, it is possible to prove that $\tilde \phi$ satisfies the same equation than $\phi_\xi$, from which they are equal by uniqueness.

	%. A direct computation shows that
	%\begin{equation*}
	%L_\xi \tilde \phi = (R_\xi + N_\xi(\tilde \phi) + \Lambda_\xi(\tilde \phi)) + \sum_{ij} (-1)^{i+1} c_{ij}(\xi) \chi(x - \xi_j) Z_{i}(x - \xi_j) \quad \mbox{in} \ \Omega_\rho. 
	%\end{equation*}
	%
	%
	%...the numbers 
	%$$
	%\tilde c_{ij} = \frac{\int_{\Omega_\rho} (R + N(\tilde \phi) + \Lambda(\tilde \phi)) \chi(x - \xi_j) Z_{i}(x - \xi_j)}{\int_{\Omega_\rho} \chi(x - \xi_j) Z_{i}(x - \xi_j)^2},
	%$$
	%
	%
	%
	%
	%
	%we conclude that $(\tilde c, \tilde \phi)$ solves problem~\eqref{nline1}
	%
	%
	%
	%%we conclude that 
	%
	%
	%
	% we can use similar arguments above to conclude that the function $x \mapsto \phi_{\tilde \xi}(\tilde x)$  satisfies the same problem with 
	%
	%
	
	Then, by definition of $F$ and using the previous symmetry properties, we have
	$
	F(\tilde \xi) = F(\xi)%\int_\Omega |\nabla U_{\tilde \xi}(\tilde x) + \nabla \phi_{\tilde \xi}(\tilde x)|^2dx = \int_\Omega |\widetilde{\nabla U_{\xi}(x) + \nabla \phi_{\xi}(x)}|^2dx = F(\xi), 
	$
	from which we have $\theta(\xi) = \theta(\tilde \xi)$. Thus, we have that $F$ is a $C^1$ and symmetric with respect to $x$ perturbation of the function $\varphi$. This implies that $F$ has critical points whenever the function $\xi \mapsto \varphi((\xi_{11}, 0), (\xi_{12}, 0), \ldots, (\xi_{1m}, 0))$ has stable critical points.

	\medskip
	Now we are ready to provide the 
	\begin{proof}[Proof of Theorem~\ref{simetria}:]
		Assume $\Omega$ is simply connected, contains the origin and it is symmetric with respect to the $x$-axis.
		By symmetry and the expansion of the energy given in Proposition~\ref{fju}, it suffices to prove that there is a (nondegenerate) critical point to the function
		\begin{equation*}
			(t_1, t_2) \mapsto \varphi_2((t_1, 0), (t_2, 0)), \quad t_1, t_2 \in (a, b),  
		\end{equation*}
		where we identify the set $\{ \xi \in \Omega : \xi_2 = 0 \}$ with the interval $(a,b)$. With a slight abuse of notation, we denote this function by $\varphi_2(t_1, t_2)$. We are interested in sign-changing solutions, from which $\varphi_2$ takes the form
		\begin{equation*}
			\varphi_2(t_1, t_2) = H_\lambda((t_1,0), (t_1,0)) + H_\lambda((t_2,0), (t_2,0)) - 2G_\lambda((t_1, 0), (t_2, 0)).
		\end{equation*}
		Using the expansion~\eqref{expanrobin}, we have
		\begin{equation*}
			\varphi_2(t_1, t_2) = -8\log \lambda + h(\la d(t_1,0)) + h(\lambda d(t_2,0)) - 2G_\lambda((t_1, 0), (t_2, 0)) + O(\lambda^{-1}),
		\end{equation*}
		where $O(\lambda^{-1})$ does not depend on $(t_1, t_2)$. 
		
		Fix $\delta =(b - a)/4$ and for $K > 1$ to be determined, consider the set
		$$\Om_0=\{(t_1,t_2) \in (a,b)^2 : \lambda d((t_i, 0)) \in (K^{-1}, K) , i =1,2; \ |t_1-t_2|>\delta \}.$$
		Let us stress that our symmetry assumption implies that $(t_1,t_2)\in \partial\Omega_0$ if and only if $\lambda d((t_i,0))\in \{K^{-1},K\}$ for some $i\in\{1,2\}$. 
		%We adopt the following notation: given $\xi \in \Omega^m$, we denote $\tilde \xi$ such that $\tilde \xi_{1j} = \xi_{1j}$, $\tilde \xi_{2j} = - \xi_{2j}$ for all $j=1,...,m$. Then, 
		%In this section we shall prove the main results. Recall that $d(x)=\text{dist}(x,\fr\Om)$. Assume that $\Om$ is symmetric with respect to axis $x$, $a,K>0$ such that $K^{-1}<\theta_0<K$ and $a=\text{diam}(\Om\cap[\R\times\{0\}])$. Consider the set
		%
		%Notice that $(\xi_1,\xi_2)\in \fr\Om_0$ if and only if $\la d(\xi_i)\in\{K^{-1},K\}$ for some $i=1,2$. 
		First, we choose $(\xi_1^*,\xi_2^*)\in\Om_0$ with $\xi_i^*\in S^*$, namely, $\la d(\xi_i^*)=\theta_0$, for $i=1,2$ so that, by the positivity of the Green's function and \eqref{expanrobin}
		\begin{equation*}
			\begin{split}
				\varphi_2(\xi_1^*,\xi_2^*)&=H_\la(\xi_1^*,\xi_1^*)+H_\la(\xi_2^*,\xi_2^*) - 2G_\la(\xi_1^*,\xi_2^*)\\
				&\le H_\la(\xi_1^*,\xi_1^*)+H_\la(\xi_2^*,\xi_2^*) \le  %2h_\la(\theta_0) + \frac{C'}{ \la}\\&=
				-8\log\la +2h(\theta_0) + \frac{C'}{\la}
			\end{split}
		\end{equation*}
		Let $C_\delta > 0$ such that $|G(x, y)| \leq C_\delta$ for all $x, y \in \Omega$ with $|x - y| \geq \delta$ and $d(x), d(y) \geq (\la \log \la)^{-1}$. If $\la d((t_i, 0))=K^{-1}$ for some $i$ (say $i=1$), we have that $(\xi_1,\xi_2)\in\partial\Omega_0$ and  by~\eqref{Gfxi}, we can write
		\begin{equation*}
			\begin{split}
				\varphi_2(\xi_1,\xi_2)%&=H_\la(\xi_1,\xi_1)+H_\la(\xi_2,\xi_2) - 2G_\la(\xi_1,\xi_2)\\
				&\ge -8 \log \lambda + h(K^{-1}) + h(\theta_0) - C_\delta - \frac{C'}{\la}.
				%\\
				%&\ge h_\la(\theta_0)+h_\la(K)-C(\Om)-\frac{C_2}{\la}\\
				%&\ge -8\log\la +2\alpha(\theta_0)-\alpha(\theta_0) + 4\log K +O(1) -C(\Om)-\frac{C_2}{\la}
			\end{split}
		\end{equation*}
		Now, we fix $K$ large just depending on $h(\theta_0)$ and $C_\delta(\Omega)$ such that
		$$
		h(K^{-1}) > C_\delta + h(\theta_0) + 2,
		$$
		which is valid for all $\la$ large enough such that $(\la \log \la)^{-1} \leq K^{-1} \la^{-1}$. Hence, choosing $\lambda$ larger, if necessary, we also get that $\frac{2C'}{\lambda}<1$.
		The same estimate can be found if $\la d(t_i, 0) = K$. % Therefore, choosing $K>\theta_0$ such that $$
		% -\alpha(\theta_0) + 4\log K +O(1) -C(\Om)-\frac{C'}{\la}\ge \frac{C_1}{\la} +1,
		% $$ 
		From here, we deduce that for any $(\xi_1, \xi_2)\in\partial\Omega_0$
		\begin{equation*}
			\begin{split}
				\varphi_2(\xi_1,\xi_2)&\ge -8\log\la +2h(\theta_0)-h(\theta_0) + h(K) -C_\delta(\Om)-\frac{C'}{\la}\\
				&\ge -8\log\la +2h(\theta_0) + \frac{C'}{\la} +1\\
				&>\varphi_2(\xi_1^*,\xi_2^*).
			\end{split}
		\end{equation*}
		This implies that $\inf_{\partial \Omega_0} \varphi_2 > \varphi_2(\xi_1^*, \xi_2^*)$, from which there is an interior minima of $\varphi_2$ in $\Omega_0$, which is stable under symmetric approximations. Therefore, by using Proposition \ref{fju}, there is an interior minima of $F_{\epsilon,\lambda}$ in $\Omega_0$ for $\epsilon>0$ small enough and $\lambda>0$ large as above. This concludes the proof.
	\end{proof}
	
	\subsection{Not simply connected case.}
	Taking advantage of previous estimates we are now ready to 
	
	\begin{proof}[Proof of Theorem~\ref{holes}:]
		Assume that $\Omega$ is not simply connected with $n$ holes $n\ge1$, so that $\partial \Omega=\cup_{i=1}^{n+1} \Gamma_i$ with $\Gamma_i$'s smooth closed curves satisfying $\Gamma_i\cap\Gamma_j=\varnothing$ for all $i\ne j$. Fix $\delta =\dfrac{a}{4}$, with 
		$$ a=\min\{\text{dist}(\Gamma_i,\Gamma_j)\ :\ i\ne j,\ i,j=1,\dots,n+1\}$$ 
		and for $K > 1$ to be determined, consider the set
		$$\Om_K=\{(\xi_1,\dots,\xi_m)\in\Omega^m\mid \lambda d(\xi_i)\in(K^{-1},K),\ i =1,2; \ |\xi_i-\xi_j|>\delta \}.$$
		Let us stress that our assumption $m\le n+1$ implies that $\xi\in \partial\Omega_K$ if and only if $\lambda d(\xi_i)\in \{K^{-1},K\}$ for some $i\in\{1,\dots,m\}$ with $\xi=(\xi_1,\dots,\xi_m)$. For simplicity we shall assume that
		$$\{1,\dots,m\}=I_1 \cup I_2,\quad \text{and}\quad i\in I_k\iff a_i=(-1)^k,\ \  k=1,2,$$
		so that $|I_k|=m_k$, $k=1,2$, $m_1+m_2=m$ and
		$$\sum_{i=1}^m \sum_{j=1\atop i\ne j} ^m a_i a_j G_\la(\xi_i,\xi_j)=\sum_{i,j\in I_1\atop i\ne j} G_\la(\xi_i,\xi_j) + \sum_{i,j\in I_2\atop i\ne j} G_\la(\xi_i,\xi_j) - 2 \sum_{i\in I_1}\sum_{j \in I_2} G_\la(\xi_i,\xi_j).$$
		In other words, we will find a sign-changing solution $u_{\varepsilon,\lambda}$ to \eqref{eq} having a positive bubble centered at $\xi_i$ with $i\in I_1$ and a negative bubble centered at $\xi_j$ with $j\in I_2$. Recall that for some $C_\delta > 0$ fixed we have that $|G(x, y)| \leq C_\delta$ for all $x, y \in \Omega$ with $|x - y| \geq \delta$ and $d(x), d(y) \geq (\la \log \la)^{-1}$. 
		Now, we choose $(\xi_1^*,\dots,\xi_m^*)\in\Om_K$ with $\xi_i\in S^*$ for all $i$, namely, $\la d(\xi_i^*)=\theta_0$, for all $i=1,\dots,m$ so that, by the positivity of the Green's function we obtain that
		\begin{equation*}
			\begin{split}
				\varphi_m(\xi_1^*,\dots,\xi_m^*)&=\sum_{i=1}^m H_\la(\xi_i^*,\xi_i^*)+ \sum_{i,j\in I_1\atop i\ne j} G_\la(\xi_i,\xi_j) + \sum_{i,j\in I_2\atop i\ne j} G_\la(\xi_i,\xi_j) \\
				&\le m\bigg[ -4\log\la + h(\theta_0) + \frac{C'}{ \la}\bigg] + \big[m_1(m_1-1) + m_2(m_2-1)\big] C_\delta\\
				&=-4m\log\la +mh(\theta_0) + \frac{m C'}{\la} \\
				&\quad + \big[m_1(m_1-1) + m_2(m_2-1)\big] C_\delta.
			\end{split}
		\end{equation*}
		On the other hand, if $\xi\in\partial\Omega$, namely, $\la d(\xi_i)=K^{-1}$ for some $i$ (say $i=1$) by~\eqref{Gfxi}, we can write
		\begin{equation*}
			\begin{split}
				\varphi_m(\xi_1,\dots,\xi_m)&\ge \sum_{i=1}^m H_\la(\xi_i,\xi_i) - 2 \sum_{i\in I_1}\sum_{j \in I_2} G_\la(\xi_i,\xi_j)\\
				&\ge (m-1)\bigg[-4\log\la + h(\theta_0) - \frac{C'}{ \la}\bigg]   -4\log\la + h(K^{-1})\\
				&\quad  - \frac{C'}{ \la} - 2m_1m_2 C_\delta\\
				&\ge -4m \log \lambda + h(K^{-1}) + (m-1) h(\theta_0) -2m_1m_2 C_\delta - \frac{m C'}{\la}.
			\end{split}
		\end{equation*}
		We fix $K$ large just depending on $h(\theta_0)$ and $C_\delta=C(\Omega)$ such that
		$$
		h(K^{-1}) > \big[m_1(m_1-1)+m_2(m_2-1)+2m_1m_2\big]C_\delta + h(\theta_0) + 2,
		$$
		which is valid for all $\la$ large enough such that $(\la \log \la)^{-1} \leq K^{-1} \la^{-1}$. Hence, choosing $\lambda$ larger, if necessary, we get that $\frac{2C'}{\lambda}<1.$
		The same estimate can be found if $\la d(\xi_i) = K$.
		From here, we deduce that for any $(\xi_1,\dots, \xi_m)\in\partial\Omega_K$
		\begin{equation*}
			\begin{split}
				\varphi_m(\xi_1,\dots,\xi_m)&\ge -4m \log \lambda + h(K^{-1}) + (m-1) h(\theta_0) -2m_1m_2 C_\delta - \frac{m C_1}{\la} \\
				&\ge-4m\log\la +mh(\theta_0) + \frac{m C_1}{\la} \\
				&\quad + \big[m_1(m_1-1) + m_2(m_2-1)\big] C_\delta +1\\
				&>\varphi_m(\xi_1^*,\dots,\xi_m^*).
			\end{split}
		\end{equation*}
		This implies that $\inf_{\partial \Omega_K} \varphi_m > \varphi_m(\xi_1^*,\dots, \xi_m^*)$, from which there is a minima of $\varphi_m$ in $\Omega_K$, which is stable under small $C^1$ perturbations. Therefore, by using Proposition \ref{fju}, there is an interior minima of $F_{\epsilon,\lambda}$ in $\Omega_K$ for $\epsilon>0$ small enough and $\lambda>0$ large as above. This concludes the proof.
	\end{proof}
	% %%%%%%%%%%%%%%%%%%%%%%%%%%%%%%%%%%%%%%%%%%%%%%%%%%%%%%%%%%%%%%%%%%%%%%%%%%%%%%%%%%%%%
	
	%\section{Proof of Lemma \ref{cpfc0} }
	
	%%%%%%%%%%%%%%%%%%%%%%%%%%%%%%%%%%%%%%%%%%%%%%%%%%%
	%%%%%%%%%%%%%%%%%%%%%%%%%%%%%%%%%%%%%%%%%%%%%%%%%%%
	
	%\begin{appendices}
	%\section{\hspace{-0.5cm}: Linearized operator}\label{appeA}
	
	%\section{Appendix: Sketch of proof of Proposition \ref{inliop} }
	
	\bigskip
	\noindent {\bf Acknowledgements.} P. F. was partially supported by Fondecyt grant 1201884. L. I. was partially supported by Fondecyt grants 1211766 and 1221365. E. T. was partially supported by Foncecyt grant 1201897.
	
	%%%%%%%%%%%BIBLIOGRAPHY%%%%%%%%%%%%%%%%%%%%%%%%%%%%%%%

\end{document}